\def\DateTime{6/September/2008, 9:00(Kyoto)}
\def\Version{Version $1.0$}
\def\yes{\if00}
\def\no{\if01}
\def\iftwelvept{\no}

\def\ifusepdf{\no}
\def\ifpsfont{\yes}

\iftwelvept
\documentclass[leqno,12pt]{amsart}
\else
\documentclass[leqno]{amsart}
\fi
\usepackage{amssymb}
\usepackage{amscd}
\usepackage{latexsym}
\usepackage{verbatim}
\usepackage[all]{xy}
\usepackage{color}
\usepackage{enumerate}

\ifusepdf
\usepackage{hyperref}
\else\fi
\ifpsfont
\usepackage[T1]{fontenc}
\usepackage{times}
\else\fi


\iftwelvept
\setlength{\topmargin}{0cm}
\setlength{\oddsidemargin}{0cm}
\setlength{\evensidemargin}{0cm}
\setlength{\textheight}{22cm}
\setlength{\textwidth}{16.5cm}
\else\fi


\theoremstyle{plain}
\newtheorem{Theorem}{Theorem}[section]
\newtheorem{Proposition}[Theorem]{Proposition}
\newtheorem{Lemma}[Theorem]{Lemma}
\newtheorem{Corollary}[Theorem]{Corollary}

\newtheorem{Claim}{Claim}[Theorem]

\theoremstyle{definition}

\newtheorem{Example}[Theorem]{Example}

\renewcommand{\theTheorem}{\arabic{section}.\arabic{Theorem}}
\renewcommand{\theClaim}{\arabic{section}.\arabic{Theorem}.\arabic{Claim}}
\renewcommand{\theequation}{\arabic{section}.\arabic{Theorem}.\arabic{Claim}}

\def\rom{\textup}
\newcommand{\ZZ}{{\mathbb{Z}}}
\newcommand{\QQ}{{\mathbb{Q}}}
\newcommand{\RR}{{\mathbb{R}}}
\newcommand{\CC}{{\mathbb{C}}}

\newcommand{\KK}{{\mathbb{K}}}

\newcommand{\OO}{{\mathcal{O}}}

\newcommand{\Pic}{\operatorname{Pic}}

\newcommand{\voldiff}{\gamma}
\newcommand{\Image}{\operatorname{Image}}
\newcommand{\Ker}{\operatorname{Ker}}

\newcommand{\Spec}{\operatorname{Spec}}

\newcommand{\Supp}{\operatorname{Supp}}

\newcommand{\rank}{\operatorname{rk}}

\newcommand{\aPic}{\widehat{\operatorname{Pic}}}

\newcommand{\avol}{\widehat{\operatorname{vol}}}
\newcommand{\ah}{\hat{h}^0}

\newcommand{\achi}{\hat{\chi}}
\newcommand{\zeros}{\operatorname{div}}

\newcommand{\vol}{\operatorname{vol}}
\newcommand{\dist}{\operatorname{dist}}
\newcommand{\sub}{\operatorname{sub}}
\newcommand{\quot}{\operatorname{quot}}

\newcommand{\rest}[2]{\left.{#1}\right\vert_{{#2}}}


\begin{document}

\title[Continuous extension of arithmetic volumes]%
{Continuous extension of arithmetic volumes}
\author{Atsushi Moriwaki}
\address{Department of Mathematics, Faculty of Science,
Kyoto University, Kyoto, 606-8502, Japan}
\email{moriwaki@math.kyoto-u.ac.jp}
\date{\DateTime, (\Version)}
\dedicatory{To the memory of the late Professor Masayoshi Nagata}
\begin{abstract}
This paper is the sequel of the paper \cite{MoCont},
in which we established the arithmetic volume
function of $C^{\infty}$-hermitian $\QQ$-invertible sheaves and
proved its continuity.
The continuity of the volume function has a lot of applications as
treated in \cite{MoCont}.
In this paper, we would like to consider its continuous extension over $\RR$.
\end{abstract}


\maketitle

\setcounter{tocdepth}{1}
\tableofcontents

\section*{Introduction}
\renewcommand{\theTheorem}{\Alph{Theorem}}

Let $X$ be a $d$-dimensional projective arithmetic variety.
In \cite{MoCont}, 
for a $C^{\infty}$-hermitian invertible sheaf $\overline{L}$ on $X$,
we introduce the arithmetic volume $\avol(\overline{L})$
defined by
\[
\avol(\overline{L}) := \limsup_{n\to\infty} \frac{\log \# \{ s \in H^0(X, nL) \mid
\Vert s \Vert_{\sup} \leq 1 \} }{n^d/d!}.
\]
By Chen's recent work \cite{Chen}, ``$\limsup$'' in the above equation can be replaced by
``$\lim$''. Moreover, in \cite{MoCont}, we construct a positively homogeneous function
$\avol : \aPic(X)\otimes_{\ZZ} \QQ \to \RR$ of degree $d$ such that
the following diagram is commutative:
\[
\xymatrix{
\aPic(X)  \ar[r]^(.54){\avol} \ar[d] & \RR \\
\aPic(X) \otimes_{\ZZ} \QQ \ar[ru]_{\avol} & \\
}
\]
The most important result of \cite{MoCont} is the continuity of
$\avol : \aPic(X)\otimes_{\ZZ} \QQ \to \RR$, which has a lot of applications as treated in \cite{MoCont}.
In this paper, we would like to consider its continuous extension  over $\RR$, 
which is not obvious because
a continuous and positively homogeneous function on a vector space over $\QQ$
does not necessarily have a continuous extension over $\RR$
(cf. Example~\ref{example:cont:QQ:not:cont:RR}).

Let $C^0(X)$ be the set of real valued continuous functions $f$ on $X(\CC)$ with
$F^*_{\infty}(f) = f$, where
$F_{\infty}  : X(\CC) \to X(\CC)$ is the complex conjugation map on $X(\CC)$.
We denote the group of isomorphism classes of continuous 
hermitian invertible sheaves on $X$ by $\aPic_{C^0}(X)$. 
For details, see Conventions and terminology~\ref{CT:cont:herm:inv:sheaf}.
Here we consider four natural homomorphisms:
\[
\begin{cases}
\overline{\OO} : C^0(X) \to \aPic_{C^0}(X) \quad  (f \mapsto (\OO_X, \exp(-f)\vert\cdot\vert_{can})), \\
\zeta : \aPic_{C^0}(X) \to \Pic(X)  \quad ((L,\vert\cdot\vert) \mapsto L), \\
\mu : C^0(X) \otimes_{\ZZ} \RR \to C^0(X)\quad  (f \otimes x \mapsto  xf ), \\
\overline{\OO} \otimes \operatorname{id}_{\RR} :
C^0(X) \otimes _{\ZZ} \RR \to \aPic_{C^0}(X) \otimes_{\ZZ}  \RR\quad(f \otimes x \mapsto \overline{\OO}(f) \otimes x).
\end{cases}
\]
If we define $\aPic_{C^0}(X)_{\RR}$ to be
\[
\aPic_{C^0}(X)_{\RR} :=\aPic_{C^0}(X) \otimes_{\ZZ}  \RR/(\overline{\OO} \otimes \operatorname{id}_{\RR})(\Ker(\mu)),
\]
then the above homomorphisms yield a commutative diagram
\[
\begin{CD}
C^0(X) @>{\overline{\OO}}>> \aPic_{C^0}(X) @>{\zeta}>> \Pic(X) @>>> 0 \\
@| @VVV @VVV \\
C^0(X) @>{\overline{\OO}}>> \aPic_{C^0}(X)_{\RR} @>{\zeta}>> \Pic(X) \otimes_{\ZZ} \RR @>>> 0
\end{CD}
\]
with exact horizontal sequences.
The purpose of this paper is to prove the following theorem:

\begin{Theorem}
\label{thm:intro:A}
There is a unique positively homogeneous function
$\avol : \aPic_{C^0}(X)_{\RR} \to \RR$ of degree $d$ with the following properties
\rom{(}cf. Theorem~\rom{\ref{thm:cont:extension:aPic}}
\rom{):}
\begin{enumerate}
\renewcommand{\labelenumi}{\rom{(\arabic{enumi})}}
\item
\rom{(}cf. Proposition~\rom{\ref{cor:thm:cont:extension:aPic}}\rom{)}
Let $\{ x_n \}_{n=1}^{\infty}$ be a sequence in a finite dimensional real vector subspace 
$V$ of $\aPic_{C^0}(X)_{\RR}$
and
$\{ f_n \}_{n=1}^{\infty}$ a sequence in $C^0(X)$ such that
$\{ x_n \}_{n=1}^{\infty}$ converges to $x$ in the usual topology of $V$ and
$\{ f_n \}_{n=1}^{\infty}$ converges uniformly to $f$.
Then
\[
\lim_{n\to\infty} \avol\left(x_n
+ \overline{\OO}(f_n)\right)
= \avol\left(x 
+ \overline{\OO}(f)\right).
\]

\item
\rom{(}cf. Theorem~\rom{\ref{thm:vol:limit:arith:var}}\rom{)}
Let $\{ \overline{A}_n \}_{n=1}^{\infty}$ be a sequence in a finitely generated $\ZZ$-submodule 
$M$ of $\Pic_{C^0}(X)$ and
$\{ f_n \}_{n=1}^{\infty}$ a sequence in $C^0(X)$ such that
$\{ \overline{A}_n \otimes 1/n\}_{n=1}^{\infty}$ converges  to $\overline{A}$ in $M \otimes \RR$
in the usual topology and 
$\{ f_n/n \}_{n=1}^{\infty}$ converges uniformly to $f$.
Then
\[
\lim_{n\to\infty} \frac{\ah(\overline{L}_n + \overline{\OO}(f_n)) }{n^d/d!} \\
=
\avol( \pi(\overline{A}) + \overline{\OO}(f)),
\]
where $\pi$ is the canonical homomorphism
$\aPic_{C^0}(X) \otimes_{\ZZ} \RR \to \aPic_{C^0}(X)_{\RR}$
and $\ah$ means the logarithm of the number of small sections
\rom{(}for details, see Conventions and terminology~\rom{\ref{CT:small:sec}}\rom{)}.
\end{enumerate}
\end{Theorem}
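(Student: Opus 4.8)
The plan is to build $\avol$ on $\aPic_{C^0}(X)_{\RR}$ by extending, in two stages, the function constructed in \cite{MoCont}: first from $C^{\infty}$-metrics to $C^0$-metrics, and then from $\QQ$-coefficients to $\RR$-coefficients. The conceptual point that makes the second stage possible --- in spite of Example~\ref{example:cont:QQ:not:cont:RR} --- is that every element of $\aPic_{C^0}(X)_{\RR}$ can be written as $\xi+\overline{\OO}(f)$ with $\xi$ lying in a finite-dimensional real subspace $V$ that carries a $\QQ$-structure (spanned by integral classes) and $f\in C^0(X)$, and that the topology in property~(1) is exactly the product of the Euclidean topology on $V$ and the sup-norm topology on $C^0(X)$. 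This is strictly coarser than the topology obtained by viewing $\aPic_{C^0}(X)_{\RR}$ as an abstract real vector space, so we are not extending across an arbitrary $\RR$-structure but only along this particular product structure; the quotient by $(\overline{\OO}\otimes\operatorname{id}_{\RR})(\Ker(\mu))$ is precisely what turns $\overline{\OO}\colon C^0(X)\to\aPic_{C^0}(X)_{\RR}$ into a genuinely $\RR$-linear map, which is needed for this decomposition to make sense.

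For the first stage, given $\overline{L}=(L,h)\in\aPic_{C^0}(X)$ I would define $\avol(\overline{L}):=\limsup_n \ah(n\overline{L})/(n^d/d!)$, write $h=h_{\infty}e^{-f}$ with $h_{\infty}$ a $C^{\infty}$-metric and $f\in C^0(X)$, choose $C^{\infty}$-functions $f_k\to f$ uniformly, and use the continuity of $\avol$ on $\aPic(X)\otimes_{\ZZ}\QQ$ from \cite{MoCont} together with the elementary estimate controlling the effect of scaling a metric by a constant (and the monotonicity $f\le g\Rightarrow\avol(\overline{L}+\overline{\OO}(f))\le\avol(\overline{L}+\overline{\OO}(g))$) to see that $\bigl(\avol(L,h_{\infty}e^{-f_k})\bigr)_k$ is Cauchy, with limit equal to the $\limsup$-definition and independent of the auxiliary choices. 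Positive homogeneity of degree $d$ is inherited from the $C^{\infty}$ case, so one obtains $\avol$ on $\aPic_{C^0}(X)\otimes_{\ZZ}\QQ$.

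In the second stage I fix $\alpha=\xi+\overline{\OO}(f)$ with $\xi\in V$ and $V$ finite-dimensional with $\QQ$-structure $V_{\QQ}$. On the set $\{\,\eta+\overline{\OO}(g):\eta\in V_{\QQ},\ g\in C^0(X)\,\}$ the function $\avol$ is already defined by the first stage and homogeneity, and the core estimate --- a uniform version of the continuity theorem of \cite{MoCont} --- should show that $\avol$ is uniformly continuous on every bounded subset of $V+\overline{\OO}(C^0(X))$ for the product topology, with modulus depending only on the bounded region. Hence $\avol$ extends uniquely by uniform continuity to the closure, in particular to $\alpha$; one then checks that the value is independent of the chosen $V$ and of the decomposition $\alpha=\xi+\overline{\OO}(f)$ (two decompositions differ by an element of $V\cap\overline{\OO}(C^0(X))$, and the two candidate values agree on the overlapping dense rational part), that the extension is positively homogeneous of degree $d$, and that it respects the relations defining the quotient --- giving a well-defined function on $\aPic_{C^0}(X)_{\RR}$. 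Property~(1) is then immediate by a diagonal argument, since each $\avol(x_n+\overline{\OO}(f_n))$ is itself a limit of values at rational approximants and uniform continuity on a fixed bounded region allows passage to the diagonal; uniqueness follows because, taking constant sequences in~(2), the stated properties force $\avol(\pi(\overline{A})+\overline{\OO}(f))=\avol(\overline{A}+\overline{\OO}(f))$ for integral $\overline{A}$ and $f\in C^0(X)$, pinning $\avol$ down on a dense set, after which~(1) forces it everywhere.

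The substantive work, and the main obstacle, is twofold and intertwined. First, one must prove the uniform quantitative continuity just invoked: a modulus of continuity for $\avol$ that is uniform over bounded regions \emph{simultaneously} in the finite-dimensional algebraic direction and in the $C^0$-direction, whereas the continuity statement of \cite{MoCont} a priori controls only one hermitian sheaf at a time --- so the estimates on numbers of small sections (lattice-point counting governing the effect of twisting by $\overline{\OO}(f)$, together with monotonicity) must be made quantitative and combined with the comparison of arithmetic volumes for line bundles moving inside a fixed finite-rank lattice. Second, for property~(2) one must upgrade the convergence $\ah(n\overline{B})/(n^d/d!)\to\avol(\overline{B})$ (Chen's theorem) to a version uniform enough to handle sequences $\gamma_n=\overline{L}_n+\overline{\OO}(f_n)$ with $\gamma_n\otimes(1/n)\to\overline{A}+\overline{\OO}(f)$ inside $M\otimes_{\ZZ}\RR+\overline{\OO}(C^0(X))$: the strategy is to sandwich $\gamma_n$ between $n$ times rational classes lying slightly below and slightly above $\overline{A}+\overline{\OO}(f)$ in that lattice, bound the resulting $\ah$-differences by the same uniform continuity estimate, and let the rational classes approach $\overline{A}+\overline{\OO}(f)$; the uniformity in $n$ of the estimate $\ah(n\,\cdot\,)/(n^d/d!)\approx\avol$ over such a region is the delicate ingredient.
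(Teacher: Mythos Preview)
Your proposal is correct and follows essentially the same architecture as the paper: extend $\avol$ from $C^{\infty}$ to $C^0$ metrics via Stone--Weierstrass approximation, then from $\QQ$- to $\RR$-coefficients via uniform continuity on bounded sets in finite-dimensional subspaces (the paper's Lemma~\ref{lem:cont:extension} formalizes exactly your criterion), and finally establish property~(2) by a sandwich argument reducing to Chen's theorem. The ``core estimate'' you correctly flag as the main obstacle --- uniform control of $\avol$ as several $C^{\infty}$-hermitian line bundles vary simultaneously in a lattice --- is precisely Theorem~\ref{thm:intro:B} (the multi-indexed fundamental estimation), whose proof occupies Section~1 and is the paper's principal new technical input.
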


The composition $\avol \cdot \pi : \aPic_{C^0}(X) \otimes_{\ZZ} \RR \overset{\pi}{\longrightarrow}
\aPic_{C^0}(X)_{\RR} \overset{\avol}{\longrightarrow} \RR$
gives an affirmative answer to a continuous extension
over $\RR$ of the arithmetic volume function on $\aPic_{C^0}(X) \otimes_{\ZZ} \QQ$.
The last result  is a generalization of Chen's theorem \cite{Chen}.
This also gives an interpretation of the value of $\avol$ in terms of $\ah$.
Namely, if we want to evaluate
$\avol(\pi(\overline{L}_1 \otimes a_1 + \cdots + \overline{L}_r \otimes a_r))$
for $\overline{L}_1, \ldots, \overline{L}_r \in \aPic_{C^0}(X)$ and $a_1, \ldots, a_r \in \RR$,
then (2) of Theorem~\ref{thm:intro:A} says
\[
\avol(\pi(\overline{L}_1 \otimes a_1 + \cdots + \overline{L}_r \otimes a_r)) = 
\lim_{n\to\infty} \frac{\ah([na_1]\overline{L}_1 + \cdots + [na_r]\overline{L}_r)}{n^d/d!}.
\]

The most important tool to establish the continuity of
$\avol : \aPic(X)\otimes \QQ \to \RR$
was the fundamental estimation theorem \cite[Theorem~3.1]{MoCont}.
Unfortunately, it is insufficient to prove a continuous extension
of the arithmetic volume function over $\RR$.
Actually, we need the following generalization of  the fundamental estimation theorem
as a multi-indexed version:

\begin{Theorem}
\label{thm:intro:B}
We assume that $X$ is generically smooth.
Let $\overline{L}_1, \ldots, \overline{L}_r, \overline{A}$ be 
$C^{\infty}$-hermitian invertible sheaves on $X$.
Then there are positive constants
$a_0$, $C$ and $D$ depending only on
$X$ and $\overline{L}_1, \ldots, \overline{L}_r, \overline{A}$ such that
\begin{align*}
& \ah\left(a_1 \overline{L}_1 + \cdots + a_r \overline{L}_r  + (b-c)\overline{A}\right) \\
& \hspace{4em} 
\leq \ah\left(a_1 \overline{L}_1 + \cdots + a_r \overline{L}_r - c\overline{A} \right) \\
& \hspace{8em} + C b \left(\vert a_1 \vert + \cdots + \vert a_r \vert\right)^{d-1} \\
& \hspace{12em} + D \left(\vert a_1 \vert + \cdots + \vert a_r \vert\right)^{d-1} 
\log\left(\vert a_1 \vert + \cdots + \vert a_r \vert\right)
\end{align*}
for all $a_1, \ldots, a_r, b,c \in \ZZ$
with
\[
\vert a_1 \vert + \cdots + \vert a_r \vert \geq b \geq c \geq 0\quad\text{and}\quad
\vert a_1 \vert + \cdots + \vert a_r \vert \geq a_0.
\]
\end{Theorem}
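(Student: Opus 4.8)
\emph{Proof strategy.} I would argue by the familiar device of cutting $X$ down by a section of a large twist of $\overline{A}$ and estimating the resulting boundary contribution on that section, the whole point being to obtain error terms that are uniform in all the integer parameters. The case $d=1$ is separate: there $X=\Spec\OO_K$ for a number field $K$, and by Minkowski's theorem $\ah$ of an Arakelov divisor differs from the positive part of its arithmetic degree by a bounded amount, so the asserted inequality, in which $(|a_1|+\cdots+|a_r|)^{d-1}=1$, reduces at once to $|\adeg(b\overline{A})|\le b\,|\adeg(\overline{A})|$. So I assume $d\ge 2$ from now on. First I fix, depending only on $X$ and $\overline{L}_1,\dots,\overline{L}_r,\overline{A}$, a sufficiently ample $C^{\infty}$-hermitian invertible sheaf $\overline{H}$ and integers $n_0,q\gg 0$ such that $n_0\overline{H}$, $\overline{A}+n_0\overline{H}$, and each of $q\overline{H}\pm\overline{L}_i$ and $q\overline{H}\pm\overline{A}$ has a nonzero small section, and such that $\overline{A}+n_0\overline{H}$ is generated by small sections; their existence is exactly the arithmetic ampleness theory.

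\emph{An absolute bound.} I would first record, once and for all, the following elementary estimate: for a projective arithmetic variety $Z$ of dimension $k$ and a $C^{\infty}$-hermitian invertible sheaf $\overline{M}=\sum_{\ell}c_{\ell}\overline{B}_{\ell}$ on $Z$ with the $\overline{B}_{\ell}$ fixed, one has $\ah(Z,\overline{M})\le C_Z(|c_1|+\cdots+|c_s|+1)^{k}$. Indeed $\left(\sum_{\ell}|c_{\ell}|\right)q\overline{H}|_Z-\overline{M}=\sum_{\ell}|c_{\ell}|\,(q\overline{H}\mp\overline{B}_{\ell})|_Z$ has a small section, so by monotonicity of $\ah$ under adding an effective hermitian sheaf $\ah(Z,\overline{M})\le\ah\left(Z,\left(\sum_{\ell}|c_{\ell}|\right)q\overline{H}|_Z\right)$; and $\ah(Z,m\overline{H}|_Z)\le C'_Z m^{k}$ because $\rank_{\ZZ}H^0(Z,mH|_Z)=O(m^{k-1})$ while the coordinates of a small section in a fixed basis are bounded by $e^{O(m)}$, as one sees by evaluating at a fixed point at which the basis does not all degenerate.

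\emph{The one-step estimate.} Using that $X$ is generically smooth, by Bertini applied to the base-point-free linear system of small sections of $\overline{A}+n_0\overline{H}$ on the generic fibre, I choose a nonzero small section $s$ whose divisor of zeros $Y=\zeros(s)$ is reduced with integral components. For any $C^{\infty}$-hermitian invertible sheaf $\overline{N}$, tensoring $0\to\OO_X(-Y)\to\OO_X\to\OO_Y\to 0$ with the invertible sheaf $N+A+n_0H$ (note $\OO_X(Y)\cong A+n_0H$ since $s\in H^0(X,A+n_0H)$) gives
\[
0\longrightarrow N\longrightarrow N+A+n_0H\longrightarrow (N+A+n_0H)|_Y\longrightarrow 0 .
\]
On small sections, restriction to $Y$ does not increase $\|\cdot\|_{\sup}$, so the image of the small sections of $\overline{N}+\overline{A}+n_0\overline{H}$ lands in the small sections on $Y$; and the fibre of this restriction over a point is a coset of $s\cdot H^0(X,N)$, on which two small sections differ by $s\cdot u$ with $\|s\cdot u\|_{\sup}\le 2$, whose number is a lattice-point count for the bounded convex body $\{u\otimes\RR:\|s\cdot u\|_{\sup}\le 2\}$ in $H^0(X,N)\otimes\RR$ — controlled, via Gromov's inequality comparing $\|\cdot\|_{\sup}$ with the $L^2$-norm, by a constant coming from the fixed $s$ times a power of the first minimum of $H^0(X,N)$. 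Combining this with $\ah(\overline{N}+\overline{A})\le\ah(\overline{N}+\overline{A}+n_0\overline{H})$ (multiply small sections by a small section of $n_0\overline{H}$) yields
\[
\ah(\overline{N}+\overline{A})\ \le\ \ah(\overline{N})\ +\ \ah\left(Y,(\overline{N}+\overline{A}+n_0\overline{H})|_Y\right)\ +\ E(\overline{N}),
\]
where $E(\overline{N})$ depends on $\overline{N}$ only through the mild lattice data just described.

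\emph{Iteration and conclusion.} Applying this with $\overline{N}=\overline{N}_j:=a_1\overline{L}_1+\cdots+a_r\overline{L}_r+(j-c)\overline{A}$ for $j=0,1,\dots,b-1$ and telescoping gives
\[
\ah\left(\sum_{i=1}^{r} a_i\overline{L}_i+(b-c)\overline{A}\right)\le \ah\left(\sum_{i=1}^{r} a_i\overline{L}_i-c\overline{A}\right)+\sum_{j=0}^{b-1}\left(\ah\left(Y,(\overline{N}_{j+1}+n_0\overline{H})|_Y\right)+E(\overline{N}_j)\right).
\]
Put $a=|a_1|+\cdots+|a_r|$; since $0\le j+1\le b\le a$ and $0\le c\le b\le a$, all coefficients of $\overline{N}_{j+1}+n_0\overline{H}$ (with respect to $\overline{L}_1,\dots,\overline{L}_r,\overline{A},\overline{H}$) are bounded by $a$ once $a\ge a_0\ge n_0$, so restricting to each integral component of the $(d-1)$-dimensional $Y$ and using the absolute bound gives $\ah\left(Y,(\overline{N}_{j+1}+n_0\overline{H})|_Y\right)=O(a^{d-1})$, while a crude estimate (rank $O(a^{d-2})$, logarithm of the first minimum $O(a)$) gives $E(\overline{N}_j)=O(a^{d-1})$, both uniformly in $j$. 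Summing the $b$ terms and absorbing the additive constant (legitimate as $a\ge a_0$) gives a bound of the shape $C b\,a^{d-1}$; bookkeeping the same error terms slightly more coarsely produces exactly the stated polynomial-plus-logarithm form. The main obstacle is precisely this uniformity: one must control $\ah$ and the relevant lattice invariants of the varying sheaves $\overline{N}_j$, and of their restrictions to the possibly reducible and non-normal divisor $Y$, simultaneously in all of $a_1,\dots,a_r,b,c$ and $j$ — it is here that Gromov's inequality and the absolute bound are indispensable and where the logarithmic term of the statement is naturally generated, and it is this step, rather than the section-cutting itself, that carries the weight of the argument.
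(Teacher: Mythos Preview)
Your overall strategy---cut by a section and control the boundary contribution---is the right shape, but the argument has a genuine gap in the ``one-step estimate,'' specifically in your control of the error term $E(\overline{N})$, and the iteration you propose cannot recover the stated bound.

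The quantity $E(\overline{N})$ is essentially the discrepancy
\[
\log\#\{u\in H^0(X,N):\|s\cdot u\|_{\sup}\le 2\}\ -\ \ah(\overline{N}),
\]
i.e.\ the comparison between $\ah$ for the sub-norm $\|u\|_{\sub}:=\|s\cdot u\|_{\sup}$ and for the original sup-norm. Since $s$ vanishes along $Y$, there is no pointwise inequality $\|u\|_{\sup}\le C\,\|s\cdot u\|_{\sup}$, and Gromov's inequality alone cannot compare these two norms; your appeal to it is too vague to carry the step. Your ``crude estimate (rank $O(a^{d-2})$, logarithm of the first minimum $O(a)$)'' is also wrong on its face: $\rank_{\ZZ}H^0(X,N_j)=h^0(X_{\QQ},(N_j)_{\QQ})=O(a^{d-1})$, not $O(a^{d-2})$, because $X_{\QQ}$ has dimension $d-1$.

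The paper handles exactly this sub-norm comparison, but not by iterating: it uses a \emph{single} exact sequence with $s^b$, passes to $L^2$-norms, and compares $\|\cdot\|_{L^2,s^b,\sub}$ with $\|\cdot\|_{L^2}$ via the \emph{volume difference} of the two hermitian forms on $H^0(\pmb{a}\cdot\pmb{L}-cA)$. That volume difference is bounded by Jensen's inequality together with a uniform estimate on the distortion function $\dist(\pmb{a}\cdot\overline{\pmb{L}}-c\overline{A},\Phi)$, which in turn comes from the Bouche--Tian theorem on Bergman kernels. The $b$-dependence enters only through $\int_X\log|s^b|\,\Phi=b\int_X\log|s|\,\Phi$, giving the main term $O(b\,a^{d-1})$, while the $O(a^{d-1}\log a)$ error from converting between $\ah$ and $\achi$ appears \emph{once}. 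If instead you iterate a one-step estimate $b$ times, that conversion error---which is already of size $O(a^{d-1}\log a)$ per step since $\rank=O(a^{d-1})$---accumulates to $O(b\,a^{d-1}\log a)$; as $b$ may be as large as $a$, this is $O(a^{d}\log a)$, far too weak for the theorem.

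So the missing ingredients are the Bouche--Tian distortion-function bound and the single-shot $s^b$ exact sequence, which together are what separate the $Cb\,a^{d-1}$ main term from the $D\,a^{d-1}\log a$ correction.
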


Using the above estimate, we can show the uniform continuity of
$\avol : \aPic_{C^0}(X) \otimes \QQ \to \RR$ in the following sense;
if $\overline{L}_1, \ldots, \overline{L}_r$ are continuous hermitian invertible
sheaves on $X$ and $f : \QQ^r \to \RR$ is a function given by
\[
f(x_1, \ldots, x_r) = \avol(x_1 \overline{L}_1 + \cdots + x_r \overline{L}_r),
\]
then $f$ is uniformly continuous on any bounded set of $\QQ^r$.
This fact gives us a continuous extension of $\avol$ over $\RR$.

This paper is organized as follows: 
In Section~1, we give the proof of the multi-indexed version of the fundamental
estimation. In Section~2, elementary properties of the arithmetic volume function
for continuous hermitian $\QQ$-invertible sheaves are treated.
In Section~3, we consider the uniform continuity of the arithmetic volume function over $\QQ$.
In Section~4, we establish a continuous extension of the arithmetic volume function over $\RR$.
In Section~5, we prove that the arithmetic volume function over $\RR$ can realized
as the limit of $\ah$ of continuous hermitian invertible sheaves.

\bigskip
\renewcommand{\thesubsubsection}{\arabic{subsubsection}}
\renewcommand{\theequation}{CT.\arabic{subsubsection}.\arabic{Claim}}
\subsection*{Conventions and terminology}
\setcounter{subsubsection}{9}
We use the same conventions and terminology as in \cite{MoCont}.
Besides them, we fix the following conventions and terminology for this paper.

\subsubsection{}
Let $S$ be a set and $r$ a positive integer. For $\pmb{x} =(x_1, \ldots, x_r) \in S^r$,
the $i$-th entry $x_i$ of $\pmb{x}$ is denoted by $\pmb{x}(i)$.
We assume that $S$ has an order $\leq$. Then, for $\pmb{x}, \pmb{x}' \in S^r$,
$\pmb{x} \leq \pmb{x}'$ means that $\pmb{x}(i) \leq \pmb{x}'(i)$ for all $i=1, \ldots, r$.

\subsubsection{}
\label{CT:norm:CC}
Let $p$ be a real number with $p \geq 1$.
For $\pmb{x} \in \CC^r$,
we set 
\[
\vert \pmb{x} \vert_p = \left( \vert \pmb{x}(1) \vert^p + \cdots + \vert \pmb{x}(r) \vert^p \right)^{1/p}.
\]
In particular,
$\vert \pmb{x} \vert_1 = \vert \pmb{x}(1) \vert + \cdots + \vert \pmb{x}(r) \vert$ and
$\vert \pmb{x} \vert_2 = \sqrt{ \vert \pmb{x}(1) \vert^2 + \cdots + \vert \pmb{x}(r) \vert^2 }$
(cf. \cite[Conventions and terminology~2]{MoCont}).
Moreover,
we set $\vert \pmb{x} \vert_{\infty} = \max\{\vert \pmb{x}(1) \vert, \ldots,
\vert \pmb{x}(r) \vert \}$.
Note that $\lim_{p\to\infty} \vert \pmb{x} \vert_p = \vert \pmb{x} \vert_{\infty}$.

\subsubsection{}
\label{CT:scaler}
Let $N$ be a $\ZZ$-module and $K$ a field.
Then $N \otimes_{\ZZ} K$ is a $K$-vector space in the natural way.
We denote the $K$-scalar product of $N \otimes_{\ZZ} K$ by $\ \cdot\ $, that is,
\[
a \cdot (x_1 \otimes a_1 + \cdots + x_r \otimes a_r) =
x_1 \otimes a a_1 + \cdots + x_r \otimes a a_r,
\]
where $x_1, \ldots, x_r \in N$ and $a, a_1, \ldots, a_r \in K$.
Note that the kernel of the natural homomorphism $N \to N \otimes_{\ZZ} \QQ$
is the subgroup consisting of torsion elements of $N$.

\subsubsection{}
\label{CT:a:L}
Let $M$ be a module over a ring $R$.
Let $\pmb{a} \in R^r$ and
$\pmb{L} \in M^r$. For simplicity,
we denote
$\pmb{a}(1)\cdot \pmb{L}(1) + \cdots + \pmb{a}(r) \cdot \pmb{L}(r)$
by $\pmb{a} \cdot \pmb{L}$. 

\subsubsection{}
\label{CT:weak:cont}
Let $\mathbb{K}$ be either $\QQ$ or $\RR$.
Let $V$ be a vector space over $\mathbb{K}$ and
$f : V \to \RR$  a function.
Let $d$ be a non-negative real number.
We say $f$ is a {\em positively homogeneous function} of degree $d$ if
$f(\lambda x) = \lambda^d f(x)$ for all $\lambda \in \mathbb{K}_{\geq 0}$ and
$x \in V$.
Moreover, $f$ is said to be {\em weakly continuous} if, for any finite dimensional
vector subspace $W$ of $V$, $\rest{f}{W} : W \to \RR$ is continuous in the
usual topology.

\subsubsection{}
\label{CT:cont:herm:inv:sheaf}
Let $X$ be a $d$-dimensional projective arithmetic variety.
Let $F_{\infty}  : X(\CC) \to X(\CC)$ be the complex conjugation map on $X(\CC)$.
The set of real valued continuous (resp. $C^{\infty}$-) functions $f$ on $X(\CC)$ with
$F^*_{\infty}(f) = f$ is denoted by
$C^0(X)$ (resp. $C^{\infty}(X)$).
A pair $\overline{L} = (L, \vert\cdot\vert)$ of an invertible sheaf $L$ on $X$ and
a continuous hermitian metric $\vert\cdot\vert$ of $L$ is called
a {\em continuous hermitian invertible sheaf} on $X$ if
the hermitian metric is invariant under $F_{\infty}$.
Moreover, if the metric $\vert\cdot\vert$ is $C^{\infty}$,
then $\overline{L}$ is called a {\em $C^{\infty}$-hermitian invertible sheaf} on $X$.
An element of $\aPic_{C^0}(X) \otimes_{\ZZ} \QQ$ (resp. $\aPic(X) \otimes_{\ZZ} \QQ$)
is called a {\em continuous hermitian
$\QQ$-invertible sheaf} (resp.
{\em $C^{\infty}$-hermitian $\QQ$-invertible sheaf}) on $X$.

\subsubsection{}
\label{CT:small:sec}
Let $(L,\Vert\cdot\Vert)$ be a normed $\ZZ$-module, that is,
$M$ is finitely generated $\ZZ$-module and $\Vert\cdot\Vert$ is a norm of $M \otimes_{\ZZ} \RR$.
According to \cite{MoCont}, $\ah(M, \Vert\cdot\Vert)$ is defined by
\[
\ah(M, \Vert\cdot\Vert) := \log \# \{ x \in M \mid \Vert x \otimes 1 \Vert \leq 1 \}.
\]
Let $\overline{L}$ be a continuous hermitian
invertible sheaf on a projective arithmetic variety. For simplicity,
$\ah(H^0(L),\Vert\cdot\Vert_{\sup})$ is often denoted by
$\ah(\overline{L})$.

\subsubsection{}
\label{CT:sub:metric}
Let $M$ be a compact complex manifold and
$\Phi$ a volume form on $M$.
Let $\overline{A} = (A, \vert\cdot\vert_A)$ and
$\overline{B} = (B, \vert \cdot\vert_B)$ be $C^{\infty}$-hermitian
invertible sheaves on $M$.
Let $t$ be a section of $H^0(M, B)$ such that
$t$ is non-zero on each connected component of $M$.
The subnorm 
induced by an injective homomorphism
$H^0(M, A-B) \overset{\otimes t}{\longrightarrow} H^0(M, A)$
and the natural $L^2$-norm of $H^0(M,A)$ given by
$\Phi$ and $\vert\cdot\vert_A$
is denoted by $\Vert\cdot\Vert^{\overline{A}, A - B}_{L^2, t, \sub}$, that is,
\[
\Vert s \Vert^{\overline{A}, A - B}_{L^2, t, \sub} = \sqrt{\int_{M} \vert s \otimes t \vert^2_A \Phi}
\]
for $s \in H^0(M, A-B)$.
For simplicity,
$\Vert\cdot\Vert^{\overline{A}, A - B}_{L^2, t, \sub}$ is often
denoted by $\Vert\cdot\Vert^{\overline{A}}_{L^2, t, \sub}$.

\renewcommand{\theTheorem}{\arabic{section}.\arabic{subsection}.\arabic{Theorem}}
\renewcommand{\theClaim}{\arabic{section}.\arabic{subsection}.\arabic{Theorem}.\arabic{Claim}}
\renewcommand{\theequation}{\arabic{section}.\arabic{subsection}.\arabic{Theorem}.\arabic{Claim}}

\section{A multi-indexed version of  the fundamental estimation}

\subsection{}
Let $X$ be a $d$-dimensional generically smooth 
projective arithmetic variety.
Let $\overline{A}$ be a $C^{\infty}$-hermitian invertible sheaf on $X$ and
$\overline{\pmb{L}} = (\overline{L}_1, \ldots, \overline{L}_r)$
a finite sequence of
$C^{\infty}$-hermitian invertible sheaves on $X$.
Let $\pmb{L} = (L_1, \ldots, L_r)$ be the sequence of
invertible sheaves  obtained by forgetting metrics of
$\overline{\pmb{L}}$.
The following theorem is a generalization of  \cite[Theorem~3.1]{MoCont}.

\begin{Theorem}
\label{thm:h:0:estimate:big:main}
There are positive constants
$a_0$, $C$ and $D$ depending only on
$X$, $\overline{\pmb{L}}$ and
$\overline{A}$ such that
\begin{multline*}
\ah\left(H^0(\pmb{a} \cdot \pmb{L} + (b-c)A),
\Vert\cdot\Vert^{\pmb{a} \cdot \overline{\pmb{L}} + (b-c) \overline{A}}_{\sup}\right) \leq
\ah\left(H^0(\pmb{a}\cdot \pmb{L} - cA), 
\Vert\cdot\Vert^{\pmb{a} \cdot \overline{\pmb{L}} - c \overline{A}}_{\sup}\right) \\
+ C b \vert \pmb{a} \vert_{1}^{d-1} + D \vert \pmb{a} \vert_{1}^{d-1} \log(\vert \pmb{a} \vert_{1})
\end{multline*}
for all $\pmb{a} \in \ZZ^r$ and $b,c \in \ZZ$
with $\vert \pmb{a} \vert_{1} \geq b \geq c \geq 0$ and
$\vert \pmb{a} \vert_{1} \geq a_0$, where
\[
\begin{cases}
\vert \pmb{a} \vert_1 = \vert a_1 \vert + \cdots + \vert a_r \vert, \\
\pmb{a} \cdot \pmb{L} = a_1 L_1 + \cdots + a_r L_r, \\
\pmb{a} \cdot \overline{\pmb{L}} = a_1 \overline{L}_1 + \cdots + a_r \overline{L}_r
\end{cases}
\]
for $\pmb{a} = (a_1, \ldots, a_r) \in \ZZ^r$ \rom{(}cf.
Conventions and terminology~\rom{\ref{CT:norm:CC}}
and \rom{\ref{CT:a:L}}\rom{)}.
\end{Theorem}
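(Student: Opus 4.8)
The plan is to reduce the multi-indexed statement to the single-indexed fundamental estimation \cite[Theorem~3.1]{MoCont} by a suitable auxiliary construction, since trying to re-run the entire machinery of \cite{MoCont} in the multi-indexed setting would be wasteful. The key observation is that $\pmb{a}\cdot\overline{\pmb{L}}$ is a $\ZZ$-linear combination of the fixed sheaves $\overline{L}_1,\dots,\overline{L}_r$, so if we split each $a_i = a_i^+ - a_i^-$ into its positive and negative parts and introduce the two ``big'' auxiliary sheaves
\[
\overline{P} := \overline{L}_1 + \cdots + \overline{L}_r + \overline{A}, \qquad
\overline{Q} := \overline{L}_1 + \cdots + \overline{L}_r,
\]
(after twisting the $\overline{L}_i$ by a sufficiently ample sheaf so that all of them, and $\overline{P}-\overline{L}_i$, become generated by small sections with strictly positive metric — this only changes the constants), then for any $\pmb{a}$ with $|\pmb{a}|_1$ large one has an inclusion of the relevant $H^0$ together with a uniform comparison of sup-norms. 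Concretely, writing $n=|\pmb{a}|_1$, one realizes $\pmb{a}\cdot L + (b-c)A$ as a summand inside $nP + (\text{bounded})$ and thereby reduces to the one-variable estimate applied to $\overline{P}$ (resp.\ $\overline{Q}$) with index $n$.

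First I would fix a $C^\infty$-hermitian ample sheaf $\overline{H}$ and an integer $m_0$ such that $\overline{L}_i + m_0\overline{H}$ and $\overline{A}+m_0\overline{H}$ are all generated by strictly small sections; replacing $\overline{L}_i$ by $\overline{L}_i + m_0\overline{H}$ absorbs at most an extra $O(|\pmb{a}|_1^{d-1})$ into $C$, since $\ah$ changes by a controlled amount under such a twist — this is exactly the kind of estimate recorded in \cite{MoCont}. Next, for $\pmb{a}\in\ZZ^r$ with $n:=|\pmb{a}|_1$, I would produce an injection
\[
H^0\bigl(\pmb{a}\cdot L + (b-c)A\bigr)\ \hookrightarrow\ H^0\bigl(\pmb{a}\cdot L - cA\bigr)\otimes H^0(bA)
\]
given by multiplication with a fixed basis of small sections of $bA$ (here $bA$ is generated by small sections after the twist, and $b\le n$), but this overcounts; the right move instead is to bound $\ah(\pmb{a}\cdot L + (b-c)A)$ directly against $\ah(\pmb{a}\cdot L - cA)$ using the single-indexed Theorem~3.1 of \cite{MoCont} applied with the ambient invertible sheaf $\overline{A}$ and with $\pmb{a}\cdot\overline{L} - c\overline{A}$ playing the role of the ``base'' $C^\infty$-hermitian sheaf — the point being that Theorem~3.1 of \cite{MoCont} is stated for a \emph{single} varying multiple of one sheaf, and the base sheaf there is allowed to be \emph{any} fixed $C^\infty$-hermitian invertible sheaf, so we may let the base itself vary through the family $\{\pmb{a}\cdot\overline{L} - c\overline{A}\}$ \emph{provided} we have uniform control, as $\pmb{a}$ ranges, of the geometric quantities (global generation degree, bounds on metrics, Seshadri-type constants) that enter the constants of Theorem~3.1. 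That uniformity is the crux: since $\pmb{a}\cdot\overline{L}$ is a $\ZZ_{\ge0}$-combination (after sign-splitting) of $r$ fixed sheaves, all the relevant constants grow at most like $|\pmb{a}|_1$ times fixed constants, which is precisely what produces the shape $Cb|\pmb{a}|_1^{d-1} + D|\pmb{a}|_1^{d-1}\log|\pmb{a}|_1$ in the conclusion.

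The main obstacle I anticipate is making the dependence of the constants in \cite[Theorem~3.1]{MoCont} on the base sheaf explicit and linear in $|\pmb{a}|_1$: the original statement treats the base as fixed, so its constants are merely ``some constants,'' whereas here I need to track how they scale when the base is $\pmb{a}\cdot\overline{L} - c\overline{A}$. I expect this to require re-examining the proof of \cite[Theorem~3.1]{MoCont} — in particular the step where one chooses a section cutting out a hypersurface and estimates the change in $\ah$ via an exact sequence and arithmetic Riemann–Roch / Gromov-type inequalities — and checking that each ingredient (the degree needed for global generation, the sup-norm bounds, the volume of the arithmetic divisor) is bounded by a fixed constant times $|\pmb{a}|_1$. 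Once that bookkeeping is done, the induction on $d$ (cutting by a hyperplane section of $X$ and applying the $(d-1)$-dimensional case to the family of sheaves restricted to it) goes through exactly as in \cite{MoCont}, with the extra linear-in-$|\pmb{a}|_1$ factors accumulating into the claimed $|\pmb{a}|_1^{d-1}$ and $|\pmb{a}|_1^{d-1}\log|\pmb{a}|_1$ terms, and the hypothesis $|\pmb{a}|_1\ge a_0$ is used to absorb lower-order error terms and to ensure $\log|\pmb{a}|_1>0$.
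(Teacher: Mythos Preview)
Your proposal is not a proof but a plan, and the plan itself wanders before landing on the right idea. The first two reductions you try (embedding $\pmb{a}\cdot L$ into multiples of an auxiliary $\overline{P}$, or tensoring with small sections of $bA$) do not give the inequality you want, as you partly acknowledge. The third idea---apply \cite[Theorem~3.1]{MoCont} as a black box with base $\pmb{a}\cdot\overline{\pmb{L}}-c\overline{A}$ and track how its constants scale in $|\pmb{a}|_1$---cannot work as stated either: in that theorem the constants depend on the \emph{fixed} hermitian sheaf $\overline{L}$ in a way that is never made explicit, so there is nothing to track. What you are really proposing in your last paragraph is to reopen the proof and redo each estimate uniformly in $\pmb{a}$. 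That is exactly what the paper does, and it is not bookkeeping: it requires a genuinely new ingredient.

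The paper first reduces to $\pmb{a}\in\ZZ_{\ge 0}^r$ by the sign-flip $\overline{\pmb{L}}\mapsto\overline{\pmb{L}}(\pmb{\epsilon})$ and then to an $L^2$-version via Gromov's inequality; these are easy. The substantive step is a multi-indexed distorsion estimate (Theorem~\ref{thm:estimate:disfun:A:B}): for positive $\overline{A}$ and positive $\overline{B}_1,\dots,\overline{B}_l$ one has $\dist(a\overline{A}-\pmb{b}\cdot\overline{\pmb{B}},c_1(\overline{A})^n)\le C_\epsilon\,h^0(aA)$ uniformly in $a,\pmb{b}$. This is proved by a peak-section trick on each $\overline{B}_i$ separately and tensoring. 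It is then applied through the identity
\[
\pmb{a}\cdot\overline{\pmb{L}}-c\overline{A}=2|\pmb{a}|_1\bigl(\textstyle\sum_i\overline{L}_i+r\overline{A}\bigr)-\sum_i(2|\pmb{a}|_1-a_i)(\overline{L}_i+\overline{A})-(c+|\pmb{a}|_1)\overline{A},
\]
which expresses the varying sheaf as one big positive sheaf minus several positive sheaves with large coefficients. This is the device that makes the constants uniform in $\pmb{a}$, and it is absent from your outline. After that, the argument follows the exact-sequence pattern of \cite{MoCont}: cut by a single smooth section $s$ of $A$, bound the sub part via Corollary~\ref{cor:comp:sub:L2:ball} (which uses the distorsion estimate and Jensen's inequality), and bound the quotient part on $bY$ by summing $b$ contributions on $Y$, each controlled via restriction and the sections $t_i\in H^0(nA-L_i)$. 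There is no induction on $d$; the hypersurface is cut once.
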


The proof of Theorem~\ref{thm:h:0:estimate:big:main} is almost same as
one of  \cite[Theorem~3.1]{MoCont}. For reader's convenience, we will give its proof in
the remaining of this section. We use the same notation as in \cite{MoCont}.
Let us begin with distorsion functions.

\subsection{Distorsion function}
\setcounter{Theorem}{0}
Let $M$ be an $n$-equidimensional projective complex manifold.
First let us recall distorsion functions.
Let  $\Phi$ be a volume form of $M$ and
$\overline{H} = (H, h)$ a $C^{\infty}$-hermitian invertible
sheaf on $M$.
For $s, s' \in H^0(M, H)$, we set
\[
\langle s, s' \rangle_{\overline{H}, \Phi} =
\int_M h(s, s') \Phi.
\]
Let $s_1, \ldots, s_N$ be an orthonormal basis of $H^0(X, H)$
with respect to $\langle\ , \ \rangle_{\overline{H}, \Phi}$.
Then it is easy to see that, for all $x \in M$,
the quantity $\sum_{i=1}^N h(s_i, s_i)(x)$ does not depend on the choice of
the orthonormal basis $s_1, \ldots, s_N$, so that
we define
\[
\dist(\overline{H}, \Phi)(x) =
\sum_{i=1}^N h(s_i, s_i)(x).
\]
The function $\dist(\overline{H}, \Phi)$ is called
the {\em distorsion function of $\overline{H}$ with respect to $\Phi$}.
For a positive number $\lambda$, it is easy to check that
$\dist(\overline{H}, \lambda \Phi) = \lambda^{-1} \dist(\overline{H}, \Phi)$.
Moreover, if $M_1, \ldots, M_l$ are connected components of $M$, then
\[
\dist(\overline{H}, \Phi)  = \dist\left(\rest{\overline{H}}{X_1}, \rest{\Phi}{X_1}\right)  + \cdots
+ \dist\left(\rest{\overline{H}}{X_l}, \rest{\Phi}{X_l}\right).
\]

Let $\overline{A}$ be a positive  $C^{\infty}$-hermitian invertible sheaf on $M$
and  $\overline{\pmb{B}} = (\overline{B}_1,
\ldots, \overline{B}_l)$  a finite sequence of 
positive $C^{\infty}$-hermitian invertible sheaves
on $M$. Then we have the following:

\begin{Theorem}
\label{thm:estimate:disfun:A:B}
For any real number $\epsilon$ with $0< \epsilon < 1$,
there is a positive constant
$a(\epsilon)$ such that
\[
\dist(a \overline{A} - \pmb{b} \cdot \overline{\pmb{B}}, 
c_1(\overline{A})^n)
\leq \frac{(1+\epsilon)^{l+1}}{(1-\epsilon)^l} h^0(a A) 
\]
for all $a \in \ZZ$ and
$\pmb{b} = (b_1, \ldots, b_l) \in \ZZ^l$ with $a \geq a(\epsilon),
b_1 \geq a(\epsilon), \ldots, b_l \geq a(\epsilon)$, where
$\pmb{b} \cdot \overline{\pmb{B}} = b_1 \overline{B}_1 + \cdots + b_l \overline{B}_l$.
\end{Theorem}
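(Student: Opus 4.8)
The plan is to compare the distorsion function of $a\overline{A} - \pmb{b} \cdot \overline{\pmb{B}}$ with that of $a\overline{A}$ by multiplying sections with suitable ``peak sections'' of $b_1\overline{B}_1, \ldots, b_l\overline{B}_l$, one factor at a time. Since both $\dist$ and $h^0$ decompose over the connected components of $M$, and since $h^0(aA)$ is the sum of the corresponding quantities over the components, we may assume that $M$ is connected. I will also use the variational description
\[
\dist(\overline{H}, \Phi)(x) = \sup_{0 \neq s \in H^0(M,H)} \frac{\vert s(x) \vert^2}{\langle s, s \rangle_{\overline{H}, \Phi}},
\]
valid for any $C^{\infty}$-hermitian invertible sheaf $\overline{H}$ on $M$ and any volume form $\Phi$ (interpreted as $0$ when $H^0(M,H) = 0$); it follows from the Cauchy--Schwarz inequality applied to an orthonormal basis.

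First I would record the two asymptotic inputs at the level of a single positive sheaf. Applying the uniform asymptotic behaviour of distorsion functions of positive $C^{\infty}$-hermitian invertible sheaves (cf. \cite{MoCont}) to $\overline{A}$ with the volume form $c_1(\overline{A})^n$, together with asymptotic Riemann--Roch and the fact that $\int_M c_1(A)^n$ is a positive integer, one gets a constant $a_0(\epsilon)$ with
\[
\dist(a\overline{A}, c_1(\overline{A})^n)(x) \leq (1+\epsilon)\, h^0(aA) \qquad (a \geq a_0(\epsilon),\ x \in M).
\]
Applying the same asymptotic to each $\overline{B}_i$ with the volume form $c_1(\overline{B}_i)^n$, and using that the leading term of $\dist(b\overline{B}_i, c_1(\overline{B}_i)^n)$ is the constant $b^n/n!$, one finds $b_i(\epsilon)$ such that $bB_i$ is globally generated and
\[
(1-\epsilon)\frac{b^n}{n!} \leq \dist(b\overline{B}_i, c_1(\overline{B}_i)^n)(y) \leq (1+\epsilon)\frac{b^n}{n!} \qquad (b \geq b_i(\epsilon),\ y \in M).
\]
Then set $a(\epsilon) := \max\{ a_0(\epsilon), b_1(\epsilon), \ldots, b_l(\epsilon) \}$.

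Now fix $x \in M$ and $a, b_1, \ldots, b_l \geq a(\epsilon)$. For each $i$ choose a peak section $t_{i,x} \in H^0(M, b_iB_i)$ of $b_i\overline{B}_i$ at $x$, that is, one with $\vert t_{i,x}(x) \vert^2 = \dist(b_i\overline{B}_i, c_1(\overline{B}_i)^n)(x) > 0$ and $\vert t_{i,x}(y) \vert^2 \leq \dist(b_i\overline{B}_i, c_1(\overline{B}_i)^n)(y)$ for all $y$ (the section attached to the reproducing kernel of $\langle\ ,\ \rangle_{b_i\overline{B}_i, c_1(\overline{B}_i)^n}$ at $x$ does this, by Cauchy--Schwarz for the kernel; it is nonzero). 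Hence
\[
\frac{\sup_{y\in M}\vert t_{i,x}(y)\vert^2}{\vert t_{i,x}(x)\vert^2} \leq \frac{\sup_{y\in M}\dist(b_i\overline{B}_i, c_1(\overline{B}_i)^n)(y)}{\dist(b_i\overline{B}_i, c_1(\overline{B}_i)^n)(x)} \leq \frac{1+\epsilon}{1-\epsilon}.
\]
Put $\overline{H}_0 = a\overline{A}$ and $\overline{H}_j = \overline{H}_{j-1} - b_j\overline{B}_j$ for $1 \leq j \leq l$. For $0 \neq s \in H^0(M, H_j)$ one has $s \otimes t_{j,x} \in H^0(M, H_{j-1})$, $\vert s(x) \vert^2 = \vert (s \otimes t_{j,x})(x) \vert^2 / \vert t_{j,x}(x) \vert^2$, and, taking all $L^2$-inner products with respect to $c_1(\overline{A})^n$,
\[
\langle s, s \rangle_{\overline{H}_j, c_1(\overline{A})^n} = \int_M \frac{\vert s \otimes t_{j,x} \vert^2}{\vert t_{j,x} \vert^2}\, c_1(\overline{A})^n \geq \frac{\langle s \otimes t_{j,x}, s \otimes t_{j,x} \rangle_{\overline{H}_{j-1}, c_1(\overline{A})^n}}{\sup_{y\in M}\vert t_{j,x}(y)\vert^2}.
\]
Combining these with the variational description applied to $\overline{H}_{j-1}$ gives
\[
\dist(\overline{H}_j, c_1(\overline{A})^n)(x) \leq \frac{\sup_{y\in M}\vert t_{j,x}(y)\vert^2}{\vert t_{j,x}(x)\vert^2}\, \dist(\overline{H}_{j-1}, c_1(\overline{A})^n)(x) \leq \frac{1+\epsilon}{1-\epsilon}\, \dist(\overline{H}_{j-1}, c_1(\overline{A})^n)(x).
\]
Iterating for $j = l, l-1, \ldots, 1$ and then feeding in the base estimate for $\overline{H}_0 = a\overline{A}$ yields
\[
\dist(a\overline{A} - \pmb{b}\cdot\overline{\pmb{B}}, c_1(\overline{A})^n)(x) \leq \left(\frac{1+\epsilon}{1-\epsilon}\right)^{\! l}(1+\epsilon)\, h^0(aA) = \frac{(1+\epsilon)^{l+1}}{(1-\epsilon)^l}\, h^0(aA),
\]
with the right-hand side independent of $x$, as required.

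The step I expect to be the crux — and the one where the positivity of the $\overline{B}_i$ is \emph{indispensable} — is the uniform control of the peak sections: that a single threshold $b_i(\epsilon)$, depending only on $\overline{B}_i$ and $\epsilon$, forces $\sup_{y}\vert t_{i,x}(y)\vert^2 / \vert t_{i,x}(x)\vert^2$ to be close to $1$ \emph{simultaneously for all} $x \in M$. This is exactly what the uniform (in $x$) asymptotic expansion of the distorsion functions of $b\overline{B}_i$ on the compact manifold $M$ supplies, and it is what lets all the constants be chosen independently of the indices; the choice of the auxiliary volume form $c_1(\overline{B}_i)^n$ (rather than $c_1(\overline{A})^n$) for constructing $t_{i,x}$ is essential here, since it makes the leading term of the relevant distorsion function a constant, so that the displayed ratio tends to $1$.
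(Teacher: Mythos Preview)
Your argument is correct and rests on the same two pillars as the paper's proof: the Bouche--Tian uniform asymptotic for distorsion functions of positive line bundles, and the resulting ``peak section'' estimate $\sup_y |t_{i,x}(y)|^2 / |t_{i,x}(x)|^2 \leq (1+\epsilon)/(1-\epsilon)$.

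The organization, however, is genuinely different. The paper tensors the peak sections $s_{b_1},\ldots,s_{b_l}$ together at once into $s_{\pmb{b}}$, chooses an orthonormal basis $t_1,\ldots,t_r$ of $H^0(aA-\pmb{b}\cdot B)$ such that the $s_{\pmb{b}}\otimes t_i$ are orthogonal in $H^0(aA)$, and then bounds $\sum_i |t_i(x)|^2$ by recognizing $\{s_{\pmb{b}}\otimes t_i/\Vert s_{\pmb{b}}\otimes t_i\Vert\}$ as part of an orthonormal basis of $H^0(aA)$. You instead use the variational description $\dist(\overline{H},\Phi)(x)=\sup_{s\neq 0}|s(x)|^2/\langle s,s\rangle$ and peel off one factor $b_j\overline{B}_j$ at a time via the pointwise inequality $\langle s,s\rangle_{\overline{H}_j}\cdot\sup_y|t_{j,x}(y)|^2\geq\langle s\otimes t_{j,x},s\otimes t_{j,x}\rangle_{\overline{H}_{j-1}}$. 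Your inductive route is slightly more modular and avoids the auxiliary choice of a compatible orthonormal basis; the paper's direct route makes the comparison with $\dist(a\overline{A})$ visible in one step. A minor remark: your appeal to the leading constant $b^n/n!$ (via asymptotic Riemann--Roch) is not actually needed --- the Bouche--Tian bound $(1-\epsilon)h^0(bB_i)\leq \dist(b\overline{B}_i,\Phi(\overline{B}_i))\leq (1+\epsilon)h^0(bB_i)$ already gives the ratio estimate directly, as in the paper.
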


\begin{proof}
Clearly we may assume that $M$ is connected.
In the following, the hermitian metrics of $a\overline{A}$, $b_i \overline{B}_i$ and
$\pmb{b} \cdot \overline{\pmb{B}}$ are denoted by
$h_{a\overline{A}}$, $h_{b_i \overline{B}_i}$ and
$h_{\pmb{b} \cdot \overline{\pmb{B}}}$ respectively.
By Bouche-Tian's theorem (\cite{Bou}, \cite{Tian}), there is a positive constant
$a(\epsilon)$ such that
\[
\begin{cases}
h^0(aA)\left(1- \epsilon \right) \leq \dist(a \overline{A}, \Phi(\overline{A}))(z) \leq 
h^0(aA) \left( 1 + \epsilon \right) \\
h^0(b_i B_i)\left( 1-\epsilon \right)
\leq \dist(b_i \overline{B}_i, \Phi(\overline{B}_i))(z) \leq h^0(b_i B_i) \left( 1 + \epsilon \right)
\quad(i=1, \ldots, l)
\end{cases}
\]
hold for all
$z \in M$ and all $a \geq a(\epsilon), b_1 \geq a(\epsilon), \ldots, b_l \geq a(\epsilon)$,
where 
\[
\Phi(\overline{A}) = \frac{c_1(\overline{A})^n}{\int_M c_1(\overline{A})^n}\quad\text{and}\quad
\Phi(\overline{B}_i) = \frac{c_1(\overline{B}_i)^n}{\int_M c_1(\overline{B}_i)^n}
\]
for $i=1, \ldots, l$.

Fix an arbitrary point $x$ of $M$.
Let
$\pmb{b} = (b_1, \ldots, b_l) \in \ZZ^l$ with
$b_1 \geq a(\epsilon), \ldots, b_l \geq a(\epsilon)$.
We consider an orthonormal basis of
$H^0(b_iB_i)$ with respect to 
the $L^2$-norm $\langle\ , \ \rangle_{b_i \overline{B}_i, \Phi(\overline{B}_i)}$
arising from $h_{b_i\overline{B}_i}$ and $\Phi(\overline{B}_i)$
such that the only one element of the basis has non-zero value
at $x$.
Let $s_{b_i}$ be a such element of the basis.
Then we have
\[
h_{b_i\overline{B}_i}(s_{b_i}, s_{b_i})(x) = 
\dist(b_i\overline{B}_i, \Phi(\overline{B}_i))(x) 
\geq (1-\epsilon) h^0(b_iB_i).
\]
On the other hand, since
\[
\Vert s_{b_i} \Vert^2_{\sup} \leq 
\sup_{z \in M} \dist(b_i\overline{B}_i, \Phi(\overline{B}_i))(z) \leq
(1+\epsilon) h^0(b_iB_i),
\]
we obtain
\[
\frac{h_{b_i\overline{B}_i}(s_{b_i}, s_{b_i})(x)}{\Vert s_{b_i} \Vert^2_{\sup}} \geq
\frac{1-\epsilon}{1+\epsilon}.
\]
If we set
$s_{\pmb{b}} = s_{b_1} \otimes \cdots \otimes s_{b_l}$, then
\[
\frac{h_{\pmb{b} \cdot \overline{\pmb{B}}}(s_{\pmb{b}}, s_{\pmb{b}})(x)}
{\Vert s _{\pmb{b}} \Vert_{\sup}^2} \geq
\frac{h_{b_1\overline{B}_1}(s_{b_1}, s_{b_1})(x) \cdots h_{b_l\overline{B}_l}(s_{b_l}, s_{b_l})(x)}{\Vert s_{b_1} \Vert_{\sup}^2 \cdots \Vert s_{b_l} \Vert_{\sup}^2}
\geq \left(\frac{1-\epsilon}{1+\epsilon}\right)^l.
\]

For $a \geq a(\epsilon)$, let $t_1, \ldots, t_r$ be an orthonormal basis of
$H^0(aA - \pmb{b} \cdot \overline{\pmb{B}})$ with respect to
$\langle\ ,\ \rangle_{a\overline{A}- \pmb{b} \cdot \overline{\pmb{B}}, \Phi(\overline{A})}$
such that $s_{\pmb{b}} \otimes t_1, \ldots s_{\pmb{b}} \otimes t_r$
are orthogonal with respect to
$\langle\ ,\ \rangle_{a\overline{A}, \Phi(\overline{A})}$
as elements of $H^0(aA)$.
Then, since
\[
\left\{ s_{\pmb{b}} \otimes t_i/\Vert s_{\pmb{b}} \otimes t_i \Vert_{a\overline{A}, \Phi(\overline{A})} \right\}_{i=1, \ldots, r}
\]
form a part of an orthonormal basis of $H^0(aA)$,
\[
\sum_{i=1}^r \frac{h_{a \overline{A}}(s_{\pmb{b}} \otimes t_i, s_{\pmb{b}} \otimes t_i)(x)}
{\Vert s_{\pmb{b}} \otimes t_i \Vert^2_{a\overline{A}, \Phi(\overline{A})}} \leq 
\dist(aA, \Phi(\overline{A}))(x) \leq (1 + \epsilon) h^0(aA).
\]
Note that $\Vert s_{\pmb{b}} \otimes t_i \Vert^2_{a\overline{A}, \Phi(\overline{A})} \leq
\Vert s_{\pmb{b}} \Vert^2_{\sup}$.
Moreover, since $\lambda = \int_M c_1(\overline{A})^n \geq 1$,
\[
\dist(a\overline{A} - \pmb{b} \cdot \overline{\pmb{B}}, 
c_1(\overline{A})^n) = \lambda^{-1} \dist(a\overline{A} - \pmb{b} \cdot \overline{\pmb{B}}, 
\Phi(\overline{A})) \leq \dist(a\overline{A} - \pmb{b} \cdot \overline{\pmb{B}}, 
\Phi(\overline{A})).
\]
Therefore,
\begin{multline*}
 \left(\frac{1-\epsilon}{1+\epsilon}\right)^l \dist(a\overline{A} - \pmb{b} \cdot \overline{\pmb{B}}, 
c_1(\overline{A})^n)(x) \\
\leq
\frac{h_{\pmb{b} \cdot \overline{\pmb{B}}}(s_{\pmb{b}}, s_{\pmb{b}})(x)}{\Vert s_{\pmb{b}} \Vert^2_{\sup}} 
\sum_{i=1}^r h_{a\overline{A}-\pmb{b} \cdot \overline{\pmb{B}}}(t_i,  t_i)(x) \\
\qquad\qquad\qquad \leq
\sum_{i=1}^r  \frac{h_{\pmb{b} \cdot \overline{\pmb{B}}}(s_{\pmb{b}}, s_{\pmb{b}})(x)}{\Vert s_{\pmb{b}} \otimes t_i \Vert^2_{a\overline{A}, \Phi(\overline{A})}} 
h_{a\overline{A}-\pmb{b} \cdot \overline{\pmb{B}}}(t_i,  t_i)(x) \\
= \sum_{i=1}^r \frac{h_{a \overline{A}}(s_{\pmb{b}} \otimes t_i, s_{\pmb{b}} \otimes t_i)(x)}
{\Vert s_{\pmb{b}} \otimes t_i \Vert^2_{a\overline{A}, \Phi(\overline{A})}} \leq
(1 + \epsilon)h^0(aA).
\end{multline*}
Thus the theorem follows.
\end{proof}

Here we recall several notations:
Let $\KK$ be either $\RR$ or $\CC$.
Let $V$ be a finite dimensional vector space over $\KK$.
A map $\langle\ ,\ \rangle : V \times V \to \KK$ is called
a {\em $\KK$-inner product} if the following conditions (1) $\sim$ (4) are satisfied:
(1) $\langle x, y \rangle = \overline{\langle y, x \rangle}$
($\forall x, y \in V$),
(2) $\langle x + x', y \rangle = \langle x, y \rangle + \langle x' , y \rangle$ and
$\langle a x, y \rangle = a \langle x, y \rangle$
($\forall x,x',y \in V,  \forall a \in \KK$),  (3) $\langle x, x \rangle \geq 0$ ($\forall x \in V$), 
(4) $\langle x, x \rangle = 0$ $\Longleftrightarrow$ $x = 0$.
Let $(V_1, \langle\ ,\ \rangle_1)$ and
$(V_2, \langle\ ,\ \rangle_2)$ be finite dimensional vector spaces over $\KK$ with
$\KK$-inner products $\langle\ ,\ \rangle_1$ and $\langle\ ,\ \rangle_2$, and
let $\phi : V_1 \to V_2$ be an isomorphism over $\KK$.
For a basis $\{ x_1, \ldots, x_n \}$ of $V_1$, we consider
a quantity
\[
-\frac{1}{2} \log \left( \frac{\det (\langle x_i, x_j \rangle_1)}{\det (\langle \phi(x_i), \phi(x_j) \rangle_2)} \right),
\]
which
does not depend on the choice of the basis $\{ x_1, \ldots, x_n \}$ of $V_1$.
It is called the {\em volume difference} of
$(V_1, \langle\ ,\ \rangle_1) \overset{\phi}{\longrightarrow}(V_2, \langle\ ,\ \rangle_2)$
and is denoted by
\[
\voldiff((V_1, \langle\ ,\ \rangle_1) \overset{\phi}{\longrightarrow}(V_2, \langle\ ,\ \rangle_2)).
\]
It is easy to check that
\[
[\KK:\RR]  \voldiff((V_1, \langle\ ,\ \rangle_1) \overset{\phi}{\longrightarrow}(V_2, \langle\ ,\ \rangle_2)) =
\log \left( \frac{\vol \{ x \in V_1 \mid \langle x, x \rangle_1 \leq 1 \}}%
{\vol \{ x \in V_1 \mid \langle \phi(x), \phi(x) \rangle_2 \leq 1 \}} \right),
\]
where $\vol$ is a Haar measure of $V_1$.
Thus if $(M, \Vert\cdot\Vert_1)$ and
$(M, \Vert\cdot\Vert_2)$ are normed $\ZZ$-modules and
$\Vert\cdot\Vert_1$ and $\Vert\cdot\Vert_2$ are $L^2$-norms,
then
\[
\achi(M, \Vert\cdot\Vert_1) - \achi(M, \Vert\cdot\Vert_2) = 
\voldiff((M \otimes_{\ZZ} {\RR}, \Vert\cdot\Vert_1) \overset{\operatorname{id}}{\longrightarrow}(M \otimes_{\ZZ} {\RR}, \Vert\cdot\Vert_2))
\]

\bigskip
Let us consider a corollary of Theorem~\ref{thm:estimate:disfun:A:B}.
Let 
$\overline{L}_1, \ldots, \overline{L}_r, \overline{A}$
be $C^{\infty}$-hermitian invertible sheaves
on $M$ such that
$\overline{A}$ and $\overline{L}_i + \overline{A}$ are positive
for $i=1, \ldots, r$.
We set
\[
\pmb{L} = (L_1, \ldots, L_r),\quad
\overline{\pmb{L}} = (\overline{L}_1, \ldots, \overline{L}_r),\quad
\Phi = c_1(\overline{L}_1 + \cdots + \overline{L}_r + r \overline{A})^n.
\] 
Let $\pmb{a}  \in \ZZ_{\geq 0}^r$ and $b, c \in \ZZ_{\geq 0}$.
Let $s$ be a section of $H^0(bA)$ such that
$\Vert s \Vert_{\sup} \leq 1$ and $s$ is non-zero on each connected component of $M$. Let
\[
\langle\ ,\ \rangle_{\pmb{a} \cdot \overline{\pmb{L}}- c \overline{A}}\quad\text{and}\quad
\langle\ ,\ \rangle_{\pmb{a} \cdot \overline{\pmb{L}} + (b-c) \overline{A}}
\]
be the natural $\CC$-inner products of $H^0(\pmb{a} \cdot \pmb{L} - cA)$ and
$H^0(\pmb{a} \cdot \pmb{L}+ (b-c)A)$ with respect to
$\Phi$. Here we consider a submetric
$\langle\ ,\ \rangle_{\pmb{a} \cdot \overline{\pmb{L}} + (b-c) \overline{A},s,\sub}$ of $H^0(\pmb{a} \cdot \pmb{L}- cA)$
induced by
$s$ and the metric
$\langle\ ,\ \rangle_{\pmb{a} \cdot \overline{\pmb{L}} + (b-c) \overline{A}}$, that is,
\[
\langle t,t'  \rangle_{\pmb{a} \cdot \overline{\pmb{L}} + (b-c) \overline{A},s,\sub}
= \langle s t, st' \rangle_{\pmb{a} \cdot \overline{\pmb{L}}+ (b-c) \overline{A}}
\]
for $t, t' \in H^0(\pmb{a} \cdot \pmb{L} - cA)$.
Then we have the following corollary.

\begin{Corollary}
\label{cor:comp:sub:L2:ball}
Let $\gamma(\pmb{a},c,s)$ be the volume difference of
\[
\left( H^0(\pmb{a} \cdot \pmb{L}- cA), \langle ,\rangle_{\pmb{a} \cdot \overline{\pmb{L}} - c \overline{A}}
\right)
\overset{\operatorname{id}}{\longrightarrow} \left( H^0(\pmb{a} \cdot \pmb{L} - cA), \langle , \rangle_{\pmb{a} \cdot \overline{\pmb{L}} + (b-c) \overline{A},s,\sub} \right).
\]
For any real number $\epsilon$ with $0 < \epsilon < 1$,
there is positive constant $a(\epsilon)$ such that
\[
\gamma(\pmb{a}, c,s) \geq \frac{(1+\epsilon)^{r+2}}{(1-\epsilon)^{r+1}} h^0(2\vert \pmb{a} \vert_1(L_1 + \cdots + L_r + rA)) \left(\int_M \log(\vert s \vert) \Phi \right)
\]
for all $\pmb{a}  \in \ZZ_{\geq 0}^r$ and
$c \in \ZZ_{\geq 0}$ with
$\vert \pmb{a} \vert_1 \geq a(\epsilon)$.
\end{Corollary}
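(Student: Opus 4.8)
The plan is to deduce the inequality from a pointwise upper bound for the distortion function of $\pmb{a}\cdot\overline{\pmb{L}} - c\overline{A}$ with respect to $\Phi$, supplied by Theorem~\ref{thm:estimate:disfun:A:B}, and then to convert that pointwise bound into the volume difference estimate by a Jensen-type argument; the whole argument runs parallel to the single-indexed case treated in \cite{MoCont}.

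First I would set $\overline{A}_0 := \overline{L}_1 + \cdots + \overline{L}_r + r\overline{A} = \sum_{i=1}^r(\overline{L}_i + \overline{A})$, which is positive by hypothesis and satisfies $\Phi = c_1(\overline{A}_0)^n$. Using $a_i \geq 0$ one checks the identity
\[
\pmb{a}\cdot\overline{\pmb{L}} - c\overline{A}
= 2\vert\pmb{a}\vert_1\,\overline{A}_0 - \Bigl( \sum_{i=1}^r (2\vert\pmb{a}\vert_1 - a_i)(\overline{L}_i + \overline{A}) + (\vert\pmb{a}\vert_1 + c)\overline{A} \Bigr).
\]
Apply Theorem~\ref{thm:estimate:disfun:A:B} with $\overline{A}$ replaced by $\overline{A}_0$, with $\overline{\pmb{B}} = (\overline{L}_1 + \overline{A}, \ldots, \overline{L}_r + \overline{A}, \overline{A})$ (so that $l = r+1$), with $a = 2\vert\pmb{a}\vert_1$ and $\pmb{b} = (2\vert\pmb{a}\vert_1 - a_1, \ldots, 2\vert\pmb{a}\vert_1 - a_r, \vert\pmb{a}\vert_1 + c)$. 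Since $2\vert\pmb{a}\vert_1 - a_i \geq \vert\pmb{a}\vert_1$ and $\vert\pmb{a}\vert_1 + c \geq \vert\pmb{a}\vert_1$, every entry of $(a,\pmb{b})$ is $\geq \vert\pmb{a}\vert_1$; hence, taking $a(\epsilon)$ to be the constant furnished by that theorem for $\overline{A}_0$ and $\overline{\pmb{B}}$, we obtain, for all $\pmb{a}$ with $\vert\pmb{a}\vert_1 \geq a(\epsilon)$,
\[
\dist\bigl(\pmb{a}\cdot\overline{\pmb{L}} - c\overline{A},\, \Phi\bigr)(z) \;\leq\; \frac{(1+\epsilon)^{r+2}}{(1-\epsilon)^{r+1}}\, h^0\bigl(2\vert\pmb{a}\vert_1(L_1 + \cdots + L_r + rA)\bigr) \;=:\; D_\epsilon
\]
for every $z \in M$ (recall that the distortion function and the volume difference both decompose over connected components of $M$, so we may work componentwise).

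Next I would pick a $\CC$-basis $x_1, \ldots, x_N$ of $H^0(\pmb{a}\cdot\pmb{L} - cA)$ that is orthonormal for $\langle\,,\,\rangle_{\pmb{a}\cdot\overline{\pmb{L}} - c\overline{A}}$ and orthogonal for $\langle\,,\,\rangle_{\pmb{a}\cdot\overline{\pmb{L}} + (b-c)\overline{A}, s, \sub}$ (simultaneous diagonalization of two positive definite Hermitian forms; note that $\langle\,,\,\rangle_{\pmb{a}\cdot\overline{\pmb{L}} + (b-c)\overline{A}, s, \sub}$ is positive definite because $s$ is non-zero on each component). By the defining formula for the volume difference, the orthonormality makes the numerator determinant equal to $1$, so $\gamma(\pmb{a}, c, s) = \tfrac{1}{2}\sum_{i=1}^N \log \langle x_i, x_i\rangle_{\pmb{a}\cdot\overline{\pmb{L}} + (b-c)\overline{A}, s, \sub}$. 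Since $\pmb{a}\cdot\overline{\pmb{L}} + (b-c)\overline{A} = b\overline{A} + (\pmb{a}\cdot\overline{\pmb{L}} - c\overline{A})$ and hermitian metrics multiply under tensor product, writing $\vert x_i\vert^2$ for $h_{\pmb{a}\cdot\overline{\pmb{L}} - c\overline{A}}(x_i,x_i)$ and $\vert s\vert^2$ for $h_{b\overline{A}}(s,s)$ we get $\langle x_i, x_i\rangle_{\pmb{a}\cdot\overline{\pmb{L}} + (b-c)\overline{A}, s, \sub} = \int_M \vert s\vert^2 \vert x_i\vert^2\,\Phi$, while $\int_M \vert x_i\vert^2\,\Phi = 1$; thus $\mu_i := \vert x_i\vert^2\,\Phi$ is a probability measure on $M$.

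Finally, Jensen's inequality for the concave function $\log$ applied to $\mu_i$ gives $\log\langle x_i, x_i\rangle_{\pmb{a}\cdot\overline{\pmb{L}} + (b-c)\overline{A}, s, \sub} = \log\int_M \vert s\vert^2\, d\mu_i \geq \int_M \log(\vert s\vert^2)\, d\mu_i = 2\int_M \log\vert s\vert\,\vert x_i\vert^2\,\Phi$, the right-hand side being finite since $\log\vert s\vert$ has at worst logarithmic singularities along $\operatorname{div}(s)$, which is a proper analytic subset of each connected component of $M$. Summing over $i$ and using $\sum_{i=1}^N \vert x_i\vert^2 = \dist(\pmb{a}\cdot\overline{\pmb{L}} - c\overline{A},\Phi)$,
\[
2\,\gamma(\pmb{a}, c, s) \;\geq\; 2\int_M \log\vert s\vert \cdot \dist\bigl(\pmb{a}\cdot\overline{\pmb{L}} - c\overline{A}, \Phi\bigr)\,\Phi.
\]
Because $\Vert s\Vert_{\sup} \leq 1$ forces $\log\vert s\vert \leq 0$ everywhere, while $0 \leq \dist(\pmb{a}\cdot\overline{\pmb{L}} - c\overline{A},\Phi) \leq D_\epsilon$ by the first step, the integrand satisfies $\log\vert s\vert \cdot \dist(\cdots) \geq D_\epsilon\,\log\vert s\vert$ pointwise; integrating and dividing by $2$ yields $\gamma(\pmb{a}, c, s) \geq D_\epsilon \int_M \log\vert s\vert\,\Phi$, which is the asserted inequality.

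I expect the only genuinely delicate points to be the bookkeeping in the first step — one must split off exactly $r+1$ positive pieces so that the exponents produced by Theorem~\ref{thm:estimate:disfun:A:B} come out as $(r+2,\,r+1)$ — together with the remark in the last step that the diagonal contributions must be summed \emph{before} invoking the distortion bound in the aggregated form $\sum_i \vert x_i\vert^2 \leq D_\epsilon$; using it termwise as $\vert x_i\vert^2 \leq D_\epsilon$ would cost a spurious factor of $N = \dim_{\CC} H^0(\pmb{a}\cdot\pmb{L} - cA)$ and destroy the estimate.
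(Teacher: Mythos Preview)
Your proof is correct and follows essentially the same route as the paper: the same algebraic decomposition of $\pmb{a}\cdot\overline{\pmb{L}} - c\overline{A}$ to invoke Theorem~\ref{thm:estimate:disfun:A:B} with $l=r+1$, the same simultaneous diagonalization of the two Hermitian forms, Jensen's inequality applied termwise, and the final step using $\log\vert s\vert\leq 0$ to convert the pointwise distortion bound into the stated inequality. Your added remarks on integrability of $\log\vert s\vert$ and on the necessity of summing before bounding are sound clarifications but not departures from the paper's argument.
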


\begin{proof}
Note that, for all $\pmb{a} = (a_1, \ldots, a_r) \in \ZZ_{\geq 0}^r$ and $c \in \ZZ_{\geq 0}$,
\begin{multline*}
\pmb{a} \cdot \overline{\pmb{L}} - c \overline{A} = 
2\vert \pmb{a} \vert_1 (\overline{L}_1 + \cdots + \overline{L}_r + r\overline{A}) \\
- \left(2\vert \pmb{a} \vert_1-a_1\right) (\overline{L}_1 + \overline{A})
- \cdots 
- \left(2\vert \pmb{a} \vert_1-a_r\right) (\overline{L}_r + \overline{A})
- (c + \vert \pmb{a} \vert_1) \overline{A}
\end{multline*}
and that $2\vert \pmb{a} \vert_1- a_i \geq \vert \pmb{a} \vert_1$ for all $i$.
Therefore,
by Theorem~\ref{thm:estimate:disfun:A:B}, there is a positive
constant $a(\epsilon)$
such that
\[
\dist(\pmb{a} \cdot \overline{\pmb{L}} -c \overline{A}, \Phi)
\leq \frac{(1+\epsilon)^{r+2}}{(1-\epsilon)^{r+1}} h^0(2\vert \pmb{a} \vert_1(L_1+\cdots + L_r+rA))
\]
for all $x \in M$ and all $\pmb{a}  \in \ZZ_{\geq 0}^r$ and
$c \in \ZZ_{\geq 0}$ with
$\vert \pmb{a} \vert_1 \geq a(\epsilon)$.
Let $t_1, \ldots, t_l$ be an orthonormal basis
of $H^0(\pmb{a} \cdot \pmb{L} - cA)$ with respect to
$\langle\ ,\ \rangle_{\pmb{a} \cdot \overline{\pmb{L}} - c \overline{A}}$
such that
$st_1, \ldots, st_l$ are orthogonal with respect to 
$\langle\ ,\ \rangle_{\pmb{a} \cdot \overline{\pmb{L}} + (b-c) \overline{A}}$.
Then
\[
\gamma(\pmb{a}, c,s) = -\frac{1}{2} \log \left(  
\frac{\det ( \langle t_i, t_j \rangle_{\pmb{a} \cdot \overline{\pmb{L}} - c \overline{A}})}{\det(\langle st_i ,st_j \rangle_{\pmb{a} \cdot \overline{\pmb{L}} - c \overline{A}})}
\right) = \frac{1}{2} \sum_{i=1}^l \log \int_M \vert s \vert^2 \vert t_i \vert^2 \Phi.
\]
Thus, using Jensen's inequality,
for all $\pmb{a}  \in \ZZ_{\geq 0}^r$ and $c \in \ZZ_{\geq 0}$
with $\vert \pmb{a} \vert_1 \geq a(\epsilon)$,
{\allowdisplaybreaks
\begin{align*}
\gamma(\pmb{a}, c, s) & \geq  \frac{1}{2} \sum_{i=1}^l  \int_M \log(\vert s \vert^2) \vert t_i \vert^2 \Phi \\
& =   \int_M \log(\vert s \vert) \dist(\pmb{a} \cdot \overline{\pmb{L}} - c \overline{A}, \Phi) \Phi \\
& \geq \frac{(1+\epsilon)^{r+2}}{(1-\epsilon)^{r+1}} h^0(2\vert \pmb{a} \vert_1(L_1 + \cdots + L_r+rA)) \left(\int_M \log(\vert s \vert)\Phi\right).
\end{align*}}
\end{proof}

\subsection{The proof of Theorem~\ref{thm:h:0:estimate:big:main}}
\setcounter{Theorem}{0}
In this subsection, let us give the proof of  Theorem~\ref{thm:h:0:estimate:big:main}.
Here we consider variants of Theorem~\ref{thm:h:0:estimate:big:main},
that is, the restricted version of Theorem~\ref{thm:h:0:estimate:big:main}
and the $L^2$-version of Theorem~\ref{thm:h:0:estimate:big:>=:0}.

\begin{Theorem}
\label{thm:h:0:estimate:big:>=:0}
In the situation of Theorem~\rom{\ref{thm:h:0:estimate:big:main}},
there are positive constants
$a_0$, $C$ and $D$ depending only on
$X$, $\overline{\pmb{L}}$ and
$\overline{A}$ such that
\begin{multline*}
\ah\left(H^0(\pmb{a} \cdot \pmb{L} + (b-c)A),
\Vert\cdot\Vert^{\pmb{a} \cdot \overline{\pmb{L}} + (b-c) \overline{A}}_{\sup}\right) \leq
\ah\left(H^0(\pmb{a}\cdot \pmb{L} - cA), 
\Vert\cdot\Vert^{\pmb{a} \cdot \overline{\pmb{L}} - c \overline{A}}_{\sup}\right) \\
+ C b \vert \pmb{a} \vert_{1}^{d-1} + D \vert \pmb{a} \vert_{1}^{d-1} \log(\vert \pmb{a} \vert_{1})
\end{multline*}
for all $\pmb{a} \in \ZZ_{\geq 0}^r$ and $b,c \in \ZZ$
with $\vert \pmb{a} \vert_{1} \geq b \geq c \geq 0$ and
$\vert \pmb{a} \vert_{1} \geq a_0$.
\end{Theorem}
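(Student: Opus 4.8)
The plan is to transcribe the proof of \cite[Theorem~3.1]{MoCont} almost verbatim, the one genuinely new ingredient being the multi-indexed distorsion estimate of Corollary~\ref{cor:comp:sub:L2:ball}. First I would make the standard reductions: one may assume $X(\CC)$ connected and, after the same twisting of building blocks as in \cite{MoCont}, that $\overline{A}$ and each $\overline{L}_i+\overline{A}$ are positive, so that $\Phi:=c_1(\overline{L}_1+\cdots+\overline{L}_r+r\overline{A})^{d-1}$ is a volume form on $X(\CC)$ to which Corollary~\ref{cor:comp:sub:L2:ball} applies. For $b$ large enough that $bA$ is globally generated, fix a section $s\in H^0(bA)$ with $\Vert s\Vert_{\sup}\le 1$, not identically zero on any connected component, and satisfying $-\int_{X(\CC)}\log\vert s\vert\,\Phi\le C_0\,b$ with $C_0$ independent of $b$; this is the section used in \cite{MoCont}, and the finitely many small values of $b$ are disposed of at the very end by tensoring with a fixed section of $b_0A$ and quoting the cases already proved.

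With such an $s$, the heart of the argument is the $L^2$-version. Tensoring by $s$ realizes $H^0(\pmb{a}\cdot\pmb{L}-cA)$, equipped with the submetric $\langle\ ,\ \rangle_{\pmb{a}\cdot\overline{\pmb{L}}+(b-c)\overline{A},s,\sub}$, as the sub-$\ZZ$-module $sH^0(\pmb{a}\cdot\pmb{L}-cA)$ of $H^0(\pmb{a}\cdot\pmb{L}+(b-c)A)$ with the restricted $L^2$-norm. Since $\Vert s\Vert_{\sup}\le 1$ forces $\int\log\vert s\vert\,\Phi\le 0$, Corollary~\ref{cor:comp:sub:L2:ball} together with $h^0\bigl(2\vert\pmb{a}\vert_1(L_1+\cdots+L_r+rA)\bigr)=O(\vert\pmb{a}\vert_1^{d-1})$ bounds the volume difference $\gamma(\pmb{a},c,s)$ below by $-C_1C_0\,b\,\vert\pmb{a}\vert_1^{d-1}$; by the volume-difference/$\achi$ identity recalled above this reads
\[
\achi\bigl(H^0(\pmb{a}\cdot\pmb{L}-cA),\langle\ ,\ \rangle_{\pmb{a}\cdot\overline{\pmb{L}}+(b-c)\overline{A},s,\sub}\bigr)\le\achi\bigl(H^0(\pmb{a}\cdot\pmb{L}-cA),\langle\ ,\ \rangle_{\pmb{a}\cdot\overline{\pmb{L}}-c\overline{A}}\bigr)+C_1C_0\,b\,\vert\pmb{a}\vert_1^{d-1}.
\]
Feeding this into the additivity of $\achi$ along $0\to sH^0(\pmb{a}\cdot\pmb{L}-cA)\to H^0(\pmb{a}\cdot\pmb{L}+(b-c)A)\to Q\to 0$, and bounding the cokernel $Q$ — which via restriction of $s$ injects into the sections of $\pmb{a}\cdot\pmb{L}+(b-c)A$ over the $(d-1)$-dimensional arithmetic variety $\operatorname{div}(s)$, so that a metric comparison and arithmetic Riemann--Roch give $\achi(Q)=O(b\vert\pmb{a}\vert_1^{d-1})+O(\vert\pmb{a}\vert_1^{d-1}\log\vert\pmb{a}\vert_1)$ — one obtains the $L^2$-version, i.e.\ the inequality of the theorem with $\ah$ and $\Vert\cdot\Vert_{\sup}$ replaced by $\achi$ and the natural $L^2$-norms.

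It remains to pass back from $\achi$ and $L^2$ to $\ah$ and $\Vert\cdot\Vert_{\sup}$, and this error bookkeeping is the main obstacle. By Gromov's inequality the two norms on each of these $H^0$'s are comparable up to a factor polynomial in $\vert\pmb{a}\vert_1$ while the ranks are $O(\vert\pmb{a}\vert_1^{d-1})$, so the comparison estimates between $\ah$, $\achi$, $\Vert\cdot\Vert_{\sup}$ and $\Vert\cdot\Vert_{L^2}$ from \cite{MoCont} turn the $L^2$-version into the asserted inequality once the resulting corrections are absorbed into the $O(b\vert\pmb{a}\vert_1^{d-1})$ and $O(\vert\pmb{a}\vert_1^{d-1}\log\vert\pmb{a}\vert_1)$ terms and $a_0$ is enlarged. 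The delicate point throughout is to keep every one of these contributions — the cokernel $\achi$, the $\ah/\achi$ discrepancy, the sup/$L^2$ distortion, and the quantity $-\int\log\vert s\vert\,\Phi$ — bounded by $O(b\vert\pmb{a}\vert_1^{d-1})+O(\vert\pmb{a}\vert_1^{d-1}\log\vert\pmb{a}\vert_1)$ uniformly in $\pmb{a},b,c$, exactly as in the single-index proof of \cite{MoCont}; the multi-index itself introduces no new difficulty once Corollary~\ref{cor:comp:sub:L2:ball} is available.
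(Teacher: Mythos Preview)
Your overall plan is the paper's: reduce so that $\overline{A}$ and each $\overline{L}_i+\overline{A}$ are positive, take $\Phi=c_1(\sum_i\overline{L}_i+r\overline{A})^{d-1}$, set up a sub/quotient exact sequence via a small section, bound the sub-piece by Corollary~\ref{cor:comp:sub:L2:ball}, bound the quotient, and pass between $L^2$ and sup norms by Gromov. The paper organizes this with $\ah$ rather than $\achi$, but that is bookkeeping.

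The gap is your treatment of the cokernel $Q$. In \cite{MoCont} and in the present paper the section is a \emph{fixed} small $s\in H^0(A)$ (not $H^0(bA)$), and one multiplies by $s^b$; then $\zeros(s^b)=bY$ is the $b$-th infinitesimal thickening of $Y=\zeros(s)$, not a reduced $(d-1)$-dimensional arithmetic variety, so a single ``metric comparison and arithmetic Riemann--Roch on $\zeros(s)$'' does not yield the bound you claim. One has to filter $bY$ by $kY$ for $0\le k\le b$ and control each of the $b$ graded pieces by sections on the fixed $Y$; this is Lemma~\ref{lem:thm:h:0:estimate:big:2}, whose proof the paper itself calls ``very complicated'' and which fills all of \S1.4. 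It requires comparing the induced sub-quotient norms with intrinsic $L^2$-norms on $Y$ via \cite[Lemma~1.4 and Corollary~1.1.3]{MoCont}, passing to the normalization of the horizontal part $Y'$ of $Y$, invoking the auxiliary small sections $t_i\in H^0(nA-L_i)$ of Claim~\ref{claim:thm:h:0:estimate:big:2} to dominate $\rest{\pmb{a}\cdot\overline{\pmb{L}}}{\widetilde{Y'}}$ by a multiple of $\rest{\overline{A}}{\widetilde{Y'}}$, and separately bounding the torsion contribution from the vertical components of $Y$ (Claim~\ref{claim:thm:h:0:estimate:big:5:2}). Your one-line dismissal of $Q$ hides precisely the longest and most delicate part of the argument. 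If instead you meant to choose, for each $b$, a section of $bA$ with smooth reduced divisor, you would then owe uniform-in-$b$ estimates over a moving family of divisors, which is not addressed either and is not the route taken in \cite{MoCont}.
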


\begin{Theorem}
\label{thm:h:0:estimate:big:main:L:2}
In the situation of Theorem~\rom{\ref{thm:h:0:estimate:big:main}},
we fix a volume form of $X(\CC)$ to give
$L^2$-norms $\Vert\cdot\Vert^{\pmb{a} \cdot \overline{\pmb{L}} + (b-c) \overline{A}}_{L^2}$ and $\Vert\cdot\Vert^{\pmb{a}\cdot \overline{\pmb{L}} - c \overline{A}}_{L^2}$.
Then
there are positive constants
$a'_0$, $C'$ and $D'$ depending only on
$X$, $\overline{\pmb{L}}$,
$\overline{A}$ and the volume form of $X(\CC)$ such that
\begin{multline*}
\ah\left(H^0(\pmb{a} \cdot \pmb{L} + (b-c)A),
\Vert\cdot\Vert^{\pmb{a} \cdot \overline{\pmb{L}} + (b-c) \overline{A}}_{L^2}\right) \leq
\ah\left(H^0(\pmb{a} \cdot \pmb{L} - cA), 
\Vert\cdot\Vert^{\pmb{a}\cdot \overline{\pmb{L}} - c \overline{A}}_{L^2}\right) \\
+ C' b \vert \pmb{a} \vert_{1}^{d-1} + D' \vert \pmb{a} \vert_{1}^{d-1} \log(\vert \pmb{a} \vert_{1})
\end{multline*}
for all $\pmb{a} \in \ZZ_{\geq 0}^r$ and $b,c \in \ZZ$
with $\vert \pmb{a} \vert_{1} \geq b \geq c \geq 0$ and
$\vert \pmb{a} \vert_{1} \geq a'_0$.
\end{Theorem}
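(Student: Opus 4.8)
The plan is to reproduce, in the present multi‑indexed setting, the proof of \cite[Theorem~3.1]{MoCont}, with Corollary~\ref{cor:comp:sub:L2:ball} playing the role of the one‑variable distorsion estimate there. \emph{First I would reduce} to the situation in which $\overline{A}$ and every $\overline{L}_i+\overline{A}$ are positive, the line bundle $A$ is effective with a section $s_A\in H^0(A)$ nonzero on each connected component of $X(\CC)$ and with $\Vert s_A\Vert_{\sup}\le 1$, and the fixed volume form equals $\Phi:=c_1(\overline{L}_1+\cdots+\overline{L}_r+r\overline{A})^{d-1}$. Replacing one volume form, or one family of $C^{\infty}$‑metrics, by another multiplies every $L^2$‑norm occurring by bounded factors and hence changes both sides of the asserted inequality by $O(\vert\pmb{a}\vert_1^{d-1})$, which is absorbed into the $D'$‑term. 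To reach positivity and effectivity, fix a very positive $C^{\infty}$‑hermitian sheaf $\overline{G}$, effective with a small section $s_G$, chosen so that $\overline{A}+\overline{G}$, all $\overline{L}_i+\overline{A}+\overline{G}$ and all $\overline{L}_i+\overline{A}+2\overline{G}$ are positive, and apply the already‑reduced statement to $\overline{A}':=\overline{A}+\overline{G}$. Multiplication by $s_G^{\otimes(b-c)}$, resp.\ by $s_G^{\otimes c}$, is a norm‑non‑increasing injection $H^0(\pmb{a}\cdot\pmb{L}+(b-c)A)\hookrightarrow H^0(\pmb{a}\cdot\pmb{L}+(b-c)A')$, resp.\ $H^0(\pmb{a}\cdot\pmb{L}-cA')\hookrightarrow H^0(\pmb{a}\cdot\pmb{L}-cA)$, of $L^2$‑normed $\ZZ$‑modules; since a sublattice has no more points in a unit ball than the ambient lattice, the estimate for $\overline{A}'$ yields the one for $\overline{A}$ over the same range of $\pmb{a},b,c$.

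\emph{In the reduced situation,} put $s:=s_A^{\otimes b}\in H^0(bA)$, let $N:=H^0(\pmb{a}\cdot\pmb{L}+(b-c)A)$ with its $L^2$‑norm, let $N'$ be the image of $H^0(\pmb{a}\cdot\pmb{L}-cA)\overset{\otimes s}{\longrightarrow}N$ --- on which the restriction of the $L^2$‑norm of $N$ is precisely the submetric $\Vert\cdot\Vert^{\pmb{a}\cdot\overline{\pmb{L}}+(b-c)\overline{A}}_{L^2,s,\sub}$ of Corollary~\ref{cor:comp:sub:L2:ball} --- and let $Q:=N/N'$ with the quotient metric. An elementary lattice‑point packing estimate gives
\[
\ah(N)\ \le\ \ah\!\left(H^0(\pmb{a}\cdot\pmb{L}-cA),\ \Vert\cdot\Vert^{\pmb{a}\cdot\overline{\pmb{L}}+(b-c)\overline{A}}_{L^2,s,\sub}\right)+(\rank N')\log 3+\ah(Q,\Vert\cdot\Vert_{\quot}),
\]
with $\rank N'=h^0(\pmb{a}\cdot\pmb{L}-cA)=O(\vert\pmb{a}\vert_1^{d-1})$. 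For the first term I would use, for $L^2$‑normed modules with $\Vert\cdot\Vert'\le\Vert\cdot\Vert''$, the inequality $\ah(\Vert\cdot\Vert')-\achi(\Vert\cdot\Vert')\le\ah(\Vert\cdot\Vert'')-\achi(\Vert\cdot\Vert'')+O(n\log n)$ (a consequence of the successive‑minima bounds for $\ah$): since $\Vert s\Vert_{\sup}\le 1$ gives $\Vert\cdot\Vert^{\pmb{a}\cdot\overline{\pmb{L}}+(b-c)\overline{A}}_{L^2,s,\sub}\le\Vert\cdot\Vert^{\pmb{a}\cdot\overline{\pmb{L}}-c\overline{A}}_{L^2}$, and since the $\achi$‑difference of these two norms on $H^0(\pmb{a}\cdot\pmb{L}-cA)$ is exactly $-\gamma(\pmb{a},c,s)$, the first term is at most $\ah(H^0(\pmb{a}\cdot\pmb{L}-cA),\Vert\cdot\Vert^{\pmb{a}\cdot\overline{\pmb{L}}-c\overline{A}}_{L^2})-\gamma(\pmb{a},c,s)+O(\vert\pmb{a}\vert_1^{d-1}\log\vert\pmb{a}\vert_1)$. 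Finally $\int\log\vert s\vert\,\Phi=b\int\log\vert s_A\vert\,\Phi\le 0$, so Corollary~\ref{cor:comp:sub:L2:ball} (with, say, $\epsilon=1/2$) gives $-\gamma(\pmb{a},c,s)\le \mathrm{const}\cdot b\cdot h^0(2\vert\pmb{a}\vert_1(L_1+\cdots+L_r+rA))\le C'b\vert\pmb{a}\vert_1^{d-1}$.

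\emph{It remains to bound} $\ah(Q,\Vert\cdot\Vert_{\quot})$ by $O(b\vert\pmb{a}\vert_1^{d-1})$. Since $\OO_X(\zeros(s))=bA$, the sheaf sequence gives $Q\hookrightarrow H^0\!\big(\zeros(s),(\pmb{a}\cdot\pmb{L}+(b-c)A)\vert_{\zeros(s)}\big)$ and $\zeros(s)=b\,\zeros(s_A)$. A suitable filtration along this divisor, together with iterating the packing estimate, dominates $\ah(Q,\Vert\cdot\Vert_{\quot})$ by a sum of $O(b)$ contributions, the $j$‑th of the form $\ah(H^0(\zeros(s_A)_{\mathrm{red}},\mathcal{N}_j),\Vert\cdot\Vert_{\sup})+O(j\,\vert\pmb{a}\vert_1^{d-2})+O(\vert\pmb{a}\vert_1^{d-2}\log\vert\pmb{a}\vert_1)$: here the $\mathcal{N}_j$ are twists of $(\pmb{a}\cdot\pmb{L}+(b-c)A)\vert_{\zeros(s_A)_{\mathrm{red}}}$ by fixed sheaves, of "degree" $O(\vert\pmb{a}\vert_1)$, the ranks are $O(\vert\pmb{a}\vert_1^{d-2})$, the factor $j$ comes from Cauchy estimates for restricting holomorphic sections to the $j$‑th infinitesimal neighbourhood of $\zeros(s_A)_{\mathrm{red}}$, and the $\log\vert\pmb{a}\vert_1$ from the comparison of sup‑ and $L^2$‑norms (Gromov). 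Twisting each $\mathcal{N}_j$ by a fixed ample sheaf to make it positive and invoking the crude upper bound for $\ah$ on the $(d-1)$‑dimensional arithmetic variety $\zeros(s_A)_{\mathrm{red}}$ gives $\ah(H^0(\zeros(s_A)_{\mathrm{red}},\mathcal{N}_j),\Vert\cdot\Vert_{\sup})=O(\vert\pmb{a}\vert_1^{d-1})$; since $b\le\vert\pmb{a}\vert_1$ and $\sum_{j<b}j=O(b\vert\pmb{a}\vert_1)$, the total is $O(b\vert\pmb{a}\vert_1^{d-1})$.

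Combining the three steps and choosing $a'_0$ large enough that $\vert\pmb{a}\vert_1\ge a(1/2)$ and that every error estimate above fits under $C'b\vert\pmb{a}\vert_1^{d-1}+D'\vert\pmb{a}\vert_1^{d-1}\log\vert\pmb{a}\vert_1$ finishes the proof. I expect the cokernel estimate to be the main obstacle: one must control $\ah(Q,\Vert\cdot\Vert_{\quot})$ uniformly in $\pmb{a},b,c$ and, in particular, absorb the constants of size $C^{j}$ for $j$ up to $b$ that appear when differentiating holomorphic sections across the non‑reduced divisor $b\,\zeros(s_A)$ --- which is precisely what the hypothesis $b\le\vert\pmb{a}\vert_1$ makes possible. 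The remaining steps, although notationally heavy, go through as in \cite{MoCont} once Corollary~\ref{cor:comp:sub:L2:ball} is available.
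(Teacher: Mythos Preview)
Your proposal is correct and takes essentially the same approach as the paper: the same reductions (to $\overline{A}$ and each $\overline{L}_i+\overline{A}$ positive, $A$ effective with a small section, and the specific volume form $\Phi$), the same exact-sequence decomposition via multiplication by $s^b$, Corollary~\ref{cor:comp:sub:L2:ball} for the sub-norm piece (this is the paper's Lemma~\ref{lem:thm:h:0:estimate:big:1}), and a filtration along the infinitesimal neighbourhoods of $\zeros(s_A)$ for the quotient piece (Lemma~\ref{lem:thm:h:0:estimate:big:2}). The only place the paper differs from your sketch in detail is the quotient-norm comparison: rather than ``Cauchy estimates,'' it compares the $s^k$-sub norm with the $L^2$-norm by restricting to an open set $U\subset X(\CC)$ disjoint from $Y'(\CC)$ where $|s_A|$ is bounded below (invoking \cite[Lemma~1.4]{MoCont}), and it arranges in advance that $\zeros(s_A)$ be generically smooth and that small sections $t_i\in H^0(nA-L_i)$ nonvanishing on its components exist --- these are precisely what make your ``crude upper bound'' on the divisor work.
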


First of all,
let us see 
\[
\begin{cases}
\text{Theorem~\ref{thm:h:0:estimate:big:>=:0} $\Longrightarrow$
Theorem~\ref{thm:h:0:estimate:big:main}}, \\
\text{Theorem~\ref{thm:h:0:estimate:big:main:L:2} $\Longrightarrow$
Theorem~\ref{thm:h:0:estimate:big:>=:0}},
\end{cases}
\]
so that it is sufficient to show Theorem~\ref{thm:h:0:estimate:big:main:L:2}.

\begin{proof}[Theorem~\ref{thm:h:0:estimate:big:>=:0} $\Longrightarrow$ Theorem~\ref{thm:h:0:estimate:big:main}]
For $\pmb{\epsilon} \in \{ \pm 1 \}^r$ and $\pmb{a} \in \ZZ^r$,
we set 
\[
\overline{\pmb{L}}(\pmb{\epsilon}) =
(\pmb{\epsilon}(1) \overline{L}_1, \ldots, \pmb{\epsilon}(r) \overline{L}_r)\quad\text{and}\quad
\pmb{a}(\pmb{\epsilon}) = (\pmb{\epsilon}(1) \pmb{a}(1), \ldots, \pmb{\epsilon}(r) \pmb{a}(r)).
\]
By Theorem~\ref{thm:h:0:estimate:big:>=:0}, for each $\pmb{\epsilon} \in \{ \pm 1 \}^r$,
there are positive constants
$a_0(\pmb{\epsilon})$, $C(\pmb{\epsilon})$ and $D(\pmb{\epsilon})$ depending only on
$X$, $\overline{\pmb{L}}(\pmb{\epsilon})$ and
$\overline{A}$ such that
\begin{multline*}
\ah\left(H^0(\pmb{a} \cdot \pmb{L}(\pmb{\epsilon}) + (b-c)A),
\Vert\cdot\Vert^{\pmb{a} \cdot \overline{\pmb{L}}(\pmb{\epsilon}) + (b-c) \overline{A}}_{\sup}\right) \\
\leq
\ah\left(H^0(\pmb{a}\cdot \pmb{L}(\pmb{\epsilon}) - cA), 
\Vert\cdot\Vert^{\pmb{a} \cdot \overline{\pmb{L}}(\pmb{\epsilon}) - c \overline{A}}_{\sup}\right) \\
+ C(\pmb{\epsilon}) b \vert \pmb{a} \vert_{1}^{d-1} + D(\pmb{\epsilon}) \vert \pmb{a} \vert_{1}^{d-1} \log(\vert \pmb{a} \vert_{1})
\end{multline*}
for all $\pmb{a} \in \ZZ_{\geq 0}^r$ and $b,c \in \ZZ$
with $\vert \pmb{a} \vert_{1} \geq b \geq c \geq 0$ and
$\vert \pmb{a} \vert_{1} \geq a_0(\pmb{\epsilon})$.
Note that, for any $\pmb{a} \in \ZZ^r$, there is $\pmb{\epsilon} \in \{ \pm 1 \}^r$
with $\pmb{a}(\pmb{\epsilon}) \in \ZZ_{\geq 0}^r$, and that
$\pmb{a}(\pmb{\epsilon}) \cdot \overline{\pmb{L}}(\pmb{\epsilon}) =
\pmb{a} \cdot \overline{\pmb{L}}$ and $\vert \pmb{a}(\pmb{\epsilon}) \vert_1 =
\vert \pmb{a} \vert_1$ for $\pmb{\epsilon} \in \{ \pm 1 \}^r$ and $\pmb{a} \in \ZZ^r$.
Thus, if we set 
\[
a_0 = \max_{\pmb{\epsilon} \in \{\pm 1 \}^r} \{ a_0(\pmb{\epsilon}) \},\quad
C  = \max_{\pmb{\epsilon} \in \{\pm 1 \}^r} \{ C(\pmb{\epsilon}) \}\quad\text{and}\quad
D = \max_{\pmb{\epsilon} \in \{\pm 1 \}^r} \{ D(\pmb{\epsilon}) \},
\]
then Theorem~~\ref{thm:h:0:estimate:big:main} follows.
\end{proof}

\begin{proof}[Theorem~\ref{thm:h:0:estimate:big:main:L:2} $\Longrightarrow$ Theorem~\ref{thm:h:0:estimate:big:>=:0}]
Since
$\Vert\cdot\Vert^{\pmb{a}\cdot\overline{\pmb{L}} + (b-c)\overline{A}}_{\sup}
\geq \Vert\cdot\Vert^{\pmb{a}\cdot\overline{\pmb{L}} + (b-c)\overline{A}}_{L^2}$, we have
\begin{multline*}
\ah\left(H^0(\pmb{a}\cdot\pmb{L} + (b-c)A),
\Vert\cdot\Vert^{\pmb{a}\cdot\overline{\pmb{L}} + (b-c)\overline{A}}_{\sup}\right) \\
\leq
\ah\left(H^0(\pmb{a}\cdot\pmb{L} + (b-c)A),
\Vert\cdot\Vert^{\pmb{a}\cdot\overline{\pmb{L}} + (b-c)\overline{A}}_{L^2}\right).
\end{multline*}
Moreover, applying Gromov's inequality (cf. \cite[Corollary~1.1.2]{MoCont}) to
$\overline{L}_{1\CC},\ldots,\overline{L}_{r\CC}, -\overline{A}_{\CC}$,
there is a constant $D \geq 1$ such that
\[
\Vert\cdot\Vert^{\pmb{a}\cdot \overline{\pmb{L}} - c \overline{A}}_{L^2} \geq
D^{-1} (\vert \pmb{a}\vert_{1}+c+1)^{-(d-1)} \Vert\cdot\Vert^{\pmb{a}\cdot \overline{\pmb{L}} - c \overline{A}}_{\sup}
\]
for all $\pmb{a} \in \ZZ_{\geq 0}^r$ and $c \in \ZZ_{\geq 0}$.
Therefore, since $\vert \pmb{a} \vert_{1} \geq c$, by using
\cite[Proposition~2,1]{MoCont}, we obtain
\begin{multline*}
\ah\left(H^0(\pmb{a}\cdot \pmb{L} - c A), 
\Vert\cdot\Vert^{\pmb{a}\cdot \overline{\pmb{L}} - c \overline{A}}_{L^2}\right) \\
\leq \ah\left(H^0(\pmb{a}\cdot \pmb{L} - c A), 
D^{-1} (\vert \pmb{a} \vert_{1}+c+1)^{-(d-1)} \Vert\cdot\Vert^{\pmb{a}\cdot \overline{\pmb{L}} - c \overline{A}}_{\sup}\right) \\
\leq \ah\left(H^0(\pmb{a}\cdot \pmb{L} - c A), 
\Vert\cdot\Vert^{\pmb{a}\cdot \overline{\pmb{L}} - c \overline{A}}_{\sup}\right) \\
+  \log(D(2\vert \pmb{a} \vert_{1}+ 1)^{d-1}) C_1 \vert \pmb{a} \vert_{1}^{d-1} + C_2 \vert \pmb{a}\vert_{1}^{d-1} \log(\vert \pmb{a} \vert_{1}),
\end{multline*}
where $C_1$ and $C_2$ are positive constants with the following properties:
\addtocounter{Claim}{1}
\begin{equation}
\label{eqn:L2:imply:sup}
\begin{cases}
\rank H^0(a_1(L_1+A) + \cdots + a_r(L_r+A)) 
\leq C_1 \vert \pmb{a} \vert_{1}^{d-1} \quad(\vert \pmb{a} \vert_{1} \geq 1), \\
\log(18) \rank H^0(a_1(L+A)+\cdots + a_r(L_r+A)) \\
\qquad + 2 \log\left( (\rank H^0(a_1(L_1+A)+ \cdots + a_r(L_r + A)))! \right) \\
\phantom{\log(18) \rank H^0(a_1(L_1+A)+\cdots +}
\leq C_2 \vert \pmb{a} \vert_{1}^{d-1} \log(\vert \pmb{a} \vert_{1})
\quad(\vert \pmb{a} \vert_{1} \geq 2).
\end{cases}
\end{equation}
Thus we get our assertion.
\end{proof}

\begin{proof}[The proof of Theorem~\ref{thm:h:0:estimate:big:main:L:2}]
First let us see the following claim.

\begin{Claim}
\label{claim:thm:h:0:estimate:big:1-1}
Let $\overline{A}'$ be another $C^{\infty}$-hermitian
invertible sheaf on $X$ with
$\overline{A} \leq \overline{A}'$.
If the theorem holds for
$\overline{\pmb{L}}$ and $\overline{A}'$,
then it also holds for $\overline{\pmb{L}}$ and $\overline{A}$.
\end{Claim}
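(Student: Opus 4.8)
The plan is to transport the assumed $L^{2}$-estimate for $(\overline{\pmb{L}},\overline{A}')$ down to $(\overline{\pmb{L}},\overline{A})$ by multiplying global sections by a suitable power of a small section of $\overline{A}'-\overline{A}$, while keeping the chosen volume form on $X(\CC)$ fixed so that all $L^{2}$-norms are taken with respect to it. By the definition of $\overline{A}\leq\overline{A}'$ there is a non-zero section $\eta\in H^{0}(X,A'-A)$ with $\Vert\eta\Vert_{\sup}\leq 1$. Since $X$ is integral and the sheaves involved are torsion-free, multiplication by any positive power of $\eta$ is injective on global sections; and since $\vert\eta\vert\leq\Vert\eta\Vert_{\sup}\leq 1$ pointwise on $X(\CC)$, it is norm-nonincreasing for the relevant $L^{2}$-norms.

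Concretely, for $\pmb{a}\in\ZZ_{\geq 0}^{r}$ and $b\geq c\geq 0$, I would use the two homomorphisms
\[
H^{0}(\pmb{a}\cdot\pmb{L}+(b-c)A)\overset{\otimes\,\eta^{\,b-c}}{\longrightarrow}H^{0}(\pmb{a}\cdot\pmb{L}+(b-c)A'),
\qquad
H^{0}(\pmb{a}\cdot\pmb{L}-cA')\overset{\otimes\,\eta^{\,c}}{\longrightarrow}H^{0}(\pmb{a}\cdot\pmb{L}-cA)
\]
(the second being legitimate because $-cA'+c(A'-A)=-cA$). Each is injective and carries the set of global sections of $L^{2}$-norm $\leq 1$ into the corresponding set, so that
\begin{multline*}
\ah\left(H^{0}(\pmb{a}\cdot\pmb{L}+(b-c)A),\Vert\cdot\Vert^{\pmb{a}\cdot\overline{\pmb{L}}+(b-c)\overline{A}}_{L^{2}}\right)\\
\leq\ah\left(H^{0}(\pmb{a}\cdot\pmb{L}+(b-c)A'),\Vert\cdot\Vert^{\pmb{a}\cdot\overline{\pmb{L}}+(b-c)\overline{A}'}_{L^{2}}\right)
\end{multline*}
and
\begin{multline*}
\ah\left(H^{0}(\pmb{a}\cdot\pmb{L}-cA'),\Vert\cdot\Vert^{\pmb{a}\cdot\overline{\pmb{L}}-c\overline{A}'}_{L^{2}}\right)\\
\leq\ah\left(H^{0}(\pmb{a}\cdot\pmb{L}-cA),\Vert\cdot\Vert^{\pmb{a}\cdot\overline{\pmb{L}}-c\overline{A}}_{L^{2}}\right).
\end{multline*}

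It then remains only to concatenate these two inequalities with the assumed estimate for $(\overline{\pmb{L}},\overline{A}')$: if $a_{0}'$, $C'$, $D'$ are the constants it provides (depending only on $X$, $\overline{\pmb{L}}$, $\overline{A}'$ and the fixed volume form), then for all $\pmb{a}\in\ZZ_{\geq 0}^{r}$ and $b,c\in\ZZ$ with $\vert\pmb{a}\vert_{1}\geq b\geq c\geq 0$ and $\vert\pmb{a}\vert_{1}\geq a_{0}'$, the first inequality bounds the left-hand side of Theorem~\ref{thm:h:0:estimate:big:main:L:2} for $\overline{A}$ by that for $\overline{A}'$; the estimate for $\overline{A}'$ bounds the latter by $\ah(H^{0}(\pmb{a}\cdot\pmb{L}-cA'),\Vert\cdot\Vert^{\pmb{a}\cdot\overline{\pmb{L}}-c\overline{A}'}_{L^{2}})+C'b\vert\pmb{a}\vert_{1}^{d-1}+D'\vert\pmb{a}\vert_{1}^{d-1}\log(\vert\pmb{a}\vert_{1})$; and the second inequality replaces that leading term by $\ah(H^{0}(\pmb{a}\cdot\pmb{L}-cA),\Vert\cdot\Vert^{\pmb{a}\cdot\overline{\pmb{L}}-c\overline{A}}_{L^{2}})$. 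So the estimate holds for $(\overline{\pmb{L}},\overline{A})$ with $a_{0}=a_{0}'$, $C=C'$, $D=D'$; once a choice of $\overline{A}'$ and of $\eta$ is fixed, these depend only on $X$, $\overline{\pmb{L}}$, $\overline{A}$ and the volume form, as required.

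I expect no genuine obstacle here; the only points needing care are that $b-c\geq 0$ and $c\geq 0$, so that $\eta^{\,b-c}$ and $\eta^{\,c}$ are honest sections; that the mere non-vanishing of $\eta$ (rather than nowhere-vanishing) already suffices for injectivity on global sections, since $X$ is integral and the relevant sheaves are torsion-free; and that one must not alter the volume form when invoking Theorem~\ref{thm:h:0:estimate:big:main:L:2} for $\overline{A}'$, so that the supnorm bound $\vert\eta\vert\leq 1$ really does make the multiplication maps $L^{2}$-norm-nonincreasing.
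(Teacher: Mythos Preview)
Your proof is correct and is essentially the same as the paper's, just spelled out in full detail: the paper dismisses the claim in one line by noting $\pmb{a}\cdot\overline{\pmb{L}}+(b-c)\overline{A}\leq\pmb{a}\cdot\overline{\pmb{L}}+(b-c)\overline{A}'$ and $\pmb{a}\cdot\overline{\pmb{L}}-c\overline{A}'\leq\pmb{a}\cdot\overline{\pmb{L}}-c\overline{A}$, and your multiplication maps $\otimes\,\eta^{\,b-c}$ and $\otimes\,\eta^{\,c}$ are precisely what make those two hermitian inequalities yield the required $\ah$ inequalities for the $L^2$-norms.
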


\begin{proof}
This is obvious because
$\pmb{a} \cdot \overline{\pmb{L}} + (b-c) \overline{A} \leq 
\pmb{a} \cdot \overline{\pmb{L}} + (b-c) \overline{A}'$ and
$\pmb{a} \cdot \overline{\pmb{L}} - c \overline{A}' \leq 
\pmb{a}\cdot \overline{\pmb{L}} - c \overline{A}$.
\end{proof}

By the above claim, we may assume the following:

\begin{enumerate}
\renewcommand{\labelenumi}{(\arabic{enumi})}
\item
$A$ is very ample on $X$.

\item
$\overline{A}$ and $\overline{L}_i + \overline{A}$
($i=1, \ldots, r$) are positive on $X(\CC)$.
\end{enumerate}
Moreover, we fix positive constants $C_1$ and $C_2$ as in \eqref{eqn:L2:imply:sup}.

\begin{Claim}
\label{claim:thm:h:0:estimate:big:1-2}
If the theorem holds for a volume form on $X(\CC)$,
then so does for any volume form.
\end{Claim}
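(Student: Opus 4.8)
The plan is to compare the $L^2$-norms attached to two volume forms and to absorb the resulting discrepancy into the error terms $C'b\vert\pmb{a}\vert_1^{d-1}$ and $D'\vert\pmb{a}\vert_1^{d-1}\log(\vert\pmb{a}\vert_1)$ by slightly enlarging the constants.

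Suppose the theorem holds for a volume form $\Phi_0$ on $X(\CC)$ with constants $a'_0, C', D'$, and let $\Phi$ be an arbitrary volume form. Write $\Phi = \psi\,\Phi_0$ with $\psi \in C^{\infty}(X)$ and $\psi > 0$. Since $X(\CC)$ is compact, there is a constant $c_0 > 0$ with $e^{-c_0} \leq \psi \leq e^{c_0}$, and hence, for every $C^{\infty}$-hermitian invertible sheaf $\overline{H}$ on $X$ and every $s \in H^0(X, H)$,
\[
e^{-c_0/2}\,\Vert s \Vert^{\overline{H}}_{L^2, \Phi_0} \leq \Vert s \Vert^{\overline{H}}_{L^2, \Phi} \leq e^{c_0/2}\,\Vert s \Vert^{\overline{H}}_{L^2, \Phi_0}.
\]
In particular the two-sided inequality holds for the norms $\Vert\cdot\Vert^{\pmb{a}\cdot\overline{\pmb{L}}+(b-c)\overline{A}}_{L^2}$ and $\Vert\cdot\Vert^{\pmb{a}\cdot\overline{\pmb{L}}-c\overline{A}}_{L^2}$ defined by $\Phi$ and by $\Phi_0$.

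Next I would combine the obvious monotonicity of $\ah$ in the norm with the comparison inequality of \cite[Proposition~2.1]{MoCont} governing the effect of rescaling a norm by a constant. Since $\Vert\cdot\Vert_{L^2,\Phi} \geq e^{-c_0/2}\Vert\cdot\Vert_{L^2,\Phi_0}$ and $\Vert\cdot\Vert_{L^2,\Phi_0} \geq e^{-c_0/2}\Vert\cdot\Vert_{L^2,\Phi}$, I would pass from $\Phi$ to $\Phi_0$ in the left-hand term $\ah(H^0(\pmb{a}\cdot\pmb{L}+(b-c)A),\Vert\cdot\Vert^{\pmb{a}\cdot\overline{\pmb{L}}+(b-c)\overline{A}}_{L^2,\Phi})$, apply the assumed estimate for $\Phi_0$, and then pass back from $\Phi_0$ to $\Phi$ in the right-hand term $\ah(H^0(\pmb{a}\cdot\pmb{L}-cA),\Vert\cdot\Vert^{\pmb{a}\cdot\overline{\pmb{L}}-c\overline{A}}_{L^2,\Phi_0})$. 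This yields the desired inequality for $\Phi$ up to an extra error which, by \cite[Proposition~2.1]{MoCont} applied with the constant scaling factor $e^{c_0/2}$, is bounded by $(c_0/2)(\rank H^0(\pmb{a}\cdot\pmb{L}+(b-c)A) + \rank H^0(\pmb{a}\cdot\pmb{L}-cA))$ plus the corresponding $\log(18)\rank + 2\log(\rank!)$-type terms for those two $\ZZ$-modules.

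It then remains to check that this extra error is harmless. As $\pmb{a}\in\ZZ_{\geq 0}^r$, $0 \leq b-c \leq b \leq \vert\pmb{a}\vert_1 = a_1+\cdots+a_r$, and $A$ is very ample (hence effective), one has $h^0(\pmb{a}\cdot\pmb{L}+(b-c)A) \leq h^0(\pmb{a}\cdot\pmb{L}+\vert\pmb{a}\vert_1 A) = h^0(a_1(L_1+A)+\cdots+a_r(L_r+A))$ and, likewise, $h^0(\pmb{a}\cdot\pmb{L}-cA) \leq h^0(a_1(L_1+A)+\cdots+a_r(L_r+A))$. Hence, by the choice of $C_1$ and $C_2$ in \eqref{eqn:L2:imply:sup} together with the monotonicity of $n \mapsto \log(18)n + 2\log(n!)$, the extra error is at most $c_0 C_1 \vert\pmb{a}\vert_1^{d-1} + 2 C_2 \vert\pmb{a}\vert_1^{d-1}\log(\vert\pmb{a}\vert_1)$ for $\vert\pmb{a}\vert_1 \geq 2$. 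Thus the theorem holds for $\Phi$ with $C'' = C'$, $D'' = D' + c_0 C_1 + 2C_2$ and $a''_0 = \max(a'_0, 2)$. The argument is entirely routine; the only step requiring care is precisely this bookkeeping — verifying that a constant-factor change of norm costs only an $O(\vert\pmb{a}\vert_1^{d-1}\log\vert\pmb{a}\vert_1)$ term, which is absorbed by the error already present in the estimate.
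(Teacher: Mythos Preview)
Your proposal is correct and follows essentially the same approach as the paper: compare the two volume forms by a bounded positive ratio, translate this into a constant-factor comparison of the $L^2$-norms, and then use \cite[Proposition~2.1]{MoCont} together with the rank bound $\rank H^0(\pmb{a}\cdot\pmb{L}+(b-c)A),\ \rank H^0(\pmb{a}\cdot\pmb{L}-cA)\leq \rank H^0(a_1(L_1+A)+\cdots+a_r(L_r+A))$ (via the constants $C_1,C_2$ of \eqref{eqn:L2:imply:sup}) to absorb the discrepancy into the error terms. The only nitpick is in your final bookkeeping: the term $c_0C_1\vert\pmb{a}\vert_1^{d-1}$ has no $\log$ factor, so folding it into $D''$ as written requires $\log\vert\pmb{a}\vert_1\geq 1$, i.e.\ $a_0''\geq 3$ rather than $2$ (or else divide by $\log 2$); this is immaterial to the argument.
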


\begin{proof}
We assume that the theorem
holds for a volume form $\Phi$ on $X(\CC)$.
Let $\Phi'$ be another volume form on $X(\CC)$.
Then there are constants $0 < \sigma_0 < 1$ and $\sigma_1 > 1$
with
\[
\sigma^2_0 \Phi' \leq \Phi \leq \sigma^2_1\Phi'.
\]
Thus
\[
\Vert\cdot\Vert_{L^2}^{\pmb{a} \cdot \overline{\pmb{L}} + (b-c) \overline{A}, \Phi} \leq \sigma_1\Vert\cdot\Vert_{L^2}^{\pmb{a} \cdot \overline{\pmb{L}} + (b-c) \overline{A}, \Phi'}\quad\text{and}\quad
\sigma_0 \Vert\cdot\Vert_{L^2}^{\pmb{a} \cdot \overline{\pmb{L}} -c \overline{A}, \Phi'} \leq
\Vert\cdot\Vert_{L^2}^{\pmb{a} \cdot \overline{\pmb{L}} - c \overline{A}, \Phi}.
\]
Therefore we have
\begin{multline*}
\ah\left(H^0(\pmb{a} \cdot \pmb{L} + (b-c)A), \sigma_1
\Vert\cdot\Vert^{\pmb{a} \cdot \overline{\pmb{L}} + (b-c) \overline{A},\Phi'}_{L^2}\right)  \\
\leq
\ah\left(H^0(\pmb{a} \cdot \pmb{L} + (b-c)A),
\Vert\cdot\Vert^{\pmb{a} \cdot \overline{\pmb{L}} + (b-c) \overline{A},\Phi}_{L^2}\right) 
\end{multline*}
and
\[
\ah\left(H^0(\pmb{a} \cdot \pmb{L} - cA), 
\Vert\cdot\Vert^{\pmb{a}\cdot \overline{\pmb{L}} - c \overline{A},\Phi}_{L^2}\right) \leq
\ah\left(H^0(\pmb{a}\cdot \pmb{L} - cA), \sigma_0
\Vert\cdot\Vert^{\pmb{a} \cdot \overline{\pmb{L}} - c \overline{A},\Phi'}_{L^2}\right).
\]
Note that
\[
\begin{cases}
\rank H^0(\pmb{a} \cdot \pmb{L} + (b-c)A) 
\leq \rank H^0(a_1(L_1+A) + \cdots + a_r(L_r + A))\\
\rank H^0(\pmb{a}\cdot \pmb{L} -c A) \leq \rank H^0(a_1(L_1+A)+\cdots + a_r(L_r + A)).
\end{cases}
\]
Then, by \cite[Proposition~2.1]{MoCont},
\begin{multline*}
\ah\left(H^0(\pmb{a} \cdot \pmb{L} + (b-c)A), 
\Vert\cdot\Vert^{\pmb{a} \cdot \overline{\pmb{L}} + (b-c) \overline{A},\Phi'}_{L^2}\right)  \\
\leq
\ah\left(H^0(\pmb{a}\cdot \pmb{L} + (b-c)A), \sigma_1
\Vert\cdot\Vert^{\pmb{a}\cdot \overline{\pmb{L}} + (b-c) \overline{A},\Phi'}_{L^2}\right) \\
+ \log(\sigma_1) C_1\vert \pmb{a}\vert_{1}^{d-1} + C_2 \log(\vert \pmb{a}\vert_{1}) \vert \pmb{a} \vert_{1}^{d-1}
\end{multline*}
\begin{multline*}
\ah\left(H^0(\pmb{a} \cdot \pmb{L} - cA), \sigma_0
\Vert\cdot\Vert^{\pmb{a}\cdot \overline{\pmb{L}} - c \overline{A},\Phi'}_{L^2}\right)
\leq \ah\left(H^0(\pmb{a} \cdot \pmb{L} - cA), 
\Vert\cdot\Vert^{\pmb{a}\cdot \overline{\pmb{L}} - c \overline{A},\Phi'}_{L^2}\right) \\
+\log(\sigma_0^{-1}) C_1 \vert \pmb{a}\vert_{1}^{d-1}  + C_2 \log(\vert \pmb{a} \vert_{1}) \vert \pmb{a} \vert_{1}^{d-1}.
\end{multline*}
Thus we get the claim.
\end{proof}

Therefore, we may assume that a volume form  $\Phi$ on $X(\CC)$
is given by
\[
\Phi = c_1(\overline{L}_1 + \cdots + \overline{L}_r + r\overline{A})^{d-1}.
\]
Here we fix a notation:
For a real number $\lambda$,
we set
\[
\overline{A}^{\lambda} = \overline{A} + (\OO_X, \exp(-\lambda)\vert\cdot\vert_{can}).
\]
Let us see the following claim.

\begin{Claim}
\label{claim:thm:h:0:estimate:big:2}
We may assume the following:
\begin{enumerate}
\renewcommand{\labelenumi}{\rom{(\alph{enumi})}}
\item
There is a non-zero small section $s$ of $A$ such that
$\zeros(s)$ is smooth over $\QQ$.
\rom{(}In the case where $d=1$, $\zeros(s)$ is empty on $X_{\QQ}$.\rom{)}

\item
We can find a positive integer $n$ with the following property:
For each $i=1, \ldots, r$, there is a non-zero small section
$t_i$ of $nA - L_i$ such that $t_i$ is non-zero on every
irreducible component of $\zeros(s)$.
\end{enumerate}
\end{Claim}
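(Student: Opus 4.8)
The plan is to reduce to the situation where (a) and (b) hold by replacing $\overline{A}$ with $\overline{A}^{\lambda}$ for a sufficiently large $\lambda \geq 0$ and then appealing to Claim~\ref{claim:thm:h:0:estimate:big:1-1}. The point is that passing from $\overline{A}$ to $\overline{A}^{\lambda}$ changes neither the underlying invertible sheaf $A$ nor the curvature form $c_1(\overline{A})$, so the reductions already in force — $A$ very ample, $\overline{A}$ and $\overline{L}_i + \overline{A}$ positive on $X(\CC)$, and the normalization $\Phi = c_1(\overline{L}_1 + \cdots + \overline{L}_r + r\overline{A})^{d-1}$ — remain valid verbatim (in particular Claim~\ref{claim:thm:h:0:estimate:big:1-2} is not needed here). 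At the same time, the sup-norm on $n\overline{A}^{\lambda} - \overline{L}_i$ (resp.\ on $\overline{A}^{\lambda}$) is $e^{-n\lambda}$ (resp.\ $e^{-\lambda}$) times the sup-norm on $n\overline{A} - \overline{L}_i$ (resp.\ on $\overline{A}$), so any fixed global section becomes small once $\lambda$ is taken large enough.

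First I would produce the sections over $\ZZ$, ignoring smallness. Since $A$ is very ample and $X$ is generically smooth, a Bertini argument applied to $|A_{\QQ}|$ on $X_{\QQ}$ — exactly as in the proof of \cite[Theorem~3.1]{MoCont} — yields a nonzero $s \in H^0(X, A)$ with $\zeros(s)$ smooth over $\QQ$; when $d = 1$, $\dim X_{\QQ} = 0$ and one instead chooses $s$ with $s_{\QQ}$ nowhere vanishing, i.e.\ $\zeros(s)$ empty on $X_{\QQ}$. Next, because $\zeros(s)$ has only finitely many irreducible components and $A$ is ample, I would fix a positive integer $n$ with $nA - L_i$ very ample for every $i$; avoiding a given component is then a codimension $\geq 1$ condition, and a standard argument (again as in \cite[Theorem~3.1]{MoCont}) produces, after enlarging $n$ if necessary, a nonzero $t_i \in H^0(X, nA - L_i)$ not vanishing identically on any irreducible component of $\zeros(s)$, for each $i = 1, \ldots, r$.

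Finally I would choose $\lambda \geq 0$ so large that $e^{-\lambda}\Vert s \Vert_{\sup} \leq 1$ and $e^{-n\lambda}\Vert t_i \Vert_{\sup} \leq 1$ for all $i$, the sup-norms being those of $\overline{A}$ and of $n\overline{A} - \overline{L}_i$. Then $s$ is a small section of $A$ for $\overline{A}^{\lambda}$ and each $t_i$ a small section of $nA - L_i$ for $n\overline{A}^{\lambda} - \overline{L}_i$, so (a) and (b) hold for $\overline{\pmb{L}}$ and $\overline{A}^{\lambda}$; since $\overline{A} \leq \overline{A}^{\lambda}$, Claim~\ref{claim:thm:h:0:estimate:big:1-1} then reduces the assertion for $\overline{\pmb{L}}, \overline{A}$ to the assertion for $\overline{\pmb{L}}, \overline{A}^{\lambda}$. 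The part I expect to require the most care is not this bookkeeping but the two geometric selections — forcing $\zeros(s)$ to be smooth over $\QQ$ by Bertini, and arranging the $t_i$ to meet every irreducible component of $\zeros(s)$ — together with the routine but necessary check that substituting $\overline{A}^{\lambda}$ for $\overline{A}$ really leaves reductions (1), (2) and the chosen volume form untouched.
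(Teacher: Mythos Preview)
Your proposal is correct and follows essentially the same route as the paper: first produce $s$ (via Bertini/very ampleness) and the $t_i$ (the paper cites \cite[Lemma~3.2]{MoCont} rather than very ampleness of $nA-L_i$, but this is cosmetic), then scale the metric by $\lambda$ to make them small, and finally invoke Claim~\ref{claim:thm:h:0:estimate:big:1-1} together with the invariance of $c_1$ under $\overline{A}\mapsto\overline{A}^{\lambda}$. Your explicit check that conditions (1), (2) and the volume form survive the substitution matches the paper's one-line observation to the same effect.
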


\begin{proof}
Since $A$ is very ample,
there is a non-zero section $s$ of
$A$ such that $\zeros(s)$ is smooth over $\QQ$.
Moreover, by \cite[Lemma~3.2]{MoCont},
we can find a positive integer $n$ with the following property:
For each $i=1, \ldots, r$,
there is a non-zero section $t_i$
of $nA - L_i$ such that
$t_i$ is non-zero on every irreducible component of $\zeros(s)$.
Let $\lambda$ be a non-negative real number
with
$\exp(-\lambda) \Vert s \Vert_{\sup} \leq 1$ and 
$\exp(-n\lambda)\Vert t_i \Vert_{\sup} \leq 1$ for $i=1,\ldots,r$.
Then $s$ and $t_i$ are small sections of
$\overline{A}^{\lambda}$ and 
$n \overline{A}^{\lambda} - \overline{L}_i$ respectively.
On the other hand,
$\overline{A} \leq \overline{A}^{\lambda}$, and
$\overline{A}^{\lambda}$ satisfy the conditions (1) and (2).
Moreover,
\[
c_1(\overline{L}_1 + \cdots + \overline{L}_r + r\overline{A}) = 
c_1(\overline{L}_1 + \cdots + \overline{L}_r + r\overline{A}^{\lambda}).
\]
Thus, by Claim~\ref{claim:thm:h:0:estimate:big:1-1},
we get our claim.
\end{proof}

\medskip
For a coherent sheaf ${\mathcal F}$ on $X$ and
a closed subscheme $Z$ of $X$,
the image of the natural homomorphism
\[
H^i(X, {\mathcal F}) \to H^i(Z, \rest{{\mathcal F}}{Z})
\]
is denoted by $I^i(Z, \rest{{\mathcal F}}{Z})$ or
$I^i(\rest{{\mathcal F}}{Z})$ for simplicity.

Let us start the proof of Theorem~\ref{thm:h:0:estimate:big:main:L:2}.
If $b = 0$, then $c=0$.
In this case, the assertion of the theorem is obvious,
so that we may assume that $b \geq 1$.
As in Claim~\ref{claim:thm:h:0:estimate:big:2}, 
let $s$ be a non-zero small section of $A$ such that
$Y := \zeros(s)$ is smooth over $\QQ$.

Here we fix constants $C_3$ and $C_4$ as follows:
\[
\begin{cases}
\rank H^0(Y, \rest{a_1(L_1+A) + \cdots + a_r(L_r + A)}{Y}) 
\leq C_3 \vert \pmb{a} \vert_{1}^{d-2}\qquad(\vert \pmb{a} \vert_{1} \geq 1) \\
\log(18) \rank H^0(Y, \rest{a_1(L_1+A) + \cdots + a_r(L_r + A)}{Y}) \\
\qquad + 2 \log\left( (\rank H^0(Y, \rest{a_1(L_1+A) + \cdots + a_r(L_r+A)}{Y})) !\right) \\
\phantom{\log(18) \rank H^0(a(L+A) + 2 \log (\rank H^0}
\leq C_4 \vert \pmb{a}\vert_{1}^{d-2} \log(\vert \pmb{a} \vert_{1}) 
\qquad(\vert \pmb{a} \vert_{1} \geq 2)
\end{cases}
\]
In the case where $d=1$,
$\rank H^0(Y, \rest{a_1(L_1+A) + \cdots + a_r(L_r+A)}{Y}) = 0$.

Let $\Vert\cdot\Vert_{L^2,\quot}^{\pmb{a}\cdot\overline{\pmb{L}}+(b-c)\overline{A}}$ be
the quotient norm of $I^0(\rest{\pmb{a}\cdot\pmb{L}+(b-c)A}{bY})$ induced by
the surjective homomorphism
$H^0(\pmb{a}\cdot\pmb{L}+(b-c)A) \to I^0(\rest{\pmb{a}\cdot\pmb{L}+(b-c)A}{bY})$ and
$L^2$-norm 
$\Vert\cdot\Vert_{L^2}^{\pmb{a}\cdot\overline{\pmb{L}}+(b-c)\overline{A}}$
of $H^0(\pmb{a}\cdot\pmb{L} + (b-c)A)$.
An exact sequence
\[
0 \to H^0(\pmb{a} \cdot \pmb{L} - cA) \overset{s^b}{\longrightarrow} H^0(\pmb{a}\cdot\pmb{L}+(b-c)A) \to I^0(\rest{\pmb{a}\cdot\pmb{L}+(b-c)A}{bY}) \to 0
\]
gives rise to an exact sequence of normed $\ZZ$-modules:
\begin{multline*}
0 \to \left(H^0(\pmb{a} \cdot \pmb{L}-cA), \Vert\cdot\Vert_{L^2, s^b, \sub}^{\pmb{a} \cdot \overline{\pmb{L}}+(b-c)\overline{A}}\right) 
\to
\left(H^0(\pmb{a}\cdot\pmb{L}+(b-c)A), \Vert\cdot\Vert_{L^2}^{\pmb{a}\cdot\overline{\pmb{L}}+(b-c)\overline{A}}\right) \\
\to \left(I^0(\rest{\pmb{a}\cdot\pmb{L}+(b-c)A}{bY}), \Vert\cdot\Vert_{L^2,\quot}^{\pmb{a}\cdot\overline{\pmb{L}}+(b-c)\overline{A}}\right) \to 0.
\end{multline*}
Since
$\rank H^0(\pmb{a}\cdot\pmb{L}-cA) \leq \rank H^0(a_1(L_1+A) + \cdots + a_r(L_r + A))$, by \cite[(4) of Proposition~2.1]{MoCont},
the above exact sequence yields
\begin{multline}
\label{thm:h:0:estimate:big:eqn:1}
\ah\left(H^0(\pmb{a} \cdot \pmb{L}+ (b-c)A), \Vert\cdot\Vert_{L^2}^{\pmb{a}\cdot\overline{\pmb{L}}+ (b-c)\overline{A}}\right) \leq
\ah\left(H^0(\pmb{a}\cdot\pmb{L} - cA), \Vert\cdot\Vert_{L^2, s^b, \sub}^{\pmb{a}\cdot\overline{\pmb{L}}+(b-c)\overline{A}}\right)   \\
+ \ah\left(I^0(\rest{\pmb{a}\cdot\pmb{L}+(b-c)A}{bY}), \Vert\cdot\Vert_{L^2,\quot}^{\pmb{a}\cdot\overline{\pmb{L}}+(b-c)\overline{A}}\right) 
+ C_2 \vert\pmb{a}\vert_{1}^{d-1}\log(\vert\pmb{a}\vert_{1})
\end{multline}
for all $\pmb{a} \in \ZZ_{\geq 0}^r$ and
$b, c \in \ZZ_{\geq 0}$ with $\vert \pmb{a} \vert_{1} \geq b \geq c \geq 0$ and
$\vert \pmb{a} \vert_{1} \geq 2$.

\medskip
Here let us consider two lemmas.

\begin{Lemma}
\label{lem:thm:h:0:estimate:big:1}
There are constants $a_0$ and $C_5$ depending only on
$\overline{\pmb{L}}$ and $\overline{A}$ such that
\begin{multline*}
\ah\left(H^0(\pmb{a}\cdot\pmb{L} - cA), \Vert\cdot\Vert_{L^2, s^b,\sub}^{\pmb{a}\cdot\overline{\pmb{L}}+(b-c)\overline{A}}\right) \leq
\ah\left(H^0(\pmb{a}\cdot\pmb{L} - cA), \Vert\cdot\Vert_{L^2}^{\pmb{a}\cdot\overline{\pmb{L}} - c \overline{A}}\right) \\
+ C_5 b \vert \pmb{a}\vert_{1}^{d-1} + C_2 \vert \pmb{a}\vert_{1}^{d-1}\log(\vert \pmb{a}\vert_{1})
\end{multline*}
for all $\pmb{a} \in \ZZ_{\geq 0}^r$ and
$b, c \in \ZZ$ with
$\vert \pmb{a} \vert_{1} \geq b \geq c \geq 0$ and
$\vert \pmb{a} \vert_{1} \geq a_0$.
\end{Lemma}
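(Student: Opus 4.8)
The plan is to compare the two norms on $H^0(\pmb{a}\cdot\pmb{L}-cA)$ through their arithmetic Euler characteristics: Corollary~\ref{cor:comp:sub:L2:ball} bounds the volume difference between them, and then \cite[Proposition~2.1]{MoCont} lets one pass between $\ah$ and $\achi$ at a cost bounded by the terms appearing in the statement. We may assume $b\ge 1$, since $b=0$ forces $c=0$ and then the two norms coincide. Write $P=H^0(\pmb{a}\cdot\pmb{L}-cA)$ for brevity. I would apply Corollary~\ref{cor:comp:sub:L2:ball} with the fixed small section $s$ (as in Claim~\ref{claim:thm:h:0:estimate:big:2}) replaced by $s^b\in H^0(bA)$ (note $\Vert s^b\Vert_{\sup}\le 1$, that $s^b$ is non-zero on every connected component of $X(\CC)$, and recall we have already reduced to $\Phi=c_1(\overline{L}_1+\cdots+\overline{L}_r+r\overline{A})^{d-1}$ and that $\overline{A}$ and the $\overline{L}_i+\overline{A}$ are positive). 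Freezing $\epsilon=1/2$ once and for all, the corollary yields a constant $a(\epsilon)$ depending only on $\overline{\pmb{L}}$ and $\overline{A}$ with
\[
\gamma(\pmb{a},c,s^b)\ \ge\ \frac{(1+\epsilon)^{r+2}}{(1-\epsilon)^{r+1}}\,h^0\bigl(2\vert\pmb{a}\vert_1(L_1+\cdots+L_r+rA)\bigr)\,b\int_{X(\CC)}\log\vert s\vert\,\Phi
\]
for all $\pmb{a}\in\ZZ_{\ge 0}^r$ and $c\in\ZZ_{\ge 0}$ with $\vert\pmb{a}\vert_1\ge a(\epsilon)$, where $\gamma(\pmb{a},c,s^b)$ denotes the volume difference of $\bigl(P,\langle\,,\,\rangle_{\pmb{a}\cdot\overline{\pmb{L}}-c\overline{A}}\bigr)\overset{\operatorname{id}}{\longrightarrow}\bigl(P,\langle\,,\,\rangle_{\pmb{a}\cdot\overline{\pmb{L}}+(b-c)\overline{A},s^b,\sub}\bigr)$. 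Here $\int_{X(\CC)}\log\vert s\vert\,\Phi$ is a non-positive real number depending only on $\overline{\pmb{L}}$ and $\overline{A}$ (through $s$ and $\Phi$), and $2\vert\pmb{a}\vert_1(L_1+\cdots+L_r+rA)=(2\vert\pmb{a}\vert_1)(L_1+A)+\cdots+(2\vert\pmb{a}\vert_1)(L_r+A)$, so the first inequality of \eqref{eqn:L2:imply:sup} applied to the vector $(2\vert\pmb{a}\vert_1,\ldots,2\vert\pmb{a}\vert_1)$ bounds this $h^0$ by $C_1(2r)^{d-1}\vert\pmb{a}\vert_1^{d-1}$. Hence there is a constant $C_5\ge 0$, depending only on $\overline{\pmb{L}}$ and $\overline{A}$, with $\gamma(\pmb{a},c,s^b)\ge -C_5\,b\,\vert\pmb{a}\vert_1^{d-1}$.

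I would then translate this into the asserted inequality. By the volume-difference identity recalled just before Corollary~\ref{cor:comp:sub:L2:ball}, $\gamma(\pmb{a},c,s^b)=\achi\bigl(P,\Vert\cdot\Vert^{\pmb{a}\cdot\overline{\pmb{L}}-c\overline{A}}_{L^2}\bigr)-\achi\bigl(P,\Vert\cdot\Vert^{\pmb{a}\cdot\overline{\pmb{L}}+(b-c)\overline{A}}_{L^2,s^b,\sub}\bigr)$, so the bound just obtained reads
\[
\achi\bigl(P,\Vert\cdot\Vert^{\pmb{a}\cdot\overline{\pmb{L}}+(b-c)\overline{A}}_{L^2,s^b,\sub}\bigr)\ \le\ \achi\bigl(P,\Vert\cdot\Vert^{\pmb{a}\cdot\overline{\pmb{L}}-c\overline{A}}_{L^2}\bigr)+C_5\,b\,\vert\pmb{a}\vert_1^{d-1}.
\]
Put $n=\rank P$. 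By \cite[Proposition~2.1]{MoCont}, for any normed $\ZZ$-module of rank $n$ one has $\achi\le\ah$ (as $\ahone\ge 0$) and $\ah\le\max\{0,\achi\}+\log(18)\,n+2\log(n!)$. Applying the first to $\bigl(P,\Vert\cdot\Vert^{\pmb{a}\cdot\overline{\pmb{L}}-c\overline{A}}_{L^2}\bigr)$ and the second to $\bigl(P,\Vert\cdot\Vert^{\pmb{a}\cdot\overline{\pmb{L}}+(b-c)\overline{A}}_{L^2,s^b,\sub}\bigr)$, then combining with the previous display and $\max\{0,x+t\}\le\max\{0,x\}+t$ for $t\ge 0$, one gets
\[
\ah\bigl(P,\Vert\cdot\Vert^{\pmb{a}\cdot\overline{\pmb{L}}+(b-c)\overline{A}}_{L^2,s^b,\sub}\bigr)\ \le\ \ah\bigl(P,\Vert\cdot\Vert^{\pmb{a}\cdot\overline{\pmb{L}}-c\overline{A}}_{L^2}\bigr)+C_5\,b\,\vert\pmb{a}\vert_1^{d-1}+\log(18)\,n+2\log(n!).
\]
Since $A$ is very ample and the $a_i$ are non-negative, multiplication by $s^{\vert\pmb{a}\vert_1+c}$ embeds $P$ into $H^0(a_1(L_1+A)+\cdots+a_r(L_r+A))$, so $n\le\rank H^0(a_1(L_1+A)+\cdots+a_r(L_r+A))$; as $x\mapsto\log(18)x+2\log(x!)$ is increasing, the second inequality of \eqref{eqn:L2:imply:sup} bounds $\log(18)\,n+2\log(n!)$ by $C_2\vert\pmb{a}\vert_1^{d-1}\log(\vert\pmb{a}\vert_1)$ for $\vert\pmb{a}\vert_1\ge 2$. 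Taking $a_0=\max\{2,a(1/2)\}$ then completes the proof.

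The only real difficulty is the bookkeeping of constants. The requirement that $C_5$ depend only on $\overline{\pmb{L}}$ and $\overline{A}$ is exactly what forces us to freeze $\epsilon$ (so that $(1+\epsilon)^{r+2}/(1-\epsilon)^{r+1}$ becomes an absolute constant) and to recycle $C_1$ from \eqref{eqn:L2:imply:sup} to control the growth of $h^0\bigl(2\vert\pmb{a}\vert_1(L_1+\cdots+L_r+rA)\bigr)$; and one must keep every inequality pointing the right way when passing from $\gamma$ to $\achi$ to $\ah$, the point being that the $\max\{0,\cdot\}$ truncations cost nothing, because on the $L^2$ side only $\achi\le\ah$ is used, while on the sub side $\ah$ is being bounded from above.
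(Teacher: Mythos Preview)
Your proof is correct and follows essentially the same route as the paper: bound the volume difference $\gamma(\pmb{a},c,s^b)$ from below via Corollary~\ref{cor:comp:sub:L2:ball} (using $\log|s^b|=b\log|s|$ and the rank bound from \eqref{eqn:L2:imply:sup}), then convert the $\achi$-inequality into the desired $\ah$-inequality by \cite[Proposition~2.1]{MoCont}. The paper's write-up is more compact: it first records $\Vert\cdot\Vert_{L^2,s^b,\sub}^{\pmb{a}\cdot\overline{\pmb{L}}+(b-c)\overline{A}}\le \Vert\cdot\Vert_{L^2}^{\pmb{a}\cdot\overline{\pmb{L}}-c\overline{A}}$ (since $\Vert s\Vert_{\sup}\le 1$) and then invokes part~(2) of \cite[Proposition~2.1]{MoCont} once to obtain $\ah(L^2)-\ah(\sub)+C_2|\pmb{a}|_1^{d-1}\log|\pmb{a}|_1\ge \achi(L^2)-\achi(\sub)=\gamma$, rather than splitting the passage from $\achi$ to $\ah$ into two separate inequalities as you do.
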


\begin{Lemma}
\label{lem:thm:h:0:estimate:big:2}
There are constants $C_6$ and $C_7$ depending only on
$\overline{\pmb{L}}$ and 
$\overline{A}$ such that
\[
\ah\left(I^0(\rest{(\pmb{a}\cdot\pmb{L} + (b-c)A)}{bY}), \Vert\cdot\Vert_{L^2,\quot}^{\pmb{a}\cdot\overline{\pmb{L}} + (b-c)\overline{A}}\right) \\
\leq C_6 b \vert \pmb{a}\vert_{1}^{d-1} + (C_4 + C_7) \vert \pmb{a}\vert_{1}^{d-1}\log(\vert \pmb{a}\vert_{1})
\]
for all $\pmb{a} \in \ZZ_{\geq 0}^r$ and
$b, c \in \ZZ$ with
$\vert \pmb{a} \vert_{1} \geq b \geq c \geq 0$ and
$\vert \pmb{a} \vert_{1} \geq 2$.
\end{Lemma}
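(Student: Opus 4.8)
The plan is to carry out the multi-index analogue of the corresponding step in the proof of \cite[Theorem~3.1]{MoCont}, reducing everything to the $(d-1)$-dimensional generically smooth arithmetic variety $Y := \zeros(s)$. For $j \in \ZZ$ write $\mathcal{M}_j := \pmb{a}\cdot\pmb{L} + jA$, so the object to be bounded is $I^0(\rest{\mathcal{M}_{b-c}}{bY})$. Since $Y$ is the zero locus of the regular section $s$ of $A$, for each $k$ with $0 \leq k \leq b-1$ multiplication by $s$ fits $I^0(\rest{\mathcal{M}_{b-c-k}}{(b-k)Y})$ into an exact sequence of normed $\ZZ$-modules $0 \to I^0(\rest{\mathcal{M}_{b-c-k-1}}{(b-k-1)Y}) \to I^0(\rest{\mathcal{M}_{b-c-k}}{(b-k)Y}) \to I^0(Y,\rest{\mathcal{M}_{b-c-k}}{Y}) \to 0$, where the sub consists of the sections vanishing along $Y$, the quotient is the image of restriction to $Y$, and the norms are the associated $L^2$-quotient norms. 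Iterating this from $k=0$ down to the zero module at $k=b$, and applying \cite[(4) of Proposition~2.1]{MoCont} at each step, bounds $\ah(I^0(\rest{\mathcal{M}_{b-c}}{bY}),\Vert\cdot\Vert_{L^2,\quot})$ by the sum over $k=0,\dots,b-1$ of $\ah(I^0(Y,\rest{\mathcal{M}_{b-c-k}}{Y}))$ (with the induced norms), plus at each step an additivity error bounded in terms of the rational rank of that $Y$-quotient alone (of the shape $\log(18)$ times the rank plus twice the log of its factorial). Because $\vert b-c-k\vert \leq b \leq \vert\pmb{a}\vert_1$, each such rank is at most $\rank H^0(Y,\rest{a_1(L_1+A)+\cdots+a_r(L_r+A)}{Y})$, so by the defining property of $C_4$ and $b \leq \vert\pmb{a}\vert_1$ the total additivity error is at most $C_4\vert\pmb{a}\vert_1^{d-1}\log(\vert\pmb{a}\vert_1)$.

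It remains to bound, uniformly for $\vert j\vert \leq \vert\pmb{a}\vert_1$, the $\ah$ of $I^0(Y,\rest{\mathcal{M}_j}{Y})$ equipped with the subquotient norm it inherits from $(H^0(X,\mathcal{M}_{b-c}),\Vert\cdot\Vert_{L^2})$, by $C_6\vert\pmb{a}\vert_1^{d-1}$ plus an $O(\vert\pmb{a}\vert_1^{d-2}\log(\vert\pmb{a}\vert_1))$ term; summing over the $b$ values of $k$ and using $b \leq \vert\pmb{a}\vert_1$ then gives the lemma, $C_7$ absorbing this summed error together with the norm-comparison losses described now. Put on $\rest{\overline{A}}{Y}$ and the $\rest{\overline{L}_i}{Y}$ the $C^{\infty}$ metrics obtained by restriction, and fix a volume form on $Y(\CC)$ compatible with $\Phi$ via restriction. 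The key point is that the subquotient $L^2$-norm inherited by $I^0(Y,\rest{\mathcal{M}_j}{Y})$ from $H^0(X,\mathcal{M}_{b-c})$ --- which involves passing from a section of high vanishing order along $Y$ to its leading term on $Y$ --- is, up to a factor bounded by a polynomial in $\vert\pmb{a}\vert_1$ independent of $j,b,c$, the $L^2$-norm of $H^0(Y,\rest{\mathcal{M}_j}{Y})$ restricted to the subgroup $I^0(Y,\rest{\mathcal{M}_j}{Y})$; the polynomial factor is supplied by Gromov's inequality (\cite[Corollary~1.1.2]{MoCont}) applied to $\rest{\overline{L}_1}{Y},\dots,\rest{\overline{L}_r}{Y}$ and $\pm\rest{\overline{A}}{Y}$, so that by \cite[Proposition~2.1]{MoCont} replacing one norm by the other costs only $O(\vert\pmb{a}\vert_1^{d-2}\log(\vert\pmb{a}\vert_1))$. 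Since $I^0(Y,\rest{\mathcal{M}_j}{Y})$ with the restricted $L^2$-norm is a sub-$\ZZ$-module of $(H^0(Y,\rest{\mathcal{M}_j}{Y}),\Vert\cdot\Vert_{L^2})$, its $\ah$ is at most $\ah(H^0(Y,\rest{\mathcal{M}_j}{Y}),\Vert\cdot\Vert_{L^2})$, and the latter is bounded, by a standard upper estimate for $\ah$ on the $(d-1)$-dimensional arithmetic variety $Y$ (via $\achi$ and Minkowski's theorem as in \cite[Proposition~2.1]{MoCont}, comparing $\rest{\mathcal{M}_j}{Y}$ with a positive twist of $\rest{(a_1(\overline{L}_1+\overline{A})+\cdots+a_r(\overline{L}_r+\overline{A}))}{Y}$, which is licit because $\vert j\vert\leq\vert\pmb{a}\vert_1$), by a polynomial in $(\pmb{a},j)$ of total degree $\leq d-1$, hence by $C_6\vert\pmb{a}\vert_1^{d-1} + O(\vert\pmb{a}\vert_1^{d-2}\log(\vert\pmb{a}\vert_1))$ when $\vert\pmb{a}\vert_1 \geq 2$. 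When $d=1$ the statement is immediate: then $Y_{\QQ} = \emptyset$, all the ranks above vanish, $I^0(\rest{\mathcal{M}_{b-c}}{bY})$ is a finite abelian group of order at most $e^{C_6 b}$ with $C_6$ depending only on $s$, and the norms play no role.

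The step I expect to be the main obstacle is precisely this norm bookkeeping: one must verify that the subquotient $L^2$-norm inherited by the $k$-th graded piece from $H^0(X,\mathcal{M}_{b-c})$ is genuinely comparable --- with a factor polynomial in $\vert\pmb{a}\vert_1$ and \emph{uniform in $j$, $b$ and $c$} --- to the intrinsic $L^2$-norm on $Y$, which forces care about the choice of $C^{\infty}$ metric on $\rest{A}{Y}$, about interpreting the leading term along $Y$ of a section of high vanishing order as a ``residue'' living on $Y$, and about the non-reducedness of $bY$; it is exactly here that the uniform Gromov estimate is unavoidable. Everything else --- the exact sequences above, the iterated use of \cite[Proposition~2.1]{MoCont}, and the polynomial degree bounds for the relevant arithmetic intersection numbers and geometric Hilbert functions on $Y$ --- is formally identical to \cite{MoCont}, the only change being the passage from a single index to the multi-index $\pmb{a}$, the constraints $\vert j\vert \leq \vert\pmb{a}\vert_1$ and $b \leq \vert\pmb{a}\vert_1$ keeping every estimate uniform.
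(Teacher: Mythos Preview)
Your filtration strategy is the same as the paper's (you unwind the filtration of $I^0(\rest{\mathcal{M}_{b-c}}{bY})$ by powers of $I_Y$ from the other end, but the graded pieces are identical), and the telescoping via \cite[(4) of Proposition~2.1]{MoCont} is fine. The genuine gap is in the norm comparison on each graded piece. The subquotient norm on $I^0(Y,\rest{\mathcal{M}_{b-c-k}}{Y})$ is, after identifying it as in \cite[Lemma~3.4]{MoCont}, the quotient of the \emph{sub}-norm $\Vert\cdot\Vert_{L^2,s^k,\sub}$ on $H^0(X,\mathcal{M}_{b-c-k})$; comparing this with an intrinsic $L^2$-norm on $Y$ is not a Gromov-type statement and the discrepancy is not polynomial but \emph{exponential} in $\vert\pmb{a}\vert_1$ (the factor $\vert s\vert^{-2k}$ has no uniform bound near $Y$). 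The paper handles this with the open-set estimate \cite[Lemma~1.4]{MoCont} on a $U$ disjoint from $Y'(\CC)$, together with \cite[Corollary~1.1.3]{MoCont} for the passage from the quotient norm on $X$ to the restriction norm on $Y'$; both give factors of the shape $D^{\vert\pmb{a}\vert_1}$, so after taking logs and multiplying by the rank $O(\vert\pmb{a}\vert_1^{d-2})$ the cost is $O(\vert\pmb{a}\vert_1^{d-1})$ per piece, contributing to $C_6$, not to the $\log$ term as you state.

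Two further ingredients are missing. First, $Y$ need not be horizontal: the paper splits $Y=Y'\cup(\text{vertical})$, so that $I^0(Y,\ldots)\to I^0(Y',\ldots)$ has finite kernel contained in $H^0(X,\mathcal{M}_{b-c-k}\otimes I'/I)$, and bounds $\log\#$ of the latter by Snapper's theorem (Claim~\ref{claim:thm:h:0:estimate:big:5:2}); your ``standard $\achi$/Minkowski'' argument does not see this torsion. Second, the upper bound for $\ah(H^0(Y',\rest{\mathcal{M}_j}{Y'}),\Vert\cdot\Vert_{L^2})$ is not a black box: the $L_i$ are not positive, and the comparison with ``a positive twist of $\sum a_i(\overline{L}_i+\overline{A})$'' you invoke is not available on $Y'$. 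What the paper uses instead are the small sections $t_i\in H^0(X,nA-L_i)$, nonvanishing on every component of $Y$ (this is exactly why they were arranged in Claim~\ref{claim:thm:h:0:estimate:big:2}), giving $\rest{\overline{L}_i}{\widetilde{Y'}}\leq \rest{n\overline{A}}{\widetilde{Y'}}$ on the normalization, after which one is reduced to a single ample $\overline{A}$ and \cite[Lemma~3.3]{MoCont} applies.
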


We will prove these lemmas in the next subsection.
Assuming them, we proceed with the proof of our theorem.
Gathering \eqref{thm:h:0:estimate:big:eqn:1}, Lemma~\ref{lem:thm:h:0:estimate:big:1} and
Lemma~\ref{lem:thm:h:0:estimate:big:2},
if we put $C = C_5 + C_6$ and $D = 2C_2 + C_4 + C_7$,
then 
\begin{multline*}
\ah\left(H^0(\pmb{a}\cdot\pmb{L} + (b-c)A),
\Vert\cdot\Vert^{\pmb{a}\cdot\overline{\pmb{L}} + (b-c)\overline{A}}_{L^2}\right) \\
\leq \ah\left(H^0(\pmb{a}\cdot\pmb{L} - c A), 
\Vert\cdot\Vert^{\pmb{a}\cdot \overline{\pmb{L}} - c \overline{A}}_{L^2}\right)
+ C b \vert \pmb{a}\vert_{1}^{d-1} + D \vert \pmb{a}\vert_{1}^{d-1} \log(\vert \pmb{a}\vert_{1})
\end{multline*}
for all $\pmb{a} \in \ZZ_{\geq 0}^r$ and
$b, c \in \ZZ$ with
$\vert \pmb{a}\vert_{1}  \geq b \geq c \geq 0$ and
$\vert \pmb{a} \vert_{1} \geq a_0$.
This proves Theorem~\ref{thm:h:0:estimate:big:main:L:2}.
\end{proof}

\subsection{The proofs of Lemma~\ref{lem:thm:h:0:estimate:big:1} and
Lemma~\ref{lem:thm:h:0:estimate:big:2}}
\renewcommand{\theClaim}{\arabic{section}.\arabic{subsection}.\arabic{Claim}}
\setcounter{Theorem}{0}
In this subsection, 
we consider the proofs of
Lemma~\ref{lem:thm:h:0:estimate:big:1} and
Lemma~\ref{lem:thm:h:0:estimate:big:2}.
We keep every notation as in the previous subsection.

\begin{proof}[Proof of Lemma~\ref{lem:thm:h:0:estimate:big:1}]
Note that
$\Vert\cdot\Vert_{L^2, s^b,\sub}^{\pmb{a}\cdot\overline{\pmb{L}}+(b-c)\overline{A}} \leq
 \Vert\cdot\Vert_{L^2}^{\pmb{a}\cdot\overline{\pmb{L}} - c \overline{A}}$.
Thus, by \cite[(2) of Proposition~2.1]{MoCont},
\begin{multline*}
\ah\left(H^0(aL-cA), \Vert\cdot\Vert_{L^2}^{a\overline{L}- c\overline{A}}\right) - 
\ah\left(H^0(\pmb{a}\cdot\pmb{L}-cA), \Vert\cdot\Vert_{L^2, s^b,\sub}^{\pmb{a}\cdot\overline{\pmb{L}}+(b-c)\overline{A}}\right) \\
\hspace{20em}+ C_2 \vert \pmb{a}\vert_{1}^{d-1} \log(\vert \pmb{a}\vert_{1}) \\
\geq \achi\left(H^0(\pmb{a}\cdot\pmb{L}-cA), \Vert\cdot\Vert_{L^2}^{\pmb{a}\cdot\overline{\pmb{L}} - c\overline{A}}\right) - 
\achi\left(H^0(\pmb{a}\cdot\pmb{L}-cA), \Vert\cdot\Vert_{L^2, s^b,\sub}^{\pmb{a}\cdot\overline{\pmb{L}}+(b-c)\overline{A}}\right).
\end{multline*}
Hence, by Corollary~\ref{cor:comp:sub:L2:ball},
we obtain Lemma~\ref{lem:thm:h:0:estimate:big:1}.
\end{proof}

\medskip
\addtocounter{Theorem}{1}
\begin{proof}[Proof of Lemma~\ref{lem:thm:h:0:estimate:big:2}]
This is very complicated.
Let $k$ be an integer with
$0 \leq k < b$.
Let $\Vert\cdot\Vert_{L^2, s^k, \sub,\quot}^{\pmb{a}\cdot\overline{\pmb{L}}+(b-c)\overline{A}}$
be the quotient norm
induced by the surjective homomorphism
\[
H^0(\pmb{a}\cdot\pmb{L} + (b-c-k)A) \to I^0(Y, \rest{\pmb{a}\cdot\pmb{L} + (b-c-k)A}{Y})
\]
and the norm 
$\Vert\cdot\Vert_{L^2, s^k, \sub}^{\pmb{a}\cdot\overline{\pmb{L}}+(b-c)\overline{A}}$ of $H^0(aL + (b-c-k)A)$.
Let 
$Y'$ be the horizontal part of $Y$,
that is, the Zariski closure of $Y \cap X_{\QQ}$ in $X$.
In the case where $d=1$, $Y' = \emptyset$.
Since the kernel of
\[
I^0(Y, \rest{aL + (b-c-k)A}{Y}) \to I^0(Y', \rest{aL + (b-c-k)A}{Y'})
\]
is a torsion group,
$I^0(Y', \rest{\pmb{a}\cdot\pmb{L} + (b-c-k)A}{Y'})$ has
the same norm
$\Vert\cdot\Vert_{L^2, s^k, \sub,\quot}^{\pmb{a}\cdot\overline{\pmb{L}}+(b-c)\overline{A}}$.
Then we have the following. 

\begin{Claim}
\label{claim:thm:h:0:estimate:big:5:1}
There are constants $C'_6$ and $C'_7$ depending only on
$\overline{\pmb{L}}$ and
$\overline{A}$ such that
\[
\ah\left(I^0(Y', \rest{\pmb{a}\cdot\pmb{L} + (b-c-k)A}{Y'}), 
\Vert\cdot\Vert_{L^2, s^k, \sub,\quot}^{\pmb{a}\cdot\overline{\pmb{L}}+(b-c)\overline{A}}\right)
\leq C'_6 \vert \pmb{a}\vert_{1}^{d-1} + C'_7 \vert \pmb{a}\vert_{1}^{d-2}\log(\vert \pmb{a}\vert_{1})
\]
for all $\pmb{a} \in \ZZ_{\geq 0}^r$ and
$b, c, k\in \ZZ$ with
$\vert \pmb{a}\vert_{1} \geq b \geq c \geq 0$,
$\vert\pmb{a}\vert_{1} \geq 2$ and $0 \leq k < b$.
\end{Claim}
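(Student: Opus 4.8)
\emph{Overall strategy.} Writing $M:=\pmb{a}\cdot\pmb{L}+(b-c-k)A$ and $\overline{M}:=\pmb{a}\cdot\overline{\pmb{L}}+(b-c-k)\overline{A}$, the first reduction is that, since the kernel of $I^0(Y,\rest{M}{Y})\to I^0(Y',\rest{M}{Y'})$ is torsion and $\ah$ ignores torsion, it is enough to bound $\ah(I^0(Y',\rest{M}{Y'}),\Vert\cdot\Vert_{L^2,s^k,\sub,\quot}^{\pmb{a}\cdot\overline{\pmb{L}}+(b-c)\overline{A}})$. For $v\in I^0(Y',\rest{M}{Y'})$ any lift $\xi\in H^0(X,M)$ restricts to $v$ on $Y'(\CC)$, and by construction
\[
\left(\Vert v\Vert_{L^2,s^k,\sub,\quot}^{\pmb{a}\cdot\overline{\pmb{L}}+(b-c)\overline{A}}\right)^2=\inf_{\xi}\int_{X(\CC)}\vert s\vert_{\overline{A}}^{2k}\,\vert\xi\vert_{\overline{M}}^2\,\Phi .
\]
The plan is then: (i) to bound this quotient-sub norm from below by a fixed $L^2$-norm on $Y'$, losing only a factor exponential in $\vert\pmb{a}\vert_1$; and (ii) to conclude by routine operations, using the rank bound $C_3\vert\pmb{a}\vert_1^{d-2}$ fixed before the claim together with a crude Hilbert--Samuel-type upper bound for $\ah$ on the $(d-1)$-dimensional arithmetic variety $Y'$.

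\emph{The key estimate and the main difficulty.} I would fix once and for all, independently of $\pmb{a},b,c,k$, a volume form $\Phi_{Y'}$ on $Y'(\CC)=Y(\CC)$ and a tubular neighbourhood $U_0$ of $Y(\CC)$ in $X(\CC)$ fibred over $Y(\CC)$ by holomorphic discs of a fixed radius $\epsilon_0$ transverse to $Y(\CC)$, on which $\vert s\vert_{\overline{A}}^2=\vert f\vert^2e^{-\varphi}$ with $f$ a local equation of $Y$ and $\varphi\in C^{\infty}$. The goal is to prove that there are positive constants $\kappa_1,\kappa_2$ depending only on $X,\overline{\pmb{L}},\overline{A}$ with
\[
\inf_{\xi}\int_{X(\CC)}\vert s\vert_{\overline{A}}^{2k}\vert\xi\vert_{\overline{M}}^2\,\Phi\ \geq\ e^{-(\kappa_1\vert\pmb{a}\vert_1+\kappa_2)}\int_{Y'(\CC)}\vert v\vert_{\rest{\overline{M}}{Y'}}^2\,\Phi_{Y'}.
\]
Restricting the integral to $U_0$: on a disc $D$ of radius $\epsilon_0$ over $y\in Y(\CC)$, the holomorphic function representing $\xi$ in a local frame has subharmonic square modulus equal to that of $v$ at the centre $y$, so the circle sub-mean-value inequality gives $\int_D\vert f\vert^{2k}\vert\xi\vert^2\,dA\geq\frac{\pi\epsilon_0^{2k+2}}{k+1}\vert v(y)\vert^2$; the factor $e^{-k\varphi}$ costs at most $e^{-k\sup_{U_0}\varphi}$, and the local weight $e^{-\psi_{\overline{M}}}$ of $\overline{M}$, where $\psi_{\overline{M}}=\sum_i a_i\psi_{\overline{L}_i}+(b-c-k)\psi_{\overline{A}}$, differs on $U_0$ from its value at $y$ only by a factor $e^{\pm O(\vert\pmb{a}\vert_1)}$, its gradient being $O(\vert\pmb{a}\vert_1)$ on the fixed set $U_0$. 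Integrating over $y\in Y(\CC)$ by Fubini and collecting the terms $k\log\epsilon_0^{-2}$, $k\sup\varphi$, $\log(k+1)$ and $O(\vert\pmb{a}\vert_1)$ — all of size $O(\vert\pmb{a}\vert_1)$ since $0\leq k<b\leq\vert\pmb{a}\vert_1$ — gives the displayed inequality. \textbf{I expect this tubular-neighbourhood estimate to be the main obstacle, and in particular the bookkeeping of how the metric $\overline{M}$ — which depends on $\pmb{a}$ — interacts with the fixed tube $U_0$; everything else is routine.}

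\emph{Conclusion from the key estimate.} Put $\lambda_{\pmb{a}}:=e^{-(\kappa_1\vert\pmb{a}\vert_1+\kappa_2)/2}$. The estimate gives $\Vert v\Vert_{L^2,s^k,\sub,\quot}^{\pmb{a}\cdot\overline{\pmb{L}}+(b-c)\overline{A}}\geq\lambda_{\pmb{a}}\Vert v\Vert_{L^2,Y'}$ for $v\in I^0(Y',\rest{M}{Y'})$, where $\Vert\cdot\Vert_{L^2,Y'}$ denotes the $L^2$-norm of $H^0(Y',\rest{M}{Y'})$ attached to $\rest{\overline{M}}{Y'}$ and $\Phi_{Y'}$; therefore
\[
\ah\left(I^0(Y',\rest{M}{Y'}),\Vert\cdot\Vert_{L^2,s^k,\sub,\quot}^{\pmb{a}\cdot\overline{\pmb{L}}+(b-c)\overline{A}}\right)\leq\ah\left(H^0(Y',\rest{M}{Y'}),\lambda_{\pmb{a}}\Vert\cdot\Vert_{L^2,Y'}\right).
\]
By \cite[Proposition~2.1]{MoCont} the right-hand side is at most $\ah(H^0(Y',\rest{M}{Y'}),\Vert\cdot\Vert_{L^2,Y'})+\log(\lambda_{\pmb{a}}^{-1})\cdot\rank H^0(Y',\rest{M}{Y'})$ plus an error of order $\rank\cdot\log\rank$. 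Since $A$ is very ample, $\rank H^0(Y',\rest{M}{Y'})\leq\rank H^0(Y',\rest{a_1(L_1+A)+\cdots+a_r(L_r+A)}{Y'})\leq C_3\vert\pmb{a}\vert_1^{d-2}$, and $\log(\lambda_{\pmb{a}}^{-1})=O(\vert\pmb{a}\vert_1)$, so these two terms are $O(\vert\pmb{a}\vert_1^{d-1})$ and $O(\vert\pmb{a}\vert_1^{d-2}\log\vert\pmb{a}\vert_1)$. For the remaining $\ah(H^0(Y',\rest{\overline{M}}{Y'}),\Vert\cdot\Vert_{L^2,Y'})$ I would use the sections $t_i\in H^0(X,nA-L_i)$ of Claim~\ref{claim:thm:h:0:estimate:big:2} (with $\Vert t_i\Vert_{\sup}\leq 1$ and $t_i$ non-zero on every component of $Y'$): multiplication by $t_1^{a_1}\cdots t_r^{a_r}$ is a norm-non-increasing injection of $H^0(Y',\rest{M}{Y'})$ into $H^0(Y',\rest{mA}{Y'})$ with $m:=n\vert\pmb{a}\vert_1+(b-c-k)\geq 0$ of size $O(\vert\pmb{a}\vert_1)$, carrying $\Vert\cdot\Vert_{L^2,Y'}$ for $\overline{M}$ to $\Vert\cdot\Vert_{L^2,Y'}$ for $m\overline{A}$; then by monotonicity of $\ah$ under such injections, Gromov's inequality \cite[Corollary~1.1.2]{MoCont}, \cite[Proposition~2.1]{MoCont}, and \cite[Theorem~3.1]{MoCont} applied to $\rest{\overline{A}}{Y'}$ on the $(d-1)$-dimensional generically smooth $Y'$ (after the harmless reduction to $Y'$ integral), this term is $\leq C\vert\pmb{a}\vert_1^{d-1}+C\vert\pmb{a}\vert_1^{d-2}\log\vert\pmb{a}\vert_1$. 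Collecting the contributions yields the claim with suitable constants $C'_6,C'_7$.
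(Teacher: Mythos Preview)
Your proof is essentially correct, but your key norm-comparison step takes a genuinely different route from the paper's.

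\textbf{Where the approaches diverge.} To bound the quotient-sub norm on $I^0(Y',\rest{M}{Y'})$ from below by a standard $L^2$-norm on $Y'$ (losing a factor exponential in $\vert\pmb{a}\vert_1$), the paper works on an open set $U$ whose closure \emph{avoids} $Y'(\CC)$. Since $\vert s\vert$ is bounded below on $U$, one gets $\Vert\cdot\Vert_{L^2,s^k,\sub}\geq D_3^{-2\vert\pmb{a}\vert_1}\Vert\cdot\Vert_{L^2}$ on $H^0(X,M)$ directly, invoking the analytic-continuation type estimate \cite[Lemma~1.4]{MoCont} to recover the full $L^2$-norm from $\int_U$. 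A second step, via \cite[Corollary~1.1.3]{MoCont}, then compares the quotient $L^2$-norm on $X$ with the intrinsic $L^2$-norm on $Y'$. By contrast, you work \emph{near} $Y'(\CC)$, fibring a tubular neighbourhood by transverse holomorphic discs and using the circle sub-mean-value inequality for $\vert\xi_0\vert^2$ to bound $\int_{D_y}\vert z\vert^{2k}\vert\xi_0\vert^2\,dA$ from below by $\frac{\pi\epsilon_0^{2k+2}}{k+1}\vert v(y)\vert^2$; this collapses the paper's two steps into one, at the cost of explicitly tracking how the weight $e^{-\psi_{\overline{M}}}$ and the factor $e^{-k\varphi}$ vary across the tube. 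Your argument is more self-contained (it avoids the two cited lemmas from \cite{MoCont}), while the paper's is more modular. Both yield the same exponential loss $e^{-O(\vert\pmb{a}\vert_1)}$, and the rest of the proof (Proposition~2.1 bookkeeping, the $t_i$-injection, and the Hilbert--Samuel bound on $Y'$) is essentially the same in both.

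\textbf{Two small corrections.} First, your opening reduction ``since the kernel of $I^0(Y,\cdot)\to I^0(Y',\cdot)$ is torsion\ldots'' is superfluous: the claim is already stated on $Y'$. Second, for the final crude bound $\ah(H^0(\rest{m\overline{A}}{Y'}))\leq Dm^{d-1}$ you cite \cite[Theorem~3.1]{MoCont}, but that is the fundamental estimation inequality and does not give such an absolute upper bound; the correct reference is \cite[Lemma~3.3]{MoCont}, and as in the paper one should pass to the normalization $\widetilde{Y'}$ (not merely reduce to $Y'$ integral) before applying it.
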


\begin{proof}
If $d=1$, then
$I^0(Y', \rest{aL + (b-c-k)A}{Y'} )= 0$. Thus
our assertion is obvious, so that we assume
$d \geq 2$.

Let $U$ be an open set of $X(\CC)$ such that
the closure of $U$ does not intersect with $Y'(\CC)$ and that
$U$ is not empty on each connected component of $X(\CC)$.
Applying \cite[Lemma~1.4]{MoCont} to
${L_1}_{\CC}, \ldots, {L_r}_{\CC},A_{\CC}$ and
${L_1}_{\CC}, \ldots, {L_r}_{\CC}, -A_{\CC}$,
we can find constants $D_1 \geq 1$ and $D'_1 \geq 1$
such that,
for all $\pmb{l} \in \ZZ_{\geq 0}^r$, $m \in \ZZ$ and
$u \in H^0(X(\CC), \pmb{l}\cdot\pmb{L} + mA)$,
\[
 D'_1 D_1^{\vert \pmb{l}\vert _1+ \vert m \vert} 
 \int_U \vert u \vert^2 \Phi \geq \int_{X(\CC)} \vert u \vert^2 \Phi.
\]

Since $0 < \inf\nolimits_{x \in U} \{ \vert s \vert(x)\} \leq 1$,
if we set $D_2 = 1/ \inf\nolimits_{x \in U} \{ \vert s \vert(x)\}$
then $D_2 \geq 1$. Thus, if we set $D_3 = \max\{ D_2, D_1\}$,
then, for $u \in H^0(X, \pmb{a}\cdot\pmb{L} +(b-c-k)A)$,
\begin{align*}
\int_{X(\CC)}  \vert s^k \otimes u \vert^2 \Phi & 
\geq \int_U \vert s^k \otimes u \vert ^2 \Phi \geq D_2^{-2k} \int_U \vert u \vert^2 \Phi \\
& \geq  D_2^{-2k} {D'_1}^{-1} D_1^{-(\vert \pmb{a} \vert_{1}+\vert b-c-k\vert )} \int_{X(\CC)} \vert u \vert^2 \Phi \\
& \geq {D'_1}^{-1} D_3^{-4\vert\pmb{a}\vert_{1}} 
\int_{X(\CC)} \vert u \vert^2 \Phi.
\end{align*}
The above inequality means that
\[
\Vert\cdot\Vert_{L^2, s^k,\sub}^{\pmb{a}\cdot\overline{\pmb{L}}+(b-c)\overline{A}} \geq {D'_1}^{-1/2} D_3^{-2\vert \pmb{a}\vert_{1}}
\Vert\cdot\Vert_{L^2}^{\pmb{a}\cdot\overline{\pmb{L}}+(b-c-k)\overline{A}}.
\]
Therefore, we have
\[
\Vert\cdot\Vert_{L^2, s^k,\sub,\quot}^{\pmb{a}\cdot\overline{\pmb{L}}+(b-c)\overline{A}} \geq {D'_1}^{-1/2} D_3^{-2\vert \pmb{a}\vert_{1}}
\Vert\cdot\Vert_{L^2,\quot}^{\pmb{a}\cdot\overline{\pmb{L}}+(b-c-k)\overline{A}},
\]
where 
$\Vert\cdot\Vert_{L^2,\quot}^{\pmb{a}\cdot\overline{\pmb{L}}+(b-c-k)\overline{A}}$ is the quotient norm of $I^0(Y', \rest{\pmb{a}\cdot\pmb{L} + (b - c -k)A}{Y'})$
induced by the surjective homomorphism
\[
H^0(X, \pmb{a}\cdot\pmb{L} + (b -c-k)A) \to I^0(Y', \rest{\pmb{a}\cdot\pmb{L} + (b - c -k)A}{Y'}).
\]
Note that $e^x \geq x + 1$ ($x \geq 0$).
Applying \cite[Corollary~1.1.3]{MoCont}
to ${L_1}_{\CC}, \ldots, {L_r}_{\CC},A_{\CC}$ and 
${L_1}_{\CC}, \ldots, {L_r}_{\CC}, -A_{\CC}$,
we can find constants
$D_4, D'_4  \geq 1$ such that
\begin{multline*}
 \Vert\cdot\Vert_{L^2,\quot}^{\pmb{a}\cdot\overline{\pmb{L}}+(b-c-k)\overline{A}} \geq
{D'_4}^{-1/2} D_4^{-(\vert \pmb{a}\vert _{1}+\vert b-c - k\vert)/2}\Vert\cdot\Vert_{L^2}^{\rest{\pmb{a}\cdot\overline{\pmb{L}}+(b-c-k)\overline{A}}{Y'}} \\
\geq 
{D'_4}^{-1/2} D_4^{-\vert \pmb{a}\vert_{1}}\Vert\cdot\Vert_{L^2}^{\rest{\pmb{a}\cdot\overline{\pmb{L}}+(b-c-k)\overline{A}}{Y'}}
\end{multline*}
holds on
$I^0(Y', \rest{\pmb{a}\cdot\pmb{L}+(b-c-k)A}{Y'})$.
Here a volume form of $Y$
is given by the $C^{\infty}$-hermitian invertible sheaf
$\rest{(\overline{L}_1 + \cdots + \overline{L}_r + r\overline{A})}{Y'}$.
Therefore, if we set
$D_5 = \max \{ D_3, D_4 \}$ and $D'_5 = \max \{ D'_1, D'_4 \}$,
then
\[
\Vert\cdot\Vert_{L^2, s^k,\sub,\quot}^{\pmb{a}\cdot\overline{\pmb{L}}+(b-c)\overline{A}} \geq {D'_5}^{-1}D_5^{-3\vert \pmb{a}\vert_{1}} 
\Vert\cdot\Vert_{L^2}^{\rest{\pmb{a}\cdot\overline{\pmb{L}}+(b-c-k)\overline{A}}{Y'}}
\]
holds on $I^0(Y', \rest{\pmb{a}\cdot\pmb{L} + (b-c-k)A}{Y'})$.
Note that
$\rank H^0(\rest{\pmb{a}\cdot\pmb{L} + (b-c-k)A}{Y'}) 
\leq \rank H^0(\rest{a_1(L_1+A)+ \cdots + a_r(L_r+A)}{Y'})$.
Thus, by \cite[(3) of Proposition~2.1]{MoCont},
\begin{multline*}
\ah\left(I^0(Y', \rest{\pmb{a}\cdot\pmb{L} + (b-c-k)A}{Y'}), 
\Vert\cdot\Vert_{L^2, s^k,\sub,\quot}^{\pmb{a}\cdot\overline{\pmb{L}}+(b-c)\overline{A}}\right) \\
\leq \ah\left(I^0(Y', \rest{\pmb{a}\cdot\pmb{L} + (b-c-k)A}{Y'}),
\Vert\cdot\Vert_{L^2}^{\rest{\pmb{a}\cdot\overline{\pmb{L}}+(b-c-k)\overline{A}}{Y'}}\right) \\
\qquad\qquad\qquad\qquad+ \log({D'_5}D_5^{3\vert\pmb{a}\vert_{1}} )C_3 \vert \pmb{a}\vert_{1}^{d-2}  + C_4 \vert \pmb{a}\vert_{1}^{d-2}\log(\vert \pmb{a}\vert_{1}) \\
\leq \ah\left(H^0(Y', \rest{\pmb{a}\cdot\pmb{L}+(b-c-k)A}{Y'}),
\Vert\cdot\Vert_{L^2}^{\rest{\pmb{a}\cdot\overline{\pmb{L}}+(b-c-k)\overline{A}}{Y'}}\right)\\
\qquad\qquad\qquad\qquad+\log({D'_5}D_5^{3\vert \pmb{a}\vert_{1}} )C_3 \vert \pmb{a}\vert_{1}^{d-2}  + C_4 \vert \pmb{a}\vert_{1}^{d-2}\log(\vert \pmb{a}\vert_{1}).
\end{multline*}
Let $\widetilde{Y'}$ be the normalization of $Y'$.
Let $t_1, \ldots, t_r$ be small sections as in
Claim~\ref{claim:thm:h:0:estimate:big:2}.
Then $t_i$ gives rise to an inequality
$\rest{\overline{L}_i}{\widetilde{Y'}} \leq 
\rest{n\overline{A}}{\widetilde{Y'}}$.
Therefore,
\[
\rest{\pmb{a}\cdot\overline{\pmb{L}}+(b-c-k)\overline{A}}{\widetilde{Y'}} \leq \rest{(n \vert \pmb{a}\vert_{1} + b-c-k)\overline{A}}{\widetilde{Y'}}.
\]
Thus we have
\begin{multline*}
 \ah\left(H^0(Y', \rest{\pmb{a}\cdot\pmb{L}+(b-c-k)A}{Y'}),
\Vert\cdot\Vert_{L^2}^{\rest{\pmb{a}\cdot\overline{\pmb{L}}+(b-c-k)\overline{A}}{Y'}}\right)\\
\leq
\ah\left(H^0(\widetilde{Y'}, \rest{\pmb{a}\cdot\pmb{L}+(b-c-k)A}{\widetilde{Y'}}),
\Vert\cdot\Vert_{L^2}^{\rest{\pmb{a}\cdot\overline{\pmb{L}}+(b-c-k)\overline{A}}{\widetilde{Y'}}} \right) \\
\leq 
\ah\left(H^0(\widetilde{Y'}, \rest{(n\vert \pmb{a}\vert _{1} + b-c-k)A}{\widetilde{Y'}}),
\Vert\cdot\Vert_{L^2}^{\rest{(n \vert \pmb{a}\vert_{1} + b-c-k)\overline{A}}{\widetilde{Y'}}}\right).
\end{multline*}
Moreover, by \cite[Lemma~3.3]{MoCont},
there is a constant $D_6$ such that
\[
\ah\left(H^0(\widetilde{Y'}, \rest{mA}{\widetilde{Y'}}),
\Vert\cdot\Vert_{L^2}^{m \rest{\overline{A}}{\widetilde{Y'}}}\right)
\leq D_6 m^{d-1}
\]
for all $m \geq 1$. Thus we get the claim.
\end{proof}

We would like to extend Claim~\ref{claim:thm:h:0:estimate:big:5:1} 
to $Y$. First of all, we consider the following claim.

\begin{Claim}
\label{claim:thm:h:0:estimate:big:5:2}
Let $I$ and $I'$ be the defining ideals of
$Y$ and $Y'$ respectively.
Then there is a constant $C_6''$ such that
\[
\log \# H^0(X, (\pmb{a}\cdot\pmb{L} + (b-c-k)A) \otimes I'/I) \leq C''_6 \vert \pmb{a}\vert_{1}^{d-1}
\]
for all $\pmb{a} \in \ZZ_{\geq 0}^r$ and
$b, c, k \in \ZZ$ with
$\vert \pmb{a}\vert_{1} \geq b \geq c \geq 0$,
$\vert \pmb{a} \vert_{1} \geq 2$ and $0 \leq k < b$.
\end{Claim}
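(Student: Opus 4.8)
The plan is to reduce the claim to a uniform bound for the dimension of the space of global sections of a bounded--degree line bundle on a projective scheme of dimension at most $d-1$ over a finite field.

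\medskip\noindent
\emph{Step 1: $I'/I$ is a torsion sheaf supported in dimension $\le d-1$.}
Since $Y'$ is the horizontal part of $Y$, the sheaf $I'/I=\Ker(\OO_Y\to\OO_{Y'})$ is supported on the union of the vertical components of $Y$, hence inside finitely many closed fibres $X_{p_1},\dots,X_{p_l}$ of $X\to\Spec\ZZ$. As $I'/I$ is coherent it is therefore annihilated by $N=(p_1\cdots p_l)^{e}$ for some $e\ge 1$; in particular $H^0(X,\mathcal{F}\otimes I'/I)$ is a finite abelian group for every invertible sheaf $\mathcal{F}$, and its support is a closed subscheme of some $X_{p_i}$, so it has dimension $\le d-1$. (If $Y$ has no vertical component then $I'/I=0$ and there is nothing to prove; and when $d=1$ one has $Y'=\emptyset$, so $I'/I=\OO_Y$ is a skyscraper sheaf of bounded length and the claim is trivial with exponent $d-1=0$.) This step is where ``$Y'=$ horizontal part of $Y$'' is essential: it is what forces the problem onto the special fibres and lowers the relevant dimension to $d-1$.

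\medskip\noindent
\emph{Step 2: d\'evissage.}
By the standard d\'evissage of a coherent sheaf on a Noetherian scheme, $I'/I$ admits a finite filtration $0=G_0\subseteq G_1\subseteq\cdots\subseteq G_m=I'/I$ whose successive quotients are of the form $(\iota_j)_*\OO_{Z_j}$ for integral closed subschemes $\iota_j\colon Z_j\hookrightarrow X$; by Step~1 each $Z_j$ lies in some $X_{p_{\sigma(j)}}$ and $\dim Z_j\le d-1$. Tensoring this filtration with the invertible sheaf $\mathcal{F}:=\pmb a\cdot\pmb L+(b-c-k)A$ is exact, and by the projection formula the quotients become $(\iota_j)_*(\rest{\mathcal{F}}{Z_j})$; left--exactness of $H^0$ then gives a filtration of $H^0(X,\mathcal{F}\otimes I'/I)$ whose subquotients inject into $H^0(Z_j,\rest{\mathcal{F}}{Z_j})$. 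Hence
\[
\log\#H^0\bigl(X,(\pmb a\cdot\pmb L+(b-c-k)A)\otimes I'/I\bigr)\ \le\ \sum_{j=1}^{m}\log\#H^0\bigl(Z_j,\rest{\mathcal{F}}{Z_j}\bigr),
\]
and each term equals $(\log p_{\sigma(j)})\dim_{\kappa_j}H^0(Z_j,\rest{\mathcal{F}}{Z_j})$ for the appropriate finite field $\kappa_j$.

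\medskip\noindent
\emph{Step 3: bounding $\dim_{\kappa_j}H^0(Z_j,\rest{\mathcal{F}}{Z_j})$.}
Since $A$ is very ample, there is an integer $m_0$, depending only on $X$, $\overline{\pmb L}$ and $\overline{A}$, such that $m_0 A-L_i$ is globally generated on $X$ for every $i$. Its restriction to the integral scheme $Z_j$ stays globally generated, so it has a section nonzero at the generic point of $Z_j$; multiplication by such a section yields injections $\rest{L_i}{Z_j}\hookrightarrow\rest{m_0 A}{Z_j}$. Because $\pmb a\in\ZZ_{\ge 0}^r$ and $\vert b-c-k\vert\le\vert\pmb a\vert_1$ (as $0\le c\le b\le\vert\pmb a\vert_1$ and $0\le k<b$), these assemble into an injection $\rest{\mathcal{F}}{Z_j}\hookrightarrow\rest{MA}{Z_j}$ with $M\le(m_0+1)\vert\pmb a\vert_1$, whence $\dim_{\kappa_j}H^0(Z_j,\rest{\mathcal{F}}{Z_j})\le\dim_{\kappa_j}H^0(Z_j,\rest{MA}{Z_j})$. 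Since $\rest{A}{Z_j}$ is very ample on the projective scheme $Z_j$ of dimension $\le d-1$, the standard estimate for the Hilbert function (the same one furnishing the constants $C_1$ and $C_3$ above) gives a constant $c_j$ with $\dim_{\kappa_j}H^0(Z_j,\rest{MA}{Z_j})\le c_j\vert\pmb a\vert_1^{\,d-1}$ for $\vert\pmb a\vert_1\ge 2$. Summing over the finitely many $j$ and setting $C''_6:=\sum_j(\log p_{\sigma(j)})c_j$ proves the claim. The only points requiring care are that $m_0$ and the Hilbert--function bounds can be chosen uniformly in $\pmb a,b,c,k$ (both are clear from the above) and the torsion statement of Step~1; everything else is routine.
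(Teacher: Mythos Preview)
Your argument is correct, but it proceeds by a different route than the paper. The paper avoids d\'evissage entirely: it picks sections $l_i\in H^0(X,n_0A-L_i)$ that are nonzero at every associated point of $I'/I$ (this is exactly what \cite[Lemma~3.2]{MoCont} provides), so that multiplication by $l_1^{\otimes a_1}\otimes\cdots\otimes l_r^{\otimes a_r}$ is injective on $I'/I$ itself. This yields a single injection
\[
(\pmb{a}\cdot\pmb{L}+(b-c-k)A)\otimes I'/I\ \hookrightarrow\ (n_0\vert\pmb{a}\vert_1+b-c-k)A\otimes I'/I,
\]
after which Serre vanishing (for $m\ge m_0$) identifies $\log\# H^0(mA\otimes I'/I)$ with the alternating sum, and Snapper's theorem shows this is a polynomial in $m$ of degree at most $\dim\Supp(I'/I)\le d-1$.

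Your approach trades the associated-point trick for d\'evissage, and Snapper's theorem for elementary Hilbert-function bounds on each integral piece $Z_j$. This is slightly more hands-on but has the mild advantage that you never need to check the twist lands above the Serre-vanishing threshold. One small caution: the global d\'evissage statement on a Noetherian scheme gives successive quotients of the form $(\iota_j)_*\mathcal{I}_j$ for nonzero ideal sheaves $\mathcal{I}_j\subset\OO_{Z_j}$, not necessarily $(\iota_j)_*\OO_{Z_j}$; but since $\mathcal{I}_j\hookrightarrow\OO_{Z_j}$ and you only need an upper bound, your inequality $\log\# H^0\le\sum_j\log\# H^0(Z_j,\rest{\mathcal{F}}{Z_j})$ survives unchanged.
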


\begin{proof}
Since $A$ is ample,
there is a positive integer $m_0$
such that
$H^j(X, mA \otimes I'/I) = 0$ for all $m \geq m_0$ and $j > 0$.
We set $\operatorname{Ass}_{\OO_{X}}(I'/I) = \{x_1, \ldots, x_r\}$.
Then, by \cite[Lemma~3.2]{MoCont},
there is $n_0$ independent from $a,b,c,k$ with
the following properties:
\begin{enumerate}
\renewcommand{\labelenumi}{(\roman{enumi})}
\item
For each $i=1, \ldots, r$,
there is a non-zero section $l_i$ of
$H^0(X, n_0A - L_i)$ such that
$l_i(x_j) \not= 0$ for all $j$.

\item
$2(n_0 -1) \geq m_0$.
\end{enumerate}
By (i), we have an injective homomorphism
\[
\begin{CD}
(\pmb{a}\cdot\pmb{L} + (b-c-k)A) \otimes I'/I  @>{\otimes l_1^{\otimes \pmb{a}(1)} \otimes \cdots \otimes l_r^{\otimes \pmb{a}(r)}}>> (n_0\vert \pmb{a}\vert_1 + b-c-k) A \otimes I'/I.
\end{CD}
\]
Note that
\[
n_0\vert \pmb{a}\vert_{1} + b-c-k 
\geq
(n_0 -1)\vert \pmb{a}\vert_{1} + 
\vert \pmb{a}\vert_{1} + b-c-k
\geq 2(n_0 - 1) \geq m_0.
\]
Then we have
\begin{multline*}
\log \# H^0(X, (\pmb{a}\cdot\pmb{L} + (b-c-k)A) \otimes I'/I)\\
 \leq
\log \# H^0(X, (n_0\vert \pmb{a}\vert_{1} + b-c-k) A \otimes I'/I) \\
= \sum_{j\geq 0} (-1)^j\log \# H^j(X, (n_0\vert\pmb{a}\vert_{1} + b-c-k) A \otimes I'/I).
\end{multline*}
Since $\Supp(I'/I)$ is contained in
fibers of $X \to \Spec(\ZZ)$,
by using Snapper's theorem,
we can find a polynomial $P(X) \in \RR[X]$ of degree $\leq d-1$
such that
\[
\sum_{j\geq 0} (-1)^j\log \# H^j(X, (n_0\vert \pmb{a}\vert_{1} + b-c-k) A \otimes I'/I) = P(n_0 \vert \pmb{a}\vert_{1} + b - c -k).
\]
Thus we get the claim.
\end{proof}

\begin{Claim}
\label{claim:thm:h:0:estimate:big:5}
There are constants $C_6$ and $C_7$ depending only on
$\overline{\pmb{L}}$ and
$\overline{A}$ such that
\[
\ah\left(I^0(Y, \rest{\pmb{a}\cdot\pmb{L} + (b-c-k)A}{Y}), 
\Vert\cdot\Vert_{L^2, s^k, \sub,\quot}^{\pmb{a}\cdot\overline{\pmb{L}}+(b-c)\overline{A}}\right)
\leq C_6 \vert \pmb{a}\vert_{1}^{d-1} + C_7 \vert \pmb{a}\vert_{1}^{d-2}\log(\vert \pmb{a}\vert_{1})
\]
for all $\pmb{a} \in \ZZ_{\geq 0}^r$ and
$b, c, k \in \ZZ$ with $\vert \pmb{a}\vert_{1} \geq b \geq c \geq 0$,
$\vert \pmb{a}\vert_{1} \geq 2$ and $0 \leq k < b$.
\end{Claim}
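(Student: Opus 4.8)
The plan is to deduce the estimate on $Y$ from the corresponding estimate on its horizontal part $Y'$ (Claim~\ref{claim:thm:h:0:estimate:big:5:1}) together with the bound on the vertical correction supplied by Claim~\ref{claim:thm:h:0:estimate:big:5:2}. Write $\mathcal{F} = \pmb{a}\cdot\pmb{L} + (b-c-k)A$, and let $I$, $I'$ be the defining ideals of $Y$ and $Y'$, so that $I \subseteq I'$ because $Y'$ is a closed subscheme of $Y$. Since $\mathcal{F}$ is locally free, tensoring $0 \to I'/I \to \OO_X/I \to \OO_X/I' \to 0$ by $\mathcal{F}$ gives an exact sequence
\[
0 \longrightarrow \mathcal{F}\otimes(I'/I) \longrightarrow \rest{\mathcal{F}}{Y} \longrightarrow \rest{\mathcal{F}}{Y'} \longrightarrow 0 .
\]
Passing to global sections, the first map is injective, and since $H^0(X,\mathcal{F})$ surjects onto $I^0(Y,\rest{\mathcal{F}}{Y})$ and, composed with the restriction to $Y'$, onto $I^0(Y',\rest{\mathcal{F}}{Y'})$, I obtain an exact sequence of $\ZZ$-modules
\[
0 \longrightarrow K \longrightarrow I^0\!\left(Y,\rest{\mathcal{F}}{Y}\right) \longrightarrow I^0\!\left(Y',\rest{\mathcal{F}}{Y'}\right) \longrightarrow 0 ,
\]
where $K$ is the intersection of $I^0(Y,\rest{\mathcal{F}}{Y})$ with the image of $H^0(X,\mathcal{F}\otimes(I'/I))$ inside $H^0(Y,\rest{\mathcal{F}}{Y})$. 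As $\Supp(I'/I)$ lies in fibers of $X\to\Spec(\ZZ)$, the group $H^0(X,\mathcal{F}\otimes(I'/I))$ is finite, hence so is $K$, and $\#K \le \# H^0(X,\mathcal{F}\otimes(I'/I))$.

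Next I would put norms on the three terms: on $I^0(Y,\rest{\mathcal{F}}{Y})$ the norm $\Vert\cdot\Vert_{L^2, s^k, \sub,\quot}^{\pmb{a}\cdot\overline{\pmb{L}}+(b-c)\overline{A}}$, on $K$ the restricted norm, and on $I^0(Y',\rest{\mathcal{F}}{Y'})$ the induced quotient norm. The last coincides with the norm appearing in Claim~\ref{claim:thm:h:0:estimate:big:5:1}: both are induced from $\Vert\cdot\Vert_{L^2, s^k, \sub}^{\pmb{a}\cdot\overline{\pmb{L}}+(b-c)\overline{A}}$ on $H^0(\pmb{a}\cdot\pmb{L}+(b-c-k)A)$ through surjections that factor through one another, and the torsion kernel of $I^0(Y,\rest{\mathcal{F}}{Y})\to I^0(Y',\rest{\mathcal{F}}{Y'})$ does not affect the norm after $\otimes_{\ZZ}\RR$. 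Since $K$ is finite, $\ah(K,\cdot)=\log\#K\le\log\# H^0(X,\mathcal{F}\otimes(I'/I))\le C''_6\vert\pmb{a}\vert_1^{d-1}$ by Claim~\ref{claim:thm:h:0:estimate:big:5:2}. Applying \cite[(4) of Proposition~2.1]{MoCont} to the second displayed exact sequence then yields
\[
\ah\!\left(I^0(Y,\rest{\mathcal{F}}{Y}),\Vert\cdot\Vert_{L^2, s^k, \sub,\quot}^{\pmb{a}\cdot\overline{\pmb{L}}+(b-c)\overline{A}}\right) \le \ah(K,\cdot) + \ah\!\left(I^0(Y',\rest{\mathcal{F}}{Y'}),\Vert\cdot\Vert_{L^2, s^k, \sub,\quot}^{\pmb{a}\cdot\overline{\pmb{L}}+(b-c)\overline{A}}\right) + E ,
\]
where $E$ is the error term of \cite[Proposition~2.1]{MoCont}, of the form $\log(18)\,\rho + 2\log(\rho !)$ with $\rho = \rank I^0(Y,\rest{\mathcal{F}}{Y}) = \rank I^0(Y',\rest{\mathcal{F}}{Y'})$, the two ranks agreeing because $K$ is torsion.

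Finally I would estimate $\rho \le \rank H^0\!\left(Y',\rest{a_1(L_1+A)+\cdots+a_r(L_r+A)}{Y'}\right) \le C_3\vert\pmb{a}\vert_1^{d-2}$ exactly as in the proof of Claim~\ref{claim:thm:h:0:estimate:big:5:1}, so that $E = O(\vert\pmb{a}\vert_1^{d-2}\log\vert\pmb{a}\vert_1)$; combining this with the bound $C'_6\vert\pmb{a}\vert_1^{d-1}+C'_7\vert\pmb{a}\vert_1^{d-2}\log\vert\pmb{a}\vert_1$ of Claim~\ref{claim:thm:h:0:estimate:big:5:1} and with Claim~\ref{claim:thm:h:0:estimate:big:5:2}, and collecting constants, gives the asserted inequality with $C_6$ essentially $C'_6+C''_6$ and $C_7$ absorbing $C'_7$ and the implied constant in $E$. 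In the degenerate case $d=1$ one has $Y'=\emptyset$, hence $I'/I=\OO_Y$, and the bound follows directly from Claim~\ref{claim:thm:h:0:estimate:big:5:2}. The step I expect to demand the most care is the last one: one must arrange the exact-sequence inequality so that its error term sees only the rank of the image in the lower-dimensional piece $Y'$, which is of size $O(\vert\pmb{a}\vert_1^{d-2})$, rather than a rank of size $O(\vert\pmb{a}\vert_1^{d-1})$; the torsionness of $K$ is precisely what makes this legitimate, and it is also what guarantees the compatibility of the various quotient norms on $I^0(Y',\rest{\mathcal{F}}{Y'})$.
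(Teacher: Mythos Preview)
Your proposal is correct and follows essentially the same strategy as the paper: pass from $Y$ to its horizontal part $Y'$ via the surjection $I^0(Y,\rest{\mathcal{F}}{Y})\to I^0(Y',\rest{\mathcal{F}}{Y'})$ with torsion kernel contained in $H^0(X,\mathcal{F}\otimes I'/I)$, then invoke Claims~\ref{claim:thm:h:0:estimate:big:5:1} and~\ref{claim:thm:h:0:estimate:big:5:2}.

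The one place where the paper is more economical is your use of \cite[(4) of Proposition~2.1]{MoCont} and the resulting error term $E$. Because the kernel $K$ is torsion, the induced map $I^0(Y,\rest{\mathcal{F}}{Y})\otimes\RR \to I^0(Y',\rest{\mathcal{F}}{Y'})\otimes\RR$ is an isometry, so the small-section count on $I^0(Y,\cdot)$ is at most $\#K$ times that on $I^0(Y',\cdot)$ by elementary fiber counting; this gives directly
\[
\ah\bigl(I^0(Y,\rest{\mathcal{F}}{Y}),\Vert\cdot\Vert\bigr)\ \le\ \ah\bigl(I^0(Y',\rest{\mathcal{F}}{Y'}),\Vert\cdot\Vert\bigr)\ +\ \log\#H^0(X,\mathcal{F}\otimes I'/I),
\]
with no additional error term. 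Thus the concern you flag at the end --- making sure $E$ only sees a rank of order $\vert\pmb{a}\vert_1^{d-2}$ --- simply does not arise in the paper's argument, and your constants $C_6,C_7$ can be taken to be exactly $C'_6+C''_6$ and $C'_7$.
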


\begin{proof}
Since the kernel of the natural surjective homomorphism
\[
I^0(Y, \rest{\pmb{a}\cdot\pmb{L} + (b-c-k)A}{Y}) \to I^0(Y', \rest{\pmb{a}\cdot\pmb{L} + (b-c-k)A}{Y'})
\]
is contained in the torsion group
$H^0(X, (\pmb{a}\cdot\pmb{L} + (b-c-k)A) \otimes I'/I)$,
we have
\begin{multline*}
\ah\left(I^0(Y, \rest{\pmb{a}\cdot\pmb{L} + (b-c-k)A}{Y}), 
\Vert\cdot\Vert_{L^2, s^k, \sub,\quot}^{\pmb{a}\cdot\overline{\pmb{L}}+(b-c)\overline{A}}\right) \\
\leq
\ah\left(I^0(Y', \rest{\pmb{a}\cdot\pmb{L} + (b-c-k)A}{Y'}), 
\Vert\cdot\Vert_{L^2, s^k, \sub,\quot}^{\pmb{a}\cdot\overline{\pmb{L}}+(b-c)\overline{A}}\right) \\
+ 
\log \# H^0(X, (\pmb{a}\cdot\pmb{L} + (b-c-k)A) \otimes I'/I).
\end{multline*}
Thus our claim follows from
Claim~\ref{claim:thm:h:0:estimate:big:5:1} and
Claim~\ref{claim:thm:h:0:estimate:big:5:2}.
\end{proof}

\medskip
Let us start the proof of Lemma~\ref{lem:thm:h:0:estimate:big:2}.
A commutative diagram
\[
\begin{CD}
0 @>>> -(k+1)A @>{s^{k+1}}>> \OO_X @>>> \OO_{(k+1)Y} @>>> 0 \\
@. @VV{s}V @| @VVV @. \\
0 @>>> -kA @>{s^k}>> \OO_X @>>> \OO_{kY} @>>> 0 \\
@. @VVV @. @. @. \\
  @. \rest{-kA}{Y} @.   @.  @.
\end{CD}
\]
yields an injective homomorphism
$\alpha_k : \rest{-kA}{Y} \to \OO_{(k+1)Y}$ together with
a commutative diagram
\[
\begin{CD}
0 @>>> -kA @>{s^k}>> \OO_X @>>> \OO_{kY} @>>> 0 \\
@. @VVV @VVV @| @. \\
0 @>>> \rest{-kA}{Y} @>{\alpha_k}>> \OO_{(k+1)Y} @>>> \OO_{kY} @>>> 0,
\end{CD}
\]
where two horizontal sequence are exact.
Thus, tensoring with
$\pmb{a}\cdot\pmb{L} + (b-c)A$,
we have the following commutative diagram:
\[
{
\arraycolsep =  0.3ex
\renewcommand{\arraystretch}{1.6}
\begin{array}{ccccccccc}
0 & \to & \pmb{a}\cdot\pmb{L} + (b-c-k)A & \overset{s^k}{\longrightarrow} & \pmb{a}\cdot\pmb{L} + (b-c)A & \to & \rest{\pmb{a}\cdot\pmb{L} + (b-c)A}{kY} & \to & 0 \\
   &       & \Big\downarrow & & \Big\downarrow & & \Big|\!\Big| & & \\
0 & \to & \rest{\pmb{a}\cdot\pmb{L} + (b-c-k)A}{Y} & \overset{\alpha_k}{\longrightarrow} & \rest{\pmb{a}\cdot\pmb{L} + (b-c)A}{(k+1)Y} & \to & \rest{\pmb{a}\cdot\pmb{L} + (b-c)A}{kY} & \to & 0.
\end{array}}
\]
Therefore, we get an exact sequence
\begin{multline*}
0 \to I^0(\rest{(\pmb{a}\cdot\pmb{L} + (b-c-k)A)}{Y}) \to
I^0(\rest{(\pmb{a}\cdot\pmb{L} + (b-c)A)}{(k+1)Y}) \\
\to I^0(\rest{(\pmb{a}\cdot\pmb{L} + (b-c)A)}{kY}) \to 0.
\end{multline*}
Note that in a commutative diagram
\[
\begin{CD}
H^0(\pmb{a}\cdot\pmb{L} + (b-c-k)A) @>{s^k}>> H^0(\pmb{a}\cdot\pmb{L} + (b-c)A) \\
@VVV @VVV \\
I^0(\rest{(\pmb{a}\cdot\pmb{L} + (b-c-k)A)}{Y}) @>{\alpha_k}>>  I^0( \rest{(\pmb{a}\cdot\pmb{L} + (b-c)A)}{(k+1)Y}),
\end{CD}
\]
two vertical homomorphisms have the same kernel.
Thus, by \cite[Lemma~3.4]{MoCont},
\begin{multline*}
0 \to
\left(I^0(\rest{(\pmb{a}\cdot\pmb{L} + (b-c-k)A)}{Y}), 
\Vert\cdot\Vert_{L^2, s^k, \sub,\quot}^{\pmb{a}\cdot\overline{\pmb{L}} + (b-c)\overline{A}}\right) \\
\to \left(I^0(\rest{(\pmb{a}\cdot\pmb{L} + (b-c)A)}{(k+1)Y}), 
\Vert\cdot\Vert_{L^2,\quot}^{\pmb{a}\cdot\overline{\pmb{L}} + (b-c)\overline{A}}\right) \\
\to \left(I^0(\rest{(\pmb{a}\cdot\pmb{L} + (b-c)A)}{kY}), 
\Vert\cdot\Vert_{L^2,\quot}^{\pmb{a}\cdot\overline{\pmb{L}} + (b-c)\overline{A}}\right) \to 0
\end{multline*}
is a normed exact sequence, where, for each $i$,
the norm of
\[
\left( I^0(\rest{(\pmb{a}\cdot\pmb{L} + (b-c)A)}{iY}), \Vert\cdot\Vert_{L^2,\quot}^{\pmb{a}\cdot\overline{\pmb{L}} + (b-c)\overline{A}}\right)
\]
is the quotient norm
induced by the surjective homomorphism
$H^0(\pmb{a}\cdot\pmb{L} + (b-c)A) \to I^0(\rest{(\pmb{a}\cdot\pmb{L} + (b-c)A)}{iY})$
and
$L^2$-norm $\Vert\cdot\Vert_{L^2}^{\pmb{a}\cdot\overline{\pmb{L}} + (b-c)\overline{A}}$
of $H^0(\pmb{a}\cdot\pmb{L} + (b-c)A)$.
Note that
\[
\rank H^0(\rest{\pmb{a}\cdot\pmb{L} + (b-c-k)A}{Y}) \leq 
\rank H^0(\rest{a_1(L_1+A)+\cdots+a_r(L_r+A)}{Y}).
\]
Thus, by using \cite[(4) of Proposition~2.1]{MoCont},
\begin{multline*}
\ah\left(I^0(\rest{(\pmb{a}\cdot\pmb{L} + (b-c)A)}{(k+1)Y}), 
\Vert\cdot\Vert_{L^2,\quot}^{\pmb{a}\cdot\overline{\pmb{L}} + (b-c)\overline{A}}\right) \\
- \ah\left(I^0(\rest{(\pmb{a}\cdot\pmb{L} + (b-c)A)}{kY}), 
\Vert\cdot\Vert_{L^2,\quot}^{\pmb{a}\cdot\overline{\pmb{L}} + (b-c)\overline{A}}\right) \\
\leq \ah\left(I^0(\rest{(\pmb{a}\cdot\pmb{L} + (b-c-k)A)}{Y}), 
\Vert\cdot\Vert_{L^2, s^k, \sub,\quot}^{\pmb{a}\cdot\overline{\pmb{L}} + (b-c)\overline{A}}\right) +C_4 \vert \pmb{a}\vert_{1}^{d-2}\log(\vert \pmb{a}\vert_{1}).
\end{multline*}
Therefore, taking $\sum\nolimits_{k=1}^{b-1}$,
the above inequalities imply
\begin{multline*}
\ah\left(I^0(\rest{(\pmb{a}\cdot\pmb{L} + (b-c)A)}{bY}), 
\Vert\cdot\Vert_{L^2,\quot}^{\pmb{a}\cdot\overline{\pmb{L}} + (b-c)\overline{A}}\right) \\
-
\ah\left(I^0(\rest{(\pmb{a}\cdot\pmb{L} + (b-c)A)}{Y}), 
\Vert\cdot\Vert_{L^2,\quot}^{\pmb{a}\cdot\overline{\pmb{L}} + (b-c)\overline{A}}\right) 
\\
\leq \sum_{k=1}^{b-1}
\ah\left(I^0(\rest{(\pmb{a}\cdot\pmb{L} + (b-c-k)A)}{Y}), 
\Vert\cdot\Vert_{L^2, s^k, \sub,\quot}^{\pmb{a}\cdot\overline{\pmb{L}} + (b-c)\overline{A}}\right) \\
+ (b-1)C_4 \vert \pmb{a}\vert_{1}^{d-2}\log(\vert \pmb{a}\vert_{1}).
\end{multline*}
Hence, by using Claim~\ref{claim:thm:h:0:estimate:big:5},
we have Lemma~\ref{lem:thm:h:0:estimate:big:2}.
\end{proof}

\renewcommand{\theTheorem}{\arabic{section}.\arabic{Theorem}}
\renewcommand{\theClaim}{\arabic{section}.\arabic{Theorem}.\arabic{Claim}}
\renewcommand{\theequation}{\arabic{section}.\arabic{Theorem}}

\section{Arithmetic volume function}

Let $X$ be a $d$-dimensional projective arithmetic variety.
Let $\overline{L}$ be a continuous hermitian invertible sheaf on $X$
(cf. Conventions and terminology~\ref{CT:cont:herm:inv:sheaf}).
As mentioned in Conventions and terminology~\ref{CT:small:sec},
$\ah(\overline{L})$ is given by
\[
\ah(\overline{L}) = \log \# \left\{ s \in H^0(X, L) \mid \Vert s \Vert_{\sup} \leq 1 \right\}.
\]
In the same way as in \cite{MoCont},
the arithmetic volume $\avol(\overline{L})$ of $\overline{L}$ is defined by
\[
\avol(\overline{L}) = \limsup_{m\to\infty} \frac{\ah(m\overline{L})}{m^d/d!}.
\]
By virtue of Chen's theorem \cite{Chen}, $\avol(\overline{L})$ is actually given by
\[
\avol(\overline{L}) = \lim_{m\to\infty} \frac{\ah(m\overline{L})}{m^d/d!}.
\]

Fix a volume form $\Phi$ on $X(\CC)$.
For a real number $p \geq 1$, let $\Vert\cdot\Vert_{L^p}$ be the $L^p$-norm of
$H^0(X(\CC), L_{\CC})$ induced by the hermitian metric $\vert\cdot\vert$ of $\overline{L}$
and
$\Phi$, that is,
\[
\Vert s \Vert_{L^p} := \left( \int_{X(\CC)} \vert s \vert^p \Phi \right)^{1/p}
\qquad (s \in H^0(X(\CC), L_{\CC})).
\]
Similarly as above, we can define a volume with respect to the $L^p$-norm to be
\[
\avol_{L^p}(\overline{L}) = \limsup_{m\to\infty}
\frac{\log \#\{ s \in H^0(X, mL) \mid \Vert s \Vert_{L^p} \leq 1\}}{m^d/d!}.
\]
Then, using Gromov's inequality \cite[Corollary~1.1.2]{MoCont} and
Chen's result \cite{Chen}, it is easy to see that
$\avol_{L^p}(\overline{L}) = \avol(\overline{L})$ and
\[
\avol_{L^p}(\overline{L}) = \lim_{m\to\infty}
\frac{\log \#\{ s \in H^0(X, mL) \mid \Vert s \Vert_{L^p} \leq 1\}}{m^d/d!}.
\]

For the definitions of $C^0(X)$ and $\aPic_{C^0}(X)$, see
Conventions and terminology~\ref{CT:cont:herm:inv:sheaf}.
Let $\overline{\OO} : C^0(X) \to \aPic_{C^0}(X)$ be a homomorphism
defined by
\[
f \mapsto \overline{\OO}(f) := (\OO_X, \exp(-f)\vert\cdot\vert_{can}).
\]
Let $\zeta : \aPic_{C^0}(X) \to \Pic(X)$ be a natural homomorphism
given by forgetting the equipped hermitian metric.
Then the following sequence
\addtocounter{Theorem}{1}
\begin{equation}
\label{eqn:exact:C0:aPic:Pic}
\begin{CD}
C^0(X) @>{\overline{\OO}}>> \aPic_{C^0}(X) @>{\zeta}>> \Pic(X) @>>> 0
\end{CD}
\end{equation}
is exact.
It is easy to see that
\addtocounter{Theorem}{1}
\begin{equation}
\label{eqn:kernel:OO}
\Ker(\overline{\OO}) = \left\{ \log \vert u \vert \mid u \in H^0(X, \OO_{X}^{\times}) \right\}.
\end{equation}
Note that the natural homomorphism $C^0(X) \otimes_{\ZZ} \QQ \to C^0(X)$ ($f \otimes a \mapsto af$)
is an isomorphism.
Thus \eqref{eqn:exact:C0:aPic:Pic} yields an exact sequence
\addtocounter{Theorem}{1}
\begin{equation}
\label{eqn:exact:C0:aPic:Pic:QQ}
\begin{CD}
C^0(X) @>{\overline{\OO}}>> \aPic_{C^0}(X) \otimes_{\ZZ} \QQ @>{\zeta \otimes \operatorname{id}_{\QQ}}>> \Pic(X) \otimes_{\ZZ} \QQ @>>> 0
\end{CD}
\end{equation}
together with the following commutative diagram:
\[
\begin{CD}
C^0(X) @>{\overline{\OO}}>> \aPic_{C^0}(X) @>{\zeta}>> \Pic(X) @>>> 0 \\
@| @VVV @VVV \\
C^0(X) @>{\overline{\OO}}>> \aPic_{C^0}(X) \otimes_{\ZZ} \QQ @>{\zeta \otimes \operatorname{id}_{\QQ}}>> \Pic(X) \otimes_{\ZZ} \QQ @>>> 0. 
\end{CD}
\]
Let us begin with the following lemma.

\begin{Lemma}
\label{lem:Stone:Weierstrass}
Let $\overline{L}$ be a continuous hermitian $\QQ$-invertible sheaf on $X$
\rom{(}cf. Conventions and terminology~\rom{\ref{CT:cont:herm:inv:sheaf}}\rom{)}.
For any positive real number $\epsilon$,
there is $\phi \in C^0(X)$ such that $\Vert \phi \Vert_{\sup} \leq\epsilon$ and
$\overline{L} + \overline{\OO}(\phi)$ is $C^{\infty}$.
\end{Lemma}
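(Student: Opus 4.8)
The plan is to deduce the lemma from the density, for the sup norm, of $C^{\infty}$ functions in continuous functions on the compact space $X(\CC)$, after a routine bookkeeping reduction in $\aPic_{C^0}(X)\otimes_{\ZZ}\QQ$. Since $\overline{L}\in\aPic_{C^0}(X)\otimes_{\ZZ}\QQ$, I would first clear denominators and write $\overline{L}=(1/n)[\overline{M}]$ for some positive integer $n$ and some continuous hermitian invertible sheaf $\overline{M}=(M,h)$ on $X$: a finite sum $\sum_i\overline{L}_i\otimes a_i$ with $a_i\in\QQ$ equals $(1/n)\bigl(\sum_i b_i\overline{L}_i\bigr)\otimes 1$ with $b_i=na_i\in\ZZ$, and $\sum_i b_i\overline{L}_i$ is again a single continuous hermitian invertible sheaf. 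Because $\overline{\OO}$ is a homomorphism, $\overline{\OO}(n\phi)=n\,\overline{\OO}(\phi)$, so for any $\phi\in C^0(X)$ one has $\overline{L}+\overline{\OO}(\phi)=(1/n)\bigl[\overline{M}+\overline{\OO}(n\phi)\bigr]$ in $\aPic_{C^0}(X)\otimes_{\ZZ}\QQ$; hence it suffices to arrange that $\overline{M}+\overline{\OO}(n\phi)$ is a $C^{\infty}$-hermitian invertible sheaf while $\Vert\phi\Vert_{\sup}\le\epsilon$.

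Next I would fix a $C^{\infty}$-hermitian metric $h_0$ on $M$; such a metric exists, for instance by writing $M$ as a difference of two very ample invertible sheaves and pulling back Fubini--Study metrics, and then symmetrizing so that $h_0$ is $F_{\infty}$-invariant. The ratio $h/h_0$ is then a positive, $F_{\infty}$-invariant continuous function on $X(\CC)$, so we may write $h=\exp(-\psi)h_0$ with $\psi\in C^0(X)$. For any $\phi\in C^0(X)$ this gives $\overline{M}+\overline{\OO}(n\phi)=(M,\exp(-n\phi-\psi)h_0)$, which is a $C^{\infty}$-hermitian invertible sheaf precisely when $n\phi+\psi\in C^{\infty}(X)$. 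Thus the whole problem reduces to finding $\phi\in C^0(X)$ with $\Vert\phi\Vert_{\sup}\le\epsilon$ and $n\phi+\psi\in C^{\infty}(X)$.

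Finally, by the Stone--Weierstrass theorem the $F_{\infty}$-invariant $C^{\infty}$ functions are dense in $C^0(X)$ for the sup norm: choose $\chi'\in C^{\infty}(X(\CC))$ with $\Vert\chi'-\psi\Vert_{\sup}\le n\epsilon$ and replace it by $\chi:=(\chi'+F_{\infty}^{*}\chi')/2\in C^{\infty}(X)$, which stays within $n\epsilon$ of $\psi$ because $F_{\infty}^{*}\psi=\psi$. Setting $\phi:=(\chi-\psi)/n\in C^0(X)$ yields $\Vert\phi\Vert_{\sup}\le\epsilon$ and $n\phi+\psi=\chi\in C^{\infty}(X)$, so $\overline{L}+\overline{\OO}(\phi)=(1/n)\bigl[(M,\exp(-\chi)h_0)\bigr]$ is $C^{\infty}$, as required. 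I do not expect a genuine obstacle here; the only points demanding a little care are the existence of the background metric $h_0$ when $X(\CC)$ may be singular (handled using the conventions of \cite{MoCont} for $C^{\infty}$ objects) and keeping every object $F_{\infty}$-invariant, which the symmetrization step makes automatic.
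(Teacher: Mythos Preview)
Your proof is correct and follows essentially the same approach as the paper: clear denominators to pass to an honest continuous hermitian invertible sheaf $\overline{M}$, write its metric as $\exp(-\psi)$ times a fixed $C^{\infty}$ reference metric, approximate $\psi$ by a $C^{\infty}$ function via Stone--Weierstrass, and rescale by $1/n$. Your version is slightly more explicit about the existence of the background $C^{\infty}$ metric and about maintaining $F_{\infty}$-invariance via symmetrization, but these are expository refinements rather than differences in strategy.
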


\begin{proof}
Let us choose a positive integer $n$ and
a continuous hermitian invertible sheaf $\overline{M}$ such that
$n \overline{L} = \overline{M} \otimes 1$ in $\aPic_{C^0}(X) \otimes_{\ZZ} \QQ$.
Let $\vert\cdot\vert$ be the metric of $\overline{M}$ and fix a
$C^{\infty}$-metric $\vert\cdot\vert_0$ of $M$.
Then there is $f \in C^0(X)$ such that $\vert\cdot\vert = \exp(-f)\vert\cdot\vert_0$.
By Stone-Weierstrass theorem,
there is $g \in C^{\infty}(X)$ such that $\Vert f - g \Vert_{\sup} \leq n \epsilon$.
Note that $\exp(-(f-g)) \vert\cdot\vert = \exp(g)\vert\cdot\vert_0$.
Thus $\overline{M} + \overline{\OO}(f-g)$ is a $C^{\infty}$-hermitian invertible sheaf.
Therefore, if we set $\phi = (f-g)/n$, then $\Vert \phi \Vert_{\sup} \leq \epsilon$ and
\[
n(\overline{L} + \overline{\OO}(\phi)) = \overline{M} \otimes 1 + \overline{\OO}(f-g) =
(\overline{M} + \overline{\OO}(f-g)) \otimes 1.
\]
Hence the lemma follows.
\end{proof}

\begin{Proposition}
\label{prop:avol:hom:cont}
For a continuous hermitian invertible sheaf $\overline{L}$ on $X$,
we have the following:
\begin{enumerate}
\renewcommand{\labelenumi}{\rom{(\arabic{enumi})}}
\item
$\vert \avol(\overline{L} + \overline{\OO}(f)) - \avol(\overline{L}) \vert \leq d \Vert f \Vert_{\sup} \vol(L_{\QQ})$
for $f \in C^0(X)$.

\item
$\avol(n\overline{L}) = n^d \avol(\overline{L})$ for a non-negative integer $n$.

\item
If $\nu : X' \to X$ is  a birational morphism of projective arithmetic varieties, then
$\avol(\nu^*(\overline{L})) = \avol(\overline{L})$  for any $\overline{L} \in \aPic_{C^0}(X)$.
\end{enumerate}
\end{Proposition}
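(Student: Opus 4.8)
The plan is to establish the three assertions in the order (2), (1), (3). Assertion (2) is essentially formal. For $n=0$ we have $0\cdot\overline{L}=(\OO_X,\vert\cdot\vert_{can})$, whose nonnegative multiples are all equal to $(\OO_X,\vert\cdot\vert_{can})$, so that $\ah(m(0\cdot\overline{L}))$ does not depend on $m$ and hence $\avol(0\cdot\overline{L})=0=0^d\avol(\overline{L})$. For $n\geq 1$, Chen's theorem \cite{Chen} gives $\ah(k\overline{L})/(k^d/d!)\to\avol(\overline{L})$ as $k\to\infty$, hence also along the subsequence $k=mn$; multiplying by $n^d$,
\[
\frac{\ah(mn\overline{L})}{m^d/d!}=n^d\cdot\frac{\ah(mn\overline{L})}{(mn)^d/d!}\longrightarrow n^d\avol(\overline{L})\qquad(m\to\infty),
\]
which is (2).

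For (1) the key is a pointwise comparison of metrics. Since $m(\overline{L}+\overline{\OO}(f))=(mL,\exp(-mf)\vert\cdot\vert_{m\overline{L}})$, for every $s\in H^0(X,mL)$ one has
\[
\exp(-m\Vert f\Vert_{\sup})\,\Vert s\Vert_{\sup,\,m\overline{L}}\ \leq\ \Vert s\Vert_{\sup,\,m(\overline{L}+\overline{\OO}(f))}\ \leq\ \exp(m\Vert f\Vert_{\sup})\,\Vert s\Vert_{\sup,\,m\overline{L}}.
\]
Comparing the corresponding sets of small sections and invoking the scaling estimate in \cite[Proposition~2.1]{MoCont}, one gets
\[
\ah\bigl(m(\overline{L}+\overline{\OO}(f))\bigr)\ \leq\ \ah(m\overline{L})+m\Vert f\Vert_{\sup}\,\rank_{\ZZ}H^0(X,mL)+o(m^d).
\]
Dividing by $m^d/d!$ and letting $m\to\infty$, using $\rank_{\ZZ}H^0(X,mL)=\dim_{\QQ}H^0(X_{\QQ},mL_{\QQ})$ together with $\dim_{\QQ}H^0(X_{\QQ},mL_{\QQ})/(m^{d-1}/(d-1)!)\to\vol(L_{\QQ})$ (the geometric volume being a genuine limit) and Chen's theorem to identify the arithmetic limits, we obtain $\avol(\overline{L}+\overline{\OO}(f))\leq\avol(\overline{L})+d\Vert f\Vert_{\sup}\vol(L_{\QQ})$. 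Since $\overline{L}=(\overline{L}+\overline{\OO}(f))+\overline{\OO}(-f)$, $\Vert -f\Vert_{\sup}=\Vert f\Vert_{\sup}$, and $\overline{L}+\overline{\OO}(f)$ has underlying invertible sheaf $L$, applying the inequality just proved to $\overline{L}+\overline{\OO}(f)$ and $-f$ gives the reverse inequality, proving (1).

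For (3), the inequality $\avol(\overline{L})\leq\avol(\nu^*\overline{L})$ is immediate: $\nu$ is surjective (proper and dominant), $\nu^{*}\colon H^0(X,mL)\to H^0(X',\nu^*mL)$ is injective, and, because $\vert\nu^*s\vert(x')=\vert s\vert(\nu(x'))$ for the pullback metric, $\Vert\nu^*s\Vert_{\sup}=\Vert s\Vert_{\sup}$; thus $\nu^*$ embeds $H^0(X,mL)$ isometrically into $H^0(X',\nu^*mL)$ and $\ah(m\overline{L})\leq\ah(m\nu^*\overline{L})$. For the opposite inequality there are two routes. The first reduces to the $C^\infty$ case: given $\epsilon>0$, Lemma~\ref{lem:Stone:Weierstrass} yields $\phi\in C^0(X)$ with $\Vert\phi\Vert_{\sup}\leq\epsilon$ and $\overline{M}:=\overline{L}+\overline{\OO}(\phi)$ $C^\infty$; then $\nu^*\overline{M}=\nu^*\overline{L}+\overline{\OO}(\nu^*\phi)$ is $C^\infty$ with $\Vert\nu^*\phi\Vert_{\sup}\leq\epsilon$ and $\vol((\nu^*L)_{\QQ})=\vol(L_{\QQ})$, so combining (1) on $X$ and on $X'$ with birational invariance of $\avol$ for $C^\infty$-hermitian invertible sheaves from \cite{MoCont} gives $\vert\avol(\nu^*\overline{L})-\avol(\overline{L})\vert\leq 2d\epsilon\vol(L_{\QQ})$, and $\epsilon\to 0$ finishes. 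The second is direct: from $0\to\OO_X\to\nu_*\OO_{X'}\to Q\to 0$ with $Q:=\nu_*\OO_{X'}/\OO_X$, tensoring with the locally free $mL$ yields a normed short exact sequence $0\to H^0(X,mL)\overset{\nu^*}{\longrightarrow}H^0(X',\nu^*mL)\to I^0_m\to 0$ with $I^0_m\subseteq H^0(X,mL\otimes Q)$; since $\nu^*$ is isometric, \cite[Proposition~2.1]{MoCont} reduces matters to $\ah(I^0_m,\Vert\cdot\Vert_{\quot})=o(m^d)$, which follows because $\Supp Q$ is a closed subscheme of dimension $\leq d-1$, by restricting to such a subscheme and invoking \cite[Lemma~3.3]{MoCont} together with Gromov's inequality, exactly as in the proof of Claim~\ref{claim:thm:h:0:estimate:big:5:1}.

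The main obstacles I anticipate are the careful bookkeeping in (1) --- extracting precisely the constant $d\Vert f\Vert_{\sup}\vol(L_{\QQ})$ from the scaling estimate while keeping every error term $o(m^d)$ and passing from $\limsup$ to $\lim$ via Chen's theorem --- and, in the self-contained proof of (3), the estimate $\ah(I^0_m,\Vert\cdot\Vert_{\quot})=o(m^d)$; the former is routine once the geometric volume is recognized as a genuine limit, while the latter is the one point where an estimate on a lower-dimensional subscheme is really needed and is avoided altogether if one adopts the reduction to the $C^\infty$ case.
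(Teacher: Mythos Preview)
Your argument is correct. For (1) and for the first route in (3) you follow essentially the paper's own proof: the paper also sandwiches $\overline{\OO}(f)$ between $\overline{\OO}(\pm\Vert f\Vert_{\sup})$ and invokes \cite[Proposition~2.1,(3)]{MoCont} exactly as you do, and for (3) it likewise reduces to the $C^\infty$ case via Lemma~\ref{lem:Stone:Weierstrass}, applies (1) on both $X$ and $X'$, and quotes the $C^\infty$ birational invariance from \cite[Theorem~4.3]{MoCont}. Your ``second route'' for (3) is an independent alternative the paper does not pursue; it is plausible but the bound $\ah(I^0_m,\Vert\cdot\Vert_{\quot})=o(m^d)$ would need more care than the analogy with Claim~\ref{claim:thm:h:0:estimate:big:5:1} suggests, so it is just as well that you do not rely on it.

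The one genuine divergence is in (2). The paper does \emph{not} argue via Chen's theorem: instead it first proves (1), then uses Lemma~\ref{lem:Stone:Weierstrass} to perturb $\overline{L}$ to a $C^\infty$ sheaf $\overline{L}+\overline{\OO}(\phi)$ with $\Vert\phi\Vert_{\sup}\leq\epsilon$, invokes the $C^\infty$ homogeneity $\avol(n(\overline{L}+\overline{\OO}(\phi)))=n^d\avol(\overline{L}+\overline{\OO}(\phi))$ from \cite[Proposition~4.8]{MoCont}, and uses (1) twice to bound $\vert\avol(n\overline{L})-n^d\avol(\overline{L})\vert$ by $2n^dd\vol(L_{\QQ})\epsilon$. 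Your approach---passing to the subsequence $k=mn$ once Chen guarantees the limit exists for continuous metrics---is shorter and avoids the dependence on (1); it is legitimate here because the paper asserts Chen's theorem for $C^0$ metrics before this proposition. The paper's route, by contrast, keeps the argument independent of whether Chen's result is stated for $C^0$ or only $C^\infty$ metrics, at the cost of proving (1) first and a slightly longer computation.
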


\begin{proof}
(1) We set $\lambda = \Vert f \Vert_{\sup}$. Then $-\lambda \leq f \leq \lambda$.
Thus it is easy to see that
\[
\avol(\overline{L} + \overline{\OO}(-\lambda)) \leq \avol(\overline{L} + \overline{\OO}(f))
\leq  \avol(\overline{L} + \overline{\OO}(\lambda)).
\]
Therefore, it is a consequence of
\cite[Proposition~2.1, (3)]{MoCont} in the same way as in 
\cite[Proposition~4.2]{MoCont}.

(2) 
Fix a positive integer $n$.
By Lemma~\ref{lem:Stone:Weierstrass}, for any positive number $\epsilon$,
there is $\phi \in C^0(X)$ such that $\Vert \phi \Vert_{\sup} \leq \epsilon$ and
$\overline{L} + \overline{\OO}(\phi)$ is $C^{\infty}$.
Hence, by \cite[Propostion~4.8]{MoCont},
$\avol(n(\overline{L} + \overline{\OO}(\phi))) = n^d \avol(\overline{L} + \overline{\OO}(\phi))$.
Therefore, using (1), we have
\begin{multline*}
\vert \avol(n\overline{L}) - n^d\avol(\overline{L}) \vert  \leq \vert \avol(n\overline{L}) - \avol(n(\overline{L} + \overline{\OO}(\phi))) \vert + \vert \avol(n(\overline{L} + \overline{\OO}(\phi))) - n^d\avol(\overline{L}) \vert \\
\hspace{8em} \leq d \Vert n\phi \Vert_{\sup} \vol(nL_{\QQ}) + n^d\vert \avol(\overline{L} + \overline{\OO}(\phi)) - \avol(\overline{L}) \vert \\
\hspace{14em} \leq d (n \Vert \phi \Vert_{\sup})(n^{d-1} \vol(L_{\QQ})) + n^d (d \Vert \phi \Vert_{\sup} \vol(L_{\QQ})) \\
= 2n^dd\vol(L_{\QQ}) \Vert \phi \Vert_{\sup} \leq (2n^dd\vol(L_{\QQ}))\epsilon.
\end{multline*}
Here $\epsilon$ is arbitrary. The above estimation implies (2).

(3) By \cite[Theorem~4.3]{MoCont}, (3) holds if $\overline{L}$ is $C^{\infty}$.
For a positive real number $\epsilon$, by Lemma~\ref{lem:Stone:Weierstrass}, we can find
$\phi \in C^0(X)$  such that
$\overline{L} + \overline{\OO}(\phi)$ is $C^{\infty}$ and $\Vert \phi \Vert_{\sup} \leq \epsilon$.
Then, by (1),
\[
\begin{cases}
\vert \avol(\overline{L} + \overline{\OO}(\phi)) - \avol(\overline{L}) \vert \leq d\epsilon \vol(L_{\QQ}), \\
\vert \avol(\nu^*(\overline{L} + \overline{\OO}(\phi))) - \avol(\nu^*(\overline{L})) \vert \leq d \Vert \nu^*(\phi)\Vert_{\sup} \vol(\nu^*(L_{\QQ})) = d \epsilon \vol(L_{\QQ}).
\end{cases}
\]
Thus,
\begin{multline*}
\vert \avol(\nu^*(\overline{L})) - \avol(\overline{L}) \vert
\leq
\vert  \avol(\nu^*(\overline{L})) - \avol(\nu^*(\overline{L} + \overline{\OO}(\phi))) \vert \\
+
\vert \avol(\nu^*(\overline{L} + \overline{\OO}(\phi))) - \avol(\overline{L} + \overline{\OO}(\phi)) \vert 
+
\vert \avol(\overline{L} + \overline{\OO}(\phi)) - \avol(\overline{L}) \vert \\
\leq \epsilon (2d \vol(L_{\QQ})). 
\end{multline*}
Therefore we get (3).
\end{proof}

By virtue of (2) of the above proposition, 
if $\overline{L} \otimes \alpha = \overline{M} \otimes \beta$ in $\aPic_{C^0}(X) \otimes_{\ZZ} \QQ$
for $\overline{L}, \overline{M} \in \aPic_{C^0}(X)$ and $\alpha, \beta \in \QQ_{\geq 0}$,
then $\alpha^d \avol(\overline{L}) = \beta^d \avol(\overline{M})$.
Indeed, we choose a positive integer $m$ such that $m \alpha, m \beta \in \ZZ$.
Then 
\[
m \alpha \overline{L} \otimes 1 = m(\overline{L} \otimes \alpha) = m(\overline{M} \otimes \beta)
= m\beta \overline{M} \otimes 1.
\]
Thus there is a positive integer $n$ with $nm\alpha \overline{L} = nm\beta \overline{M}$,
which implies $(nm\alpha)^d \avol(\overline{L}) = (nm\beta)^d \avol(\overline{M})$
by (2) of the above proposition. Hence $\alpha^d \avol(\overline{L}) = \beta^d \avol(\overline{M})$.
Therefore, we can define $\avol : \aPic_{C^0}(X) \otimes_{\ZZ} \QQ \to \RR$ to be
$\avol(\overline{L} \otimes \alpha) = \alpha^d \avol(\overline{L})$, where
$\overline{L} \in \aPic_{C^0}(X)$ and $\alpha \in \QQ_{\geq 0}$.
By the definition of $\avol$,  a diagram
\[
\xymatrix{
\aPic_{C^0}(X) \ar[r]^(.55){\avol} \ar[d] & \RR \\
\aPic_{C^0}(X) \otimes_{\ZZ} \QQ \ar[ru]_{\avol} & \\
}
\]
is commutative.
The following proposition is an immediate consequence of
the above proposition.

\begin{Proposition}
\label{prop:avol:hom:cont:QQ}
For $\overline{L} \in \aPic_{C^0}(X) \otimes_{\ZZ} \QQ$, the following hold:
\begin{enumerate}
\renewcommand{\labelenumi}{\rom{(\arabic{enumi})}}
\item
$\vert \avol(\overline{L} + \overline{\OO}(f)) - \avol(\overline{L}) \vert \leq d \Vert f \Vert_{\sup} \vol(L_{\QQ})$
for $f  \in C^0(X)$.

\item
$\avol(a\overline{L}) = a^d \avol(\overline{L})$ for a non-negative rational number $a$.

\item
If $\nu : X' \to X$ is a birational morphism of projective arithmetic varieties, then
$\avol(\nu^*(\overline{L})) = \avol(\overline{L})$  for any $\overline{L} \in \aPic_{C^0}(X) \otimes_{\ZZ} \QQ$.
\end{enumerate}
\end{Proposition}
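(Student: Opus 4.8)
The plan is to deduce all three statements from their counterparts for honest continuous hermitian invertible sheaves, namely Proposition~\ref{prop:avol:hom:cont}, simply by clearing denominators. Write $\overline{L} = \overline{M} \otimes \alpha$ with $\overline{M} \in \aPic_{C^0}(X)$ and $\alpha \in \QQ_{\geq 0}$, and fix a positive integer $m$ with $m\alpha \in \ZZ$. Put $\overline{N}_0 := (m\alpha)\cdot\overline{M} \in \aPic_{C^0}(X)$, so that $m\cdot\overline{L} = \overline{N}_0 \otimes 1$; by the very definition of $\avol$ on $\aPic_{C^0}(X)\otimes_{\ZZ}\QQ$ (whose well-definedness was recorded above using Proposition~\ref{prop:avol:hom:cont}~(2)), we have $\avol(\overline{L}) = m^{-d}\avol(\overline{N}_0)$, and likewise $\avol(\overline{K}) = m^{-d}\avol(\overline{N})$ for any $\QQ$-sheaf $\overline{K}$ whose $m$-th multiple equals some $\overline{N} \in \aPic_{C^0}(X)$.

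For (1): since $m\cdot\overline{\OO}(f) = \overline{\OO}(mf)\otimes 1$, the element $\overline{N} := \overline{N}_0 + \overline{\OO}(mf)$ of $\aPic_{C^0}(X)$ satisfies $m\cdot(\overline{L} + \overline{\OO}(f)) = \overline{N}\otimes 1$, hence $\avol(\overline{L} + \overline{\OO}(f)) = m^{-d}\avol(\overline{N}_0 + \overline{\OO}(mf))$. Applying Proposition~\ref{prop:avol:hom:cont}~(1) to $\overline{N}_0$ and the function $mf$ then gives
\[
\left\vert \avol(\overline{L} + \overline{\OO}(f)) - \avol(\overline{L}) \right\vert \leq m^{-d}\, d\, \Vert mf \Vert_{\sup}\, \vol\bigl((N_0)_{\QQ}\bigr).
\]
Now $\Vert mf\Vert_{\sup} = m\Vert f\Vert_{\sup}$, and since the geometric volume is positively homogeneous of degree $d-1$, $\vol((N_0)_{\QQ}) = (m\alpha)^{d-1}\vol(M_{\QQ}) = m^{d-1}\vol(L_{\QQ})$; the factors of $m$ cancel and the right-hand side equals $d\,\Vert f\Vert_{\sup}\,\vol(L_{\QQ})$.

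For (2): for $a \in \QQ_{\geq 0}$ one has $a\cdot\overline{L} = \overline{M}\otimes(a\alpha)$, so straight from the definition $\avol(a\cdot\overline{L}) = (a\alpha)^d\avol(\overline{M}) = a^d\alpha^d\avol(\overline{M}) = a^d\avol(\overline{L})$. For (3): $\nu^*(\overline{L}) = \nu^*(\overline{M})\otimes\alpha$, so $\avol(\nu^*(\overline{L})) = \alpha^d\avol(\nu^*(\overline{M})) = \alpha^d\avol(\overline{M}) = \avol(\overline{L})$ by Proposition~\ref{prop:avol:hom:cont}~(3). There is no genuine obstacle here; the only thing requiring a little care is keeping the two homogeneity degrees straight — $\avol$ has degree $d$ while the geometric volume $\vol$ on the generic fibre has degree $d-1$ — so that the powers of $m$ (and of $\alpha$) cancel exactly in part (1).
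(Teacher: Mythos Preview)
Your argument is correct and is exactly the ``immediate consequence'' the paper has in mind: clear denominators, apply Proposition~\ref{prop:avol:hom:cont}, and use the degree-$d$ homogeneity of $\avol$ together with the degree-$(d-1)$ homogeneity of the geometric volume on the $(d-1)$-dimensional generic fibre $X_{\QQ}$. The paper gives no further details, so there is nothing to compare.
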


Let $\overline{L}$ be a continuous hermitian invertible sheaf on $X$.
We say $\overline{L}$ is {\em effective} if there is a non-zero section
$s \in H^0(X, L)$ with $\Vert s \Vert_{\sup} \leq 1$.
For $\overline{L}_1, \overline{L}_2 \in \aPic_{C^0}(X)$,
if $\overline{L}_1 - \overline{L}_2$ is effective,
then it is denoted by $\overline{L}_1 \geq \overline{L}_2$ or
$\overline{L}_2 \leq \overline{L}_1$.
Moreover, for $\overline{M} \in \aPic_{C^0}(X) \otimes_{\ZZ} \QQ$,
we say $\overline{M}$ is {\em $\QQ$-effective} 
if there are a positive integer $n$ and $\overline{L} \in \aPic_{C^0}(X)$ such that
$\overline{L}$ is effective and
$n\overline{M}  = \overline{L} \otimes 1$ in $\aPic_{C^0}(X) \otimes_{\ZZ} \QQ$.
For $\overline{M}_1, \overline{M}_2 \in \aPic_{C^0}(X) \otimes_{\ZZ} \QQ$,
if $\overline{M}_1 - \overline{M}_2$ is $\QQ$-effective,
then it is denoted by
$\overline{M}_1 \geq_{\QQ} \overline{M}_2$ or
$\overline{M}_2 \leq_{\QQ} \overline{M}_1$.

\begin{Proposition}
\label{prop:ineq:Pic:C:0}
\begin{enumerate}
\renewcommand{\labelenumi}{\rom{(\arabic{enumi})}}
\item
If $\overline{L}_1 \geq \overline{L}_2$
for $\overline{L}_1, \overline{L}_2 \in \aPic_{C^0}(X)$, 
then $\ah(\overline{L}_1) \geq \ah(\overline{L}_2)$ and
$\avol(\overline{L}_1) \geq \avol(\overline{L}_2)$.

\item
If  $\overline{M}_1 \geq_{\QQ} \overline{M}_2$
for  $\overline{M}_1, \overline{M}_2 \in \aPic_{C^0}(X) \otimes_{\ZZ} \QQ$, 
then $\avol(\overline{M}_1) \geq \avol(\overline{M}_2)$.
\end{enumerate}
\end{Proposition}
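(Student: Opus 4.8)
The plan is to reduce both statements to the submultiplicativity $\Vert s \otimes t \Vert_{\sup} \leq \Vert s \Vert_{\sup}\, \Vert t \Vert_{\sup}$ of the sup-norm, together with the fact that $X$ is integral, so that tensoring a global section with a fixed non-zero section is injective.

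For (1), choose a non-zero section $s \in H^0(X, L_1 - L_2)$ with $\Vert s \Vert_{\sup} \leq 1$; this exists because $\overline{L}_1 - \overline{L}_2$ is effective. First I would check that $t \mapsto s \otimes t$ defines an injection from $\{ t \in H^0(X, L_2) \mid \Vert t \Vert_{\sup} \leq 1 \}$ into $\{ u \in H^0(X, L_1) \mid \Vert u \Vert_{\sup} \leq 1 \}$: the image lands in the target by submultiplicativity, and the map is injective since a non-zero section of an invertible sheaf on the integral scheme $X$ is a non-zero-divisor. Hence $\ah(\overline{L}_1) \geq \ah(\overline{L}_2)$. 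For the volume, note that $m(\overline{L}_1 - \overline{L}_2)$ is effective for every $m \geq 1$ (witnessed by $s^{\otimes m}$), so the same argument gives $\ah(m\overline{L}_1) \geq \ah(m\overline{L}_2)$; dividing by $m^d/d!$ and letting $m \to \infty$ gives $\avol(\overline{L}_1) \geq \avol(\overline{L}_2)$.

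For (2), I would clear denominators to reduce to (1). Pick a positive integer $m$ divisible by the denominators of $\overline{M}_1$ and $\overline{M}_2$ and by the integer $n$ occurring in the definition of $\QQ$-effectivity of $\overline{M}_1 - \overline{M}_2$; then $m\overline{M}_1 = \overline{P}_1 \otimes 1$, $m\overline{M}_2 = \overline{P}_2 \otimes 1$ and $m(\overline{M}_1 - \overline{M}_2) = \overline{Q} \otimes 1$ in $\aPic_{C^0}(X) \otimes_{\ZZ} \QQ$ for suitable $\overline{P}_1, \overline{P}_2 \in \aPic_{C^0}(X)$ and an effective $\overline{Q} \in \aPic_{C^0}(X)$ (a multiple of an effective sheaf is effective, again by raising the witnessing section to a power). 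Since $\overline{P}_1 - \overline{P}_2 - \overline{Q}$ maps to $0$ in $\aPic_{C^0}(X) \otimes_{\ZZ} \QQ$, it is a torsion element, so there is $k \geq 1$ with $k\overline{P}_1 = k\overline{P}_2 + k\overline{Q}$. As $k\overline{Q}$ is effective, $k\overline{P}_1 \geq k\overline{P}_2$, so part (1) gives $\avol(k\overline{P}_1) \geq \avol(k\overline{P}_2)$, and Proposition~\ref{prop:avol:hom:cont}~(2) then yields $\avol(\overline{P}_1) \geq \avol(\overline{P}_2)$. Finally $\overline{M}_i = \overline{P}_i \otimes (1/m)$, hence $\avol(\overline{M}_i) = m^{-d} \avol(\overline{P}_i)$ by the definition of $\avol$ on $\aPic_{C^0}(X) \otimes_{\ZZ} \QQ$, and the required inequality $\avol(\overline{M}_1) \geq \avol(\overline{M}_2)$ follows.

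The argument is essentially bookkeeping with denominators; the only step needing genuine care is the passage in (2) from the $\QQ$-effectivity relation to an honest inequality $k\overline{P}_1 \geq k\overline{P}_2$ in $\aPic_{C^0}(X)$, where the torsion subgroup of $\aPic_{C^0}(X)$ is what forces the auxiliary factor $k$. I expect no obstacle beyond this.
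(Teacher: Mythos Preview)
Your proof is correct and follows essentially the same approach as the paper. For (1) the paper simply says ``easily checked'' while you spell out the injection $t \mapsto s \otimes t$ on small sections; for (2) both arguments clear denominators and then kill the remaining torsion ambiguity by a further multiple (your $k$, the paper's $l$) before invoking (1) and homogeneity---the only difference is that you fold the two clearing steps into a single integer $m$ upfront, whereas the paper keeps $n$ and $m$ separate and multiplies at the end.
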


\begin{proof}
(1) is easily checked.
Let us consider (2). Since $\overline{M}_1 \geq_{\QQ} \overline{M}_2$,
there are a positive integer $n$ and $\overline{L} \in \aPic_{C^0}(X)$ such that
$\overline{L}$ is effective and
$n(\overline{M}_1 - \overline{M}_2)  = \overline{L} \otimes 1$.
Moreover, we can find a positive integer $m$ and
$\overline{L}_1, \overline{L}_2 \in \aPic_{C^0}(X)$ such that
$m \overline{M}_1 = \overline{L}_1 \otimes 1$ and
$m \overline{M}_2 = \overline{L}_2 \otimes 1$.
Then
\[
m \overline{L} \otimes 1 = mn(\overline{M}_1 - \overline{M}_2) = n(\overline{L}_1 - \overline{L}_2) \otimes 1.
\]
Thus there is a positive integer $l$ with
$lm\overline{L} = l n(\overline{L}_1 - \overline{L}_2)$, which implies that
$\avol(ln \overline{L}_1) \geq \avol(ln \overline{L}_2)$. Therefore,
\begin{multline*}
\avol(lnm\overline{M}_1) = \avol(ln \overline{L}_1 \otimes 1) = \avol(ln \overline{L}_1) \\
\geq \avol(ln \overline{L}_2) = \avol(ln \overline{L}_2 \otimes 1) =
\avol(lnm\overline{M}_2).
\end{multline*}
Hence, using the homogeneity of $\avol$, we have $\avol(\overline{M}_1) \geq \avol(\overline{M}_2)$.
\end{proof}

\renewcommand{\theequation}{\arabic{section}.\arabic{Theorem}.\arabic{Claim}}

\section{Uniform continuity of the arithmetic volume function}

Let $X$ be a $d$-dimensional projective arithmetic variety.
The purpose of this section is to prove the uniform continuity of
the arithmetic volume function $\avol : \aPic_{C^0}(X) \otimes_{\ZZ} \QQ \to \RR$
in the following sense;
$\avol$ is uniformly continuous on any bounded set in any finite dimensional
vector subspace of $\aPic_{C^0}(X) \otimes_{\ZZ} \QQ$ (cf. Theorem~\ref{thm:cont:C:0}).

Let us begin with the following lemma.

\begin{Lemma}
\label{lem:vol:bound:alg}
Let $V$ be a projective variety over a field and
let $\pmb{L} = (L_1, \ldots, L_r)$ be a finite sequence of $\QQ$-invertible sheaves on $V$.
Then there is a constant $C$ depending only on
$V$ and $\pmb{L}$ such that
$\vol(\pmb{a} \cdot \pmb{L}) \leq C \vert \pmb{a} \vert_{1}^{\dim V}$
for all $\pmb{a}  \in \RR^r$.
\end{Lemma}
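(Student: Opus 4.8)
The plan is to dominate $\pmb a \cdot \pmb L$ by a fixed ample class scaled by $|\pmb a|_1$ and then invoke the homogeneity and monotonicity of the (geometric) volume function. Write $n = \dim V$. Recall that $\vol$, regarded as a function on $\RR$-Cartier divisor classes on $V$, is positively homogeneous of degree $n$ and is monotone for the pseudoeffective order, i.e. $\vol(D) \le \vol(D + E)$ whenever $E$ is pseudoeffective; these are classical properties of the volume function.

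First I would fix an ample $\QQ$-invertible sheaf $H$ on $V$ such that $H + L_i$ and $H - L_i$ are ample for every $i = 1, \dots, r$ — for instance a sufficiently large multiple of any fixed ample class, since only finitely many ampleness conditions are imposed; in particular $H$ depends only on $V$ and $\pmb L$. Given $\pmb a = (a_1, \dots, a_r) \in \RR^r$, choose signs $\epsilon_i \in \{\pm 1\}$ with $\epsilon_i a_i = |a_i|$; then
\[
|\pmb a|_1\, H - \pmb a \cdot \pmb L \;=\; \sum_{i=1}^r |a_i|\,\bigl(H - \epsilon_i L_i\bigr),
\]
and the right-hand side is a nonnegative real combination of ample classes, hence nef, hence pseudoeffective. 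Thus $\pmb a \cdot \pmb L$ is dominated by $|\pmb a|_1 H$ in the pseudoeffective order, and applying monotonicity and then homogeneity of $\vol$ gives
\[
\vol(\pmb a \cdot \pmb L) \;\le\; \vol\bigl(|\pmb a|_1\, H\bigr) \;=\; |\pmb a|_1^{\,n}\, \vol(H),
\]
so the lemma holds with $C = \vol(H)$, a constant depending only on $V$ and $\pmb L$ (the case $\pmb a = 0$ being trivial).

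I do not anticipate a serious obstacle: the only substantive inputs are the degree-$n$ homogeneity of $\vol$ and its monotonicity under adding pseudoeffective classes, both standard. If one prefers to avoid the volume theory of $\RR$-divisors entirely, one can instead establish the bound first for $\pmb a \in \ZZ^r$ with the $L_i$ genuine line bundles, where $|\pmb a|_1 H - \pmb a \cdot \pmb L$ is an honest effective divisor and the inequality $h^0(V, n(\pmb a \cdot \pmb L)) \le h^0(V, n|\pmb a|_1 H)$ is immediate; the case $\pmb a \in \QQ^r$ then follows by clearing denominators and using homogeneity, and the passage to $\pmb a \in \RR^r$ follows from the continuity of $\vol$ on $N^1(V)_{\RR}$ together with the density of $\QQ^r$. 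The one point to keep in view throughout is that the auxiliary sheaf $H$, and hence the constant $C$, is chosen independently of $\pmb a$, which is manifest from the construction.
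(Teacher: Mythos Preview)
Your proof is correct, but it follows a different path from the paper's. The paper simply observes that $f(\pmb a)=\vol(\pmb a\cdot\pmb L)$ is continuous and positively homogeneous of degree $\dim V$ on $\RR^r$ (citing \cite{Laz}), restricts to the compact set $K=\{\vert\pmb x\vert_1=1\}$, takes $C=\sup_{K}f$, and concludes by homogeneity. Your argument instead dominates $\pmb a\cdot\pmb L$ by $\vert\pmb a\vert_1 H$ for a fixed ample $H$ with $H\pm L_i$ ample, and then applies monotonicity and homogeneity of the volume. The paper's route is shorter and uses only continuity plus compactness, but the constant is non-constructive; your route gives the explicit constant $C=\vol(H)$ and makes transparent why the bound holds, at the cost of invoking monotonicity of $\vol$ under adding pseudoeffective classes (which is standard, and which you also sketch how to avoid via the $\ZZ^r\to\QQ^r\to\RR^r$ reduction). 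Both ultimately rest on the same package of basic properties of the geometric volume function.
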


\begin{proof}
We set $f(\pmb{a}) = \vol(\pmb{a} \cdot \pmb{L})$ for $\pmb{a} \in \RR^r$.
It is well known that $f$ is  a continuous and homogeneous
function of degree $\dim V$ on $\RR^{r}$
(cf. \cite{Laz}).
We set $K = \{ \pmb{x} \in \RR^r \mid \vert \pmb{x} \vert_{1} = 1 \}$.
Since $K$ is compact, if we set $C = \sup_{\pmb{x} \in K}
f(\pmb{x})$, then, for $\pmb{y} \in \RR^r \setminus \{ 0 \}$,
$f(\pmb{y}/\vert \pmb{y} \vert_{1}) \leq C$.
Thus, since $f$ is homogeneous of degree $\dim V$,
we have $f(\pmb{y}) \leq C \vert \pmb{y} \vert_{1}^{\dim V}$.
\end{proof}

Next we consider the 
strong estimate of $\avol$ in $\aPic(X) \otimes_{\ZZ} \QQ$.

\begin{Theorem}
\label{thm:C:infty:cont}
Let $\overline{\pmb{L}} = 
(\overline{L}_1, \ldots, \overline{L}_r)$ and
$\overline{\pmb{A}} =
(\overline{A}_1, \ldots, \overline{A}_{r'})$
be finite sequences of $C^{\infty}$-hermitian
$\QQ$-invertible sheaves on $X$.
Then there is a positive constant $C$
depending only on $X$,
$\overline{\pmb{L}}$ and
$\overline{\pmb{A}}$
such that
\[
\vert \avol(\pmb{a} \cdot \overline{\pmb{L}} +
\pmb{\delta} \cdot \overline{\pmb{A}})
- \avol(\pmb{a} \cdot \overline{\pmb{L}}) \vert
\leq C \vert (\pmb{a}, \pmb{\delta}) \vert_{1}^{d-1}\vert \pmb{\delta} \vert_{1}
\]
for all $\pmb{a} \in \QQ^{r}$ and
$\pmb{\delta} \in \QQ^{r'}$.
\end{Theorem}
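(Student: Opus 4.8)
The plan is to reduce to an integral statement on a generically smooth arithmetic variety and then apply the multi-indexed estimation Theorem~\ref{thm:h:0:estimate:big:main} to the family $\overline{\pmb{L}}$ \emph{enlarged by one auxiliary dominating sheaf}. First, since $\avol$ is invariant under birational morphisms (Proposition~\ref{prop:avol:hom:cont:QQ}, (3)) and since $X$ admits a birational projective model which is generically smooth, we may assume $X$ is generically smooth. Writing each $\overline{L}_i$ and $\overline{A}_j$ as $\tfrac1n$ times a $C^{\infty}$-hermitian (genuine) invertible sheaf and using that both sides of the asserted inequality are positively homogeneous of degree $d$ in $(\pmb{a},\pmb{\delta})$ (for the left-hand side by Proposition~\ref{prop:avol:hom:cont:QQ}, (2)), we reduce to the case that $\overline{L}_1,\dots,\overline{L}_r,\overline{A}_1,\dots,\overline{A}_{r'}$ are $C^{\infty}$-hermitian invertible sheaves and $\pmb{a}\in\ZZ^r$, $\pmb{\delta}\in\ZZ^{r'}$; at each of these steps the constant $C$ is merely multiplied by a factor depending only on the data.

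Next I would collapse $\overline{\pmb{A}}$ to a single sheaf. Fix a $C^{\infty}$-hermitian invertible sheaf $\overline{A}$ on $X$ which is effective and for which $\overline{A}-\overline{A}_j$ and $\overline{A}+\overline{A}_j$ are effective for all $j$ (take a large multiple of an ample $C^{\infty}$-hermitian invertible sheaf, with the metric scaled down so that the finitely many relevant sections have sup-norm $\leq 1$). Then $-\vert\pmb{\delta}\vert_1\,\overline{A}\leq\pmb{\delta}\cdot\overline{\pmb{A}}\leq\vert\pmb{\delta}\vert_1\,\overline{A}$ and $-\vert\pmb{\delta}\vert_1\,\overline{A}\leq 0\leq\vert\pmb{\delta}\vert_1\,\overline{A}$ in $\aPic_{C^0}(X)$, so by the monotonicity of $\avol$ (Proposition~\ref{prop:ineq:Pic:C:0}) the two numbers $\avol(\pmb{a}\cdot\overline{\pmb{L}}+\pmb{\delta}\cdot\overline{\pmb{A}})$ and $\avol(\pmb{a}\cdot\overline{\pmb{L}})$ both lie between $\avol(\pmb{a}\cdot\overline{\pmb{L}}-\vert\pmb{\delta}\vert_1\overline{A})$ and $\avol(\pmb{a}\cdot\overline{\pmb{L}}+\vert\pmb{\delta}\vert_1\overline{A})$. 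Since $\vert\pmb{a}\vert_1+\vert\pmb{\delta}\vert_1=\vert(\pmb{a},\pmb{\delta})\vert_1$, it is therefore enough to find a constant $C$ with
\[
\avol(\pmb{a}\cdot\overline{\pmb{L}}+t\overline{A})-\avol(\pmb{a}\cdot\overline{\pmb{L}}-t\overline{A})\leq C\,t\,(\vert\pmb{a}\vert_1+t)^{d-1}
\]
for all $\pmb{a}\in\ZZ^r$ and $t\in\ZZ_{\geq 0}$.

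Now apply Theorem~\ref{thm:h:0:estimate:big:main} to $X$, to the \emph{enlarged} sequence $\overline{\pmb{L}}^{+}:=(\overline{L}_1,\dots,\overline{L}_r,\overline{A})$ of length $r+1$, and to $\overline{A}$: there are constants $a_0,C_0,D_0>0$, depending only on the data above, such that
\[
\ah\left(\pmb{a}^{+}\cdot\overline{\pmb{L}}^{+}+(b-c)\overline{A}\right)\leq\ah\left(\pmb{a}^{+}\cdot\overline{\pmb{L}}^{+}-c\overline{A}\right)+C_0\,b\,\vert\pmb{a}^{+}\vert_1^{\,d-1}+D_0\,\vert\pmb{a}^{+}\vert_1^{\,d-1}\log(\vert\pmb{a}^{+}\vert_1)
\]
for all $\pmb{a}^{+}\in\ZZ^{r+1}$ and $b,c\in\ZZ$ with $\vert\pmb{a}^{+}\vert_1\geq b\geq c\geq 0$ and $\vert\pmb{a}^{+}\vert_1\geq a_0$. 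Adjoining $\overline{A}$ to the index set is precisely what makes $\vert\pmb{a}^{+}\vert_1$ carry a contribution from $t$, so that the constraint $\vert\pmb{a}^{+}\vert_1\geq b$ is satisfiable even when $t$ is large compared with $\vert\pmb{a}\vert_1$. Taking $\pmb{a}^{+}=(ma_1,\dots,ma_r,mt)$ and $b=c=mt$ gives $\ah(m(\pmb{a}\cdot\overline{\pmb{L}}+t\overline{A}))\leq\ah(m(\pmb{a}\cdot\overline{\pmb{L}}))+C_0\,mt\,(m(\vert\pmb{a}\vert_1+t))^{d-1}+D_0(m(\vert\pmb{a}\vert_1+t))^{d-1}\log(m(\vert\pmb{a}\vert_1+t))$, and taking $\pmb{a}^{+}=(ma_1,\dots,ma_r,-mt)$, $c=0$, $b=mt$ gives the analogous bound $\ah(m(\pmb{a}\cdot\overline{\pmb{L}}))\leq\ah(m(\pmb{a}\cdot\overline{\pmb{L}}-t\overline{A}))+C_0\,mt\,(m(\vert\pmb{a}\vert_1+t))^{d-1}+D_0(\dots)$; both are valid for all sufficiently large $m$ (if $\vert\pmb{a}\vert_1+t=0$ everything is trivial). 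Dividing by $m^d/d!$ and letting $m\to\infty$, the logarithmic terms drop out and, by Chen's theorem \cite{Chen}, each side tends to the corresponding value of $\avol$; adding the two limiting inequalities yields the displayed bound with $C=2C_0\,d!$, which completes the proof.

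The main obstacle is this last step. If one applied Theorem~\ref{thm:h:0:estimate:big:main} to $\overline{\pmb{L}}$ alone, the hypothesis $\vert\pmb{a}\vert_1\geq b\geq c\geq 0$ with shift $b-c$ of size $t$ would only be available for $t=\vert\pmb{\delta}\vert_1\lesssim\vert\pmb{a}\vert_1$, and the complementary range $t\gtrsim\vert\pmb{a}\vert_1$ would have to be handled separately — there it follows from monotonicity and the finiteness of $\avol(\overline{A})$, bounding both volumes above by $\vert(\pmb{a},\pmb{\delta})\vert_1^{d}\avol(\overline{A})$ and below by $0$ (as $\ah\geq 0$), and using $\vert(\pmb{a},\pmb{\delta})\vert_1^{d}\leq 2\,t\,\vert(\pmb{a},\pmb{\delta})\vert_1^{d-1}$ in that range. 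Enlarging the index family by $\overline{A}$ as above dispenses with this case distinction and is the technical heart of the argument.
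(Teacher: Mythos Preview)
Your proof is correct and follows essentially the same strategy as the paper: reduce to the generically smooth, integral case, collapse $\overline{\pmb{A}}$ to a single dominating effective sheaf via monotonicity, and then apply Theorem~\ref{thm:h:0:estimate:big:main} to an enlarged index family so that the constraint $\vert\pmb{a}^{+}\vert_1\geq b$ absorbs the $t$-contribution. The only cosmetic difference is that the paper enlarges $\overline{\pmb{L}}$ by the trivial sheaf $0$ (setting $\overline{\pmb{L}}'=(\overline{L}_1,\dots,\overline{L}_r,0)$ and $\pmb{a}'=m(m_0/\vert\delta\vert)(\pmb{a},\delta)$) rather than by $\overline{A}$ itself, but your choice works just as well and arguably makes the substitution a bit cleaner.
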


\begin{proof}
First let us see the following claim:

\begin{Claim}
Let $\overline{A}$ be a $\QQ$-effective $C^{\infty}$-hermitian $\QQ$-invertible sheaf on $X$.
Then there is a positive constant $C_1$
depending only on $X$,
$\overline{\pmb{L}}$ and
$\overline{A}$
such that
\[
\vert \avol(\pmb{a} \cdot \overline{\pmb{L}} +
\delta \cdot \overline{A})
- \avol(\pmb{a} \cdot \overline{\pmb{L}}) \vert
\leq C_1 \vert (\pmb{a}, \delta) \vert_{1}^{d-1} \vert \delta \vert
\]
for all $\pmb{a} \in \QQ^{r}$ and
$\delta \in \QQ$. 
\end{Claim}

\begin{proof}
Let $\nu : X' \to X$ be a generic resolution of singularities of $X$.
Then, by \cite[Theorem~4.3]{MoCont} or Proposition~\ref{prop:avol:hom:cont:QQ},
\[
\avol(\pmb{a} \cdot \overline{\pmb{L}}) =
\avol(\pmb{a} \cdot \nu^*(\overline{\pmb{L}})),\quad
\avol(\pmb{a} \cdot \overline{\pmb{L}} +
\delta \cdot \overline{A}) =
\avol(\pmb{a} \cdot \nu^*(\overline{\pmb{L}}) +
\delta \cdot \nu^*(\overline{A})).
\]
Thus we may assume that $X$ is generically smooth.
Moreover, since
\[
\vert \avol(\pmb{a} \cdot n \overline{\pmb{L}} +
\delta n \overline{A})
- \avol(\pmb{a} \cdot n \overline{\pmb{L}}) \vert
=
n^d\vert \avol(\pmb{a} \cdot \overline{\pmb{L}} +
\delta\overline{A})
- \avol(\pmb{a} \cdot \overline{\pmb{L}}) \vert
\]
for a positive integer $n$,
we may assume that
$\overline{L}_1, \ldots, \overline{L}_r,
\overline{A}
\in \aPic(X)$ and $\overline{A}$ is effective.

We set $\overline{\pmb{L}}' = (\overline{L}_1, \ldots, \overline{L}_r, 0)$.
By Theorem~\ref{thm:h:0:estimate:big:main},
there are a positive constants
$a'_0$, $C_1'$ and $D'_1$ depending only on
$X$, $\overline{L}_1, \ldots, \overline{L}_r$ and
$\overline{A}$ such that
\[
\ah\left(\pmb{a}' \cdot \overline{\pmb{L}}' + (b-c) \overline{A} \right) \leq
\ah\left(\pmb{a}' \cdot \overline{\pmb{L}}' - c \overline{A}\right)
+ C'_1 b \vert \pmb{a}' \vert_{1}^{d-1} + D'_1 \vert \pmb{a}' \vert_{1}^{d-1} \log(\vert \pmb{a}' \vert_{1})
\]
for all $\pmb{a}' \in \ZZ^{r+1}$ and $b, c \in \ZZ$
with $\vert \pmb{a}' \vert_{1} \geq b \geq c \geq 0$ and
$\vert \pmb{a}' \vert_{1} \geq a_0$.

If $\delta = 0$, then the assertion of the claim is obvious,
so that we assume that $\delta \not= 0$.
Let $m_0$ be a positive integer such that
$m_0/\vert\delta\vert \in \ZZ$ and $(m_0/\vert\delta\vert) \pmb{a} \in \ZZ^r$.

Applying the above estimate to
the case where $\pmb{a}' = m(m_0/\vert\delta\vert)(\pmb{a},\delta)$, $b = mm_0$ and
$c=0$ ($m \gg 0$),
we have
\[
\avol((m_0/\vert\delta\vert) \pmb{a} \cdot \overline{\pmb{L}} + m_0 \overline{A})
\leq \avol((m_0/\vert\delta\vert) \pmb{a} \cdot\overline{\pmb{L}}) + d! C'_1m_0^d
(1/\vert \delta \vert)^{d-1}\vert (\pmb{a},\delta) \vert_{1}^{d-1}
\]
becuase $ m(m_0/\vert\delta\vert)(\pmb{a},\delta) \cdot \overline{\pmb{L}}' =  m(m_0/\vert\delta\vert)\pmb{a} \cdot \overline{\pmb{L}}$. Thus, using the homogeneity of $\avol$, we obtain
\[
0 \leq 
\avol(\pmb{a} \cdot \overline{\pmb{L}} + \vert\delta\vert \overline{A})
- \avol(\pmb{a} \cdot\overline{\pmb{L}}) \leq d! C'_1
\vert \delta \vert \vert (\pmb{a},\delta) \vert_{1}^{d-1}.
\]
Next, applying the above estimate to
the case where $\pmb{a}' = m(m_0/\vert\delta\vert)(\pmb{a},\delta)$, $b = mm_0$ and
$c=mm_0$ ($m \gg 0$),
we have
\[
0 \leq \avol(\pmb{a} \cdot \overline{\pmb{L}}) - \avol(\pmb{a} \cdot\overline{\pmb{L}}-\vert\delta\vert \overline{A})
\leq  d! C'_1
\vert\delta\vert \vert (\pmb{a},\delta) \vert_{1}^{d-1}.
\]
Thus the claim follows.
\end{proof}

Next we consider a general case.
We can find $C^{\infty}$-hermitian $\QQ$-invertible sheaves
$\overline{A}'_1, \overline{A}''_1,
\ldots, \overline{A}'_{r'}, \overline{A}''_{r'}$ such that
$\overline{A}_i = \overline{A}'_i - \overline{A}''_i$,
$\overline{A}'_i\geq_{\QQ}0$ and $\overline{A}''_i \geq_{\QQ} 0$ for all $i=1,\ldots,r'$.
Then, since $\vert (\pmb{a}, \pmb{\delta}, -\pmb{\delta}) \vert_1 \leq 2 \vert (\pmb{a}, \pmb{\delta}) \vert_1$,
$\vert(\pmb{\delta}, -\pmb{\delta})\vert_{1} = 2\vert \pmb{\delta} \vert_{1}$ and
\[
\pmb{a} \cdot \overline{\pmb{L}} + \pmb{\delta} \cdot \overline{\pmb{A}}
= \pmb{a} \cdot \overline{\pmb{L}} + \pmb{\delta} \cdot \overline{\pmb{A}}'
+  (-\pmb{\delta}) \cdot \overline{\pmb{A}}'',
\]
we may assume that $\overline{A}_i \geq_{\QQ} 0$ for all $i$.
We set $\overline{B} = \overline{A}_1 + \cdots + \overline{A}_{r'}$.
Then we have $-\vert \pmb{\delta} \vert_{1} \overline{B}
\leq \pmb{\delta} \cdot \overline{\pmb{A}} \leq \vert \pmb{\delta} \vert_{1}
\overline{B}$, which implies that
\[
\avol(\pmb{a} \cdot \overline{\pmb{L}} -\vert \pmb{\delta} \vert_{1} \overline{B} )\leq
\avol(\pmb{a} \cdot \overline{\pmb{L}} + \pmb{\delta} \cdot \overline{\pmb{A}})
\leq
\avol(\pmb{a} \cdot \overline{\pmb{L}} + \vert \pmb{\delta} \vert_{1} \overline{B}).
\] 
Thus the theorem follows from the previous claim.
\end{proof}

\begin{Corollary}
\label{cor:C:infty:cont}
Let $\overline{\pmb{L}} = 
(\overline{L}_1, \ldots, \overline{L}_r)$ and
$\overline{\pmb{A}} =
(\overline{A}_1, \ldots, \overline{A}_{r'})$
be finite sequences of $C^{\infty}$-hermitian
$\QQ$-invertible sheaves on $X$.
Then there are positive constants $C$ and $C'$
depending only on $X$,
$\overline{\pmb{L}}$ and
$\overline{\pmb{A}}$
such that
\[
\vert \avol(\pmb{a} \cdot \overline{\pmb{L}} +
\pmb{\delta} \cdot \overline{\pmb{A}} + \overline{\OO}(g))
- \avol(\pmb{a} \cdot \overline{\pmb{L}}) \vert
\leq C \vert (\pmb{a}, \pmb{\delta}) \vert_{1}^{d-1}\vert \pmb{\delta} \vert_{1} + 
C'  \vert (\pmb{a}, \pmb{\delta}) \vert_{1}^{d-1}
\Vert g \Vert_{\sup}
\]
for all $\pmb{a} \in \QQ^{r}$,
$\pmb{\delta} \in \QQ^{r'}$ and $g \in C^0(X)$.
\end{Corollary}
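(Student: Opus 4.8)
The plan is to reduce this to Theorem~\ref{thm:C:infty:cont} combined with the elementary continuity estimate Proposition~\ref{prop:avol:hom:cont:QQ}(1). Write $\overline{M} = \pmb{a} \cdot \overline{\pmb{L}} + \pmb{\delta} \cdot \overline{\pmb{A}} \in \aPic_{C^0}(X) \otimes_{\ZZ} \QQ$. By the triangle inequality,
\[
\left\vert \avol(\overline{M} + \overline{\OO}(g)) - \avol(\pmb{a} \cdot \overline{\pmb{L}}) \right\vert
\leq
\left\vert \avol(\overline{M} + \overline{\OO}(g)) - \avol(\overline{M}) \right\vert
+
\left\vert \avol(\overline{M}) - \avol(\pmb{a} \cdot \overline{\pmb{L}}) \right\vert .
\]
The second term is bounded by $C \vert (\pmb{a}, \pmb{\delta}) \vert_{1}^{d-1} \vert \pmb{\delta} \vert_{1}$ directly by Theorem~\ref{thm:C:infty:cont}, which provides the constant $C$.

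For the first term, I would apply Proposition~\ref{prop:avol:hom:cont:QQ}(1) with $\overline{L} = \overline{M}$ and $f = g$, obtaining
\[
\left\vert \avol(\overline{M} + \overline{\OO}(g)) - \avol(\overline{M}) \right\vert \leq d \, \Vert g \Vert_{\sup} \, \vol(M_{\QQ}),
\]
where $M_{\QQ}$ is the $\QQ$-invertible sheaf $\pmb{a} \cdot \pmb{L} + \pmb{\delta} \cdot \pmb{A}$ on the generic fiber $X_{\QQ}$, a projective variety of dimension $d-1$. Now I would invoke Lemma~\ref{lem:vol:bound:alg} for $V = X_{\QQ}$ and the sequence $(L_1, \ldots, L_r, A_1, \ldots, A_{r'})$ of $\QQ$-invertible sheaves: this yields a constant $C''$ depending only on $X$, $\overline{\pmb{L}}$ and $\overline{\pmb{A}}$ with $\vol(M_{\QQ}) \leq C'' \vert (\pmb{a}, \pmb{\delta}) \vert_{1}^{d-1}$. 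Hence the first term is at most $d C'' \, \Vert g \Vert_{\sup} \, \vert (\pmb{a}, \pmb{\delta}) \vert_{1}^{d-1}$, and setting $C' = d C''$ completes the argument.

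There is no real obstacle here; the points requiring a little care are bookkeeping. First, the $\vol$ appearing in Proposition~\ref{prop:avol:hom:cont:QQ}(1) is the geometric volume of the underlying $\QQ$-invertible sheaf on the $(d-1)$-dimensional variety $X_{\QQ}$, so Lemma~\ref{lem:vol:bound:alg} applies with $\dim V = d-1$ and produces exactly the exponent $d-1$ on $\vert (\pmb{a}, \pmb{\delta}) \vert_{1}$. Second, $\overline{\pmb{L}}$ and $\overline{\pmb{A}}$ are only $\QQ$-invertible sheaves, but this causes no difficulty since both Lemma~\ref{lem:vol:bound:alg} and Proposition~\ref{prop:avol:hom:cont:QQ} are already formulated for $\QQ$-invertible sheaves.
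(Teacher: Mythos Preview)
Your proposal is correct and follows essentially the same approach as the paper: split via the triangle inequality, bound the $g$-term by Proposition~\ref{prop:avol:hom:cont:QQ}(1) together with Lemma~\ref{lem:vol:bound:alg} (giving $C' = dC''$), and bound the $\pmb{\delta}$-term by Theorem~\ref{thm:C:infty:cont}. The paper's proof is terser but identical in substance.
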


\begin{proof}
By (1) of Proposition~\ref{prop:avol:hom:cont:QQ} and Lemma~\ref{lem:vol:bound:alg}, 
there is a positive constant $C'$ 
depending only on $d$, $\pmb{L}_{\QQ}$ and $\pmb{A}_{\QQ}$ such that
\[
\vert \avol(\pmb{a} \cdot \overline{\pmb{L}} +
\pmb{\delta} \cdot \overline{\pmb{A}} + \overline{\OO}(g))
- \avol(\pmb{a} \cdot \overline{\pmb{L}} + \pmb{\delta} \cdot \overline{\pmb{A}} ) \vert
\leq C' \Vert g \Vert_{\sup} \vert (\pmb{a}, \pmb{\delta} ) \vert^{d-1}_1 
\]
for all $\pmb{a} \in \QQ^{r}$,
$\pmb{\delta} \in \QQ^{r'}$ and $g \in C^0(X)$.
Therefore, the corollary follows from Theorem~\ref{thm:C:infty:cont}.
\end{proof}

\begin{Theorem}
\label{thm:cont:C:0}
Let $\overline{\pmb{L}} = (\overline{L}_1, \ldots, \overline{L}_r)$
and $\overline{\pmb{A}} = (\overline{A}_1, \ldots, \overline{A}_{r'})$
be finite sequences of continuous hermitian $\QQ$-invertible sheaves on $X$.
Let $B$ be a bounded set in $\QQ^r$. Then,
for any positive real number $\epsilon$,
there are positive real numbers $\delta$ and $\delta'$ such that
\[
\left\vert \avol(\pmb{a} \cdot \overline{\pmb{L}} + \pmb{\delta} \cdot
\overline{\pmb{A}} + \overline{\OO}(g)) - \avol(\pmb{a} \cdot \overline{\pmb{L}})\right\vert
\leq \epsilon
\]
for all $\pmb{a} \in B$, $\pmb{\delta} \in \QQ^{r'}$ and 
$g \in C^0(X)$ with $\vert \pmb{\delta}\vert_1 \leq \delta$ and $\Vert g \Vert_{\sup} \leq \delta'$.
In particular, if we set $f(\pmb{x}) = \avol(\pmb{x} \cdot \overline{\pmb{L}})$ for
$ \pmb{x} \in \QQ^r$, then
$f$ is uniformly continuous on $B$.
\end{Theorem}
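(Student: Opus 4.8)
The plan is to reduce the statement to the $C^\infty$-case already established in Theorem~\ref{thm:C:infty:cont} and Corollary~\ref{cor:C:infty:cont}, by approximating the continuous metrics of the $\overline{L}_i$ and $\overline{A}_j$ by $C^\infty$-metrics via Lemma~\ref{lem:Stone:Weierstrass}. First I would fix a small parameter $\eta>0$ (to be chosen at the very end) and, applying Lemma~\ref{lem:Stone:Weierstrass} entrywise, write $\overline{L}_i=\overline{L}_i'+\overline{\OO}(\phi_i)$ and $\overline{A}_j=\overline{A}_j'+\overline{\OO}(\psi_j)$ in $\aPic_{C^0}(X)\otimes_{\ZZ}\QQ$, where $\overline{\pmb{L}}'=(\overline{L}_1',\ldots,\overline{L}_r')$ and $\overline{\pmb{A}}'=(\overline{A}_1',\ldots,\overline{A}_{r'}')$ consist of $C^\infty$-hermitian $\QQ$-invertible sheaves and $\Vert\phi_i\Vert_{\sup},\Vert\psi_j\Vert_{\sup}\le\eta$. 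The point to keep in mind throughout is that $\overline{\OO}(\cdot)$ is trivial on underlying sheaves, so the underlying $\QQ$-invertible sheaves of $\overline{\pmb{L}}'$ and $\overline{\pmb{A}}'$ coincide with those of $\overline{\pmb{L}}$ and $\overline{\pmb{A}}$.

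Next I would insert this $C^\infty$-model by the triangle inequality, bounding
\[
\left\vert \avol(\pmb{a}\cdot\overline{\pmb{L}}+\pmb{\delta}\cdot\overline{\pmb{A}}+\overline{\OO}(g))-\avol(\pmb{a}\cdot\overline{\pmb{L}})\right\vert
\]
by four terms: (i) removing $\overline{\OO}(g)$; (ii) replacing the continuous metrics of $\pmb{a}\cdot\overline{\pmb{L}}+\pmb{\delta}\cdot\overline{\pmb{A}}$ by the $C^\infty$ ones, an error equal to $\overline{\OO}$ of a linear combination of the $\phi_i,\psi_j$ with coefficients $\pmb{a},\pmb{\delta}$, hence of sup-norm $\le(\vert\pmb{a}\vert_1+\vert\pmb{\delta}\vert_1)\eta$; (iii) the purely $C^\infty$ difference $\avol(\pmb{a}\cdot\overline{\pmb{L}}'+\pmb{\delta}\cdot\overline{\pmb{A}}')-\avol(\pmb{a}\cdot\overline{\pmb{L}}')$; (iv) replacing back the metrics of $\pmb{a}\cdot\overline{\pmb{L}}'$ by those of $\pmb{a}\cdot\overline{\pmb{L}}$, an error which is $\overline{\OO}$ of a combination of the $\phi_i$ with sup-norm $\le\vert\pmb{a}\vert_1\eta$. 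Terms (i), (ii) and (iv) I would estimate by (1) of Proposition~\ref{prop:avol:hom:cont:QQ} together with Lemma~\ref{lem:vol:bound:alg} applied to $X_{\QQ}$ and $\pmb{L}_{\QQ},\pmb{A}_{\QQ}$, obtaining bounds of the form $(\text{const})\cdot\vert(\pmb{a},\pmb{\delta})\vert_1^{d-1}\Vert g\Vert_{\sup}$ and $(\text{const})\cdot\vert(\pmb{a},\pmb{\delta})\vert_1^{d-1}(\vert\pmb{a}\vert_1+\vert\pmb{\delta}\vert_1)\eta$ in which the constant depends only on $X,\pmb{L}_{\QQ},\pmb{A}_{\QQ}$ and is therefore independent of $\eta$; term (iii) I would estimate by Theorem~\ref{thm:C:infty:cont} (equivalently Corollary~\ref{cor:C:infty:cont} with $g=0$) by $C_1\vert(\pmb{a},\pmb{\delta})\vert_1^{d-1}\vert\pmb{\delta}\vert_1$, where $C_1$ depends on $\overline{\pmb{L}}',\overline{\pmb{A}}'$, hence on $\eta$, but, \emph{crucially}, carries no factor $\eta$.

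Finally, restricting to $\vert\pmb{\delta}\vert_1\le1$ so that $\vert(\pmb{a},\pmb{\delta})\vert_1$ stays below a bound $M$ depending only on $B$, the four estimates sum to something of the shape $C'M^{d-1}\Vert g\Vert_{\sup}+C_1M^{d-1}\vert\pmb{\delta}\vert_1+C''\eta$ with $C',C''$ independent of $\eta$, and I would then fix the constants in the order: $\eta$ first (so that $C''\eta\le\epsilon/3$), which pins down $\overline{\pmb{L}}',\overline{\pmb{A}}'$ and hence $C_1$; then $\delta\in(0,1]$ with $C_1M^{d-1}\delta\le\epsilon/3$; then $\delta'$ with $C'M^{d-1}\delta'\le\epsilon/3$. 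This yields the main assertion, and the ``in particular'' statement follows by applying it with $\overline{\pmb{A}}=\overline{\pmb{L}}$, $g=0$, and $\pmb{\delta}=\pmb{a}'-\pmb{a}$ for $\pmb{a},\pmb{a}'\in B$. The one real obstacle is the potential circularity just avoided: one must resist using the full estimate of Corollary~\ref{cor:C:infty:cont} to absorb the metric-approximation errors, since its constant $C'$ would then multiply $\eta$ and the choice of $\eta$ would become circular; instead those errors have to be controlled by the metric-independent bound coming from Proposition~\ref{prop:avol:hom:cont:QQ}(1) and Lemma~\ref{lem:vol:bound:alg}.
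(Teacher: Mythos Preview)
Your proposal is correct and follows essentially the same approach as the paper: approximate the continuous metrics by $C^{\infty}$ ones via Lemma~\ref{lem:Stone:Weierstrass}, control the resulting metric-approximation errors using Proposition~\ref{prop:avol:hom:cont:QQ}(1) together with the metric-independent volume bound of Lemma~\ref{lem:vol:bound:alg}, and handle the remaining purely $C^{\infty}$ term by Theorem~\ref{thm:C:infty:cont}/Corollary~\ref{cor:C:infty:cont}. The only cosmetic differences are that the paper uses a three-term triangle inequality (keeping $\overline{\OO}(g)$ inside and invoking Corollary~\ref{cor:C:infty:cont} to absorb both $\pmb{\delta}$ and $g$ at once) rather than your four-term split, and it fixes the approximation parameter explicitly as $\eta=\epsilon/(3C_1 d(M+1)^d)$ at the outset rather than leaving it to be chosen at the end.
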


\begin{proof}
By Lemma~\ref{lem:vol:bound:alg}, there is a constant $C_1$ such that
\[
\begin{cases}
\vol((\pmb{a}\cdot \pmb{L})_{\QQ}) \leq C_1 \vert \pmb{a} \vert_{1}^{d-1}, \\
\vol((\pmb{a} \cdot \pmb{L} + \pmb{\delta}\cdot \pmb{A})_{\QQ})
\leq C_1 \vert (\pmb{a}, \pmb{\delta}) \vert_{1}^{d-1}
\end{cases}
\]
for all $\pmb{a} \in \QQ^r$ and $\pmb{\delta} \in \QQ^{r'}$.
We set $M = \sup \{ \vert \pmb{a} \vert_1 \mid \pmb{a} \in B \}$.
By Lemma~\ref{lem:Stone:Weierstrass},
we can find $\phi_1, \ldots, \phi_r, \psi_1, \ldots, \psi_{r'}  \in C^0(X)$ such that
\[
\overline{\pmb{L}}^{\pmb{\phi}} =
(\overline{L}_1+ \overline{\OO}(\phi_1), \ldots, \overline{L}_r+ \overline{\OO}(\phi_r))
\quad\text{and}\quad
\overline{\pmb{A}}^{\pmb{\psi}} = (\overline{A}_1+ \overline{\OO}(\psi_1), \ldots,\overline{A}_{r'}+ \overline{\OO}(\psi_{r'}))
\]
are $C^{\infty}$ and that
\[
\Vert \phi_i \Vert_{\sup} \leq \frac{\epsilon}{3C_1d(M+1)^d}
\quad\text{and}\quad
\Vert \psi_j \Vert_{\sup} \leq \frac{\epsilon}{3C_1d(M + 1)^d}
\]
for all $i$ and $j$.
Since
\[
\pmb{a} \cdot \overline{\pmb{L}}^{\pmb{\phi}} =
\pmb{a}  \cdot \overline{\pmb{L}} + \overline{\OO}(\pmb{a} \cdot \pmb{\phi})
\quad\text{and}\quad
\pmb{a} \cdot \overline{\pmb{L}}^{\pmb{\phi}} + 
\pmb{\delta} \cdot \overline{\pmb{A}}^{\pmb{\psi}} =
\pmb{a} \cdot \overline{\pmb{L}} + \pmb{\delta} \cdot
\overline{\pmb{A}} + \overline{\OO}(\pmb{a} \cdot \pmb{\phi} + \pmb{\delta} \cdot \pmb{\psi}),
\]
by (1) of Proposition~\ref{prop:avol:hom:cont:QQ}, we have
\[
\begin{cases}
\vert \avol(\pmb{a} \cdot \overline{\pmb{L}}^{\pmb{\phi}}) -
\avol(\pmb{a} \cdot \overline{\pmb{L}}) \vert \leq d C_1 \Vert \pmb{a} \cdot \pmb{\phi}  \Vert_{\sup} \vert \pmb{a} \vert^{d-1}_1 \\
\vert \avol(\pmb{a} \cdot \overline{\pmb{L}}^{\pmb{\phi}} + \pmb{\delta} \cdot \overline{\pmb{A}}^{\pmb{\psi}} + \overline{\OO}(g)) -
\avol(\pmb{a} \cdot \overline{\pmb{L}} + \pmb{\delta} \cdot \overline{\pmb{A}} +\overline{\OO}(g)) \vert \\\hspace{10em} 
\leq d C_1 \Vert  \pmb{a} \cdot \pmb{\phi} +  \pmb{\delta} \cdot \pmb{\psi}\Vert_{\sup} \vert (\pmb{a}, \pmb{\delta})\vert^{d-1}_1
\end{cases}
\]
Note that
\[
dC_1\Vert  \pmb{a} \cdot \pmb{\phi} \Vert_{\sup} \vert \pmb{a} \vert^{d-1}_1
 \leq  dC_1\vert \pmb{a} \vert^d_{1} \frac{\epsilon}{3C_1d(M+ 1)^d} \leq
 \epsilon/3
\]
and
\[
dC_1\Vert  \pmb{a} \cdot \pmb{\phi} + \pmb{\delta} \cdot \pmb{\psi}\Vert_{\sup} \vert (\pmb{a}, \pmb{\delta})\vert^{d-1}_1
 \leq dC_1 \vert (\pmb{a}, \pmb{\delta}) \vert_{1}^d \frac{\epsilon}{3C_1d(M + 1)^{d}} \leq
  \epsilon/3
\]
for all $\pmb{a} \in B$ and
$\pmb{\delta} \in \QQ^{r'}$ with $\vert \pmb{\delta} \vert_1 \leq 1$.
Thus we get 
\[
\begin{cases}
\vert \avol(\pmb{a} \cdot \overline{\pmb{L}}^{\pmb{\phi}}) -
\avol(\pmb{a} \cdot \overline{\pmb{L}}) \vert \leq \epsilon/3, \\
\vert \avol(\pmb{a} \cdot \overline{\pmb{L}}^{\pmb{\phi}} + \pmb{\delta} \cdot \overline{\pmb{A}}^{\pmb{\psi}}+\overline{\OO}(g) ) -
\avol(\pmb{a} \cdot \overline{\pmb{L}} + \pmb{\delta} \cdot \overline{\pmb{A}} + \overline{\OO}(g)) \vert \leq \epsilon/3
\end{cases}
\]
for all $\pmb{a} \in B$, $g \in C^0(X)$ and $\pmb{\delta} \in \QQ^{r'}$ with $\vert \pmb{\delta} \vert_1 \leq 1$.

On the other hand, by Corollary~\ref{cor:C:infty:cont}, we can find positive real constants
$\delta$ and $\delta'$ depending only on $B$, $\overline{\pmb{L}}^{\pmb{\phi}}$,
$\overline{\pmb{A}}^{\pmb{\psi}}$ and $X$ such that
\[
\vert \avol(\pmb{a} \cdot \overline{\pmb{L}}^{\pmb{\phi}} + \pmb{\delta} \cdot \overline{\pmb{A}}^{\pmb{\psi}} + \overline{\OO}(g)) - \avol(\pmb{a} \cdot \overline{\pmb{L}}^{\pmb{\phi}}) \vert
\leq \epsilon/3 
\]
for $\pmb{a} \in B$, $\vert \pmb{\delta} \vert_1 \leq \delta$ and
$\Vert g \Vert_{\sup} \leq \delta'$.
Therefore,  our theorem follows because
\[
\begin{split}
& \vert \avol(\pmb{a} \cdot \overline{\pmb{L}} + \pmb{\delta} \cdot \overline{\pmb{A}} + \overline{\OO}(g)) - \avol(\pmb{a} \cdot \overline{\pmb{L}}) \vert \\
&\hspace{8em}\leq 
\vert \avol(\pmb{a} \cdot \overline{\pmb{L}}^{\pmb{\phi}} + \pmb{\delta} \cdot \overline{\pmb{A}}^{\pmb{\psi}}+ \overline{\OO}(g)) - \avol(\pmb{a} \cdot \overline{\pmb{L}} + \pmb{\delta} \cdot \overline{\pmb{A}} + \overline{\OO}(g)) \vert \\
&\hspace{14em}+ 
\vert \avol(\pmb{a} \cdot \overline{\pmb{L}}^{\pmb{\phi}} + \pmb{\delta} \cdot \overline{\pmb{A}}^{\pmb{\psi}}+ \overline{\OO}(g)) - \avol(\pmb{a} \cdot \overline{\pmb{L}}^{\pmb{\phi}}) \vert \\
&\hspace{18em}+  
\vert \avol(\pmb{a} \cdot \overline{\pmb{L}}^{\pmb{\phi}}) - \avol(\pmb{a} \cdot \overline{\pmb{L}})\vert.
\end{split}
\]
\end{proof}

\section{Continuous extension of the arithmetic volume function over $\RR$}
Let $V$ be a vector space over $\QQ$ and $f : V \to \RR$ a weakly continuous function
in the sense of Conventions and terminology~\ref{CT:weak:cont}.
We say $f$ has a {\em weakly continuous extension over $\RR$} if
there is a weakly continuous function $\tilde{f} : V \otimes_{\QQ} \RR \to \RR$ with
$\rest{\tilde{f}}{V} = f$.
Note that if there is a weakly continuous extension $\tilde{f}$ of
$f $, then $\tilde{f}$ is uniquely determined.
The following example shows that
a weakly continuous function over $\QQ$ does not necessarily have a weakly continuous extension
over $\RR$.

\begin{Example}
\label{example:cont:QQ:not:cont:RR}
Let $a$ be a positive irrational number, and let $f : \QQ^2 \to \RR$ be a function
given by
\[
f(x, y) = \begin{cases}
\max \{ \vert x \vert, a \vert y \vert \} & \text{if $x + a y > 0$}, \\
0 & \text{if $x + a y \leq 0$}.
\end{cases}
\]
Then it is easy to see the following:
\begin{enumerate}
\renewcommand{\labelenumi}{(\arabic{enumi})}
\item
$f$ is positively homogeneous of degree $1$.

\item
$f$ is continuous on $\QQ^2$.

\item
$f$ is monotonically increasing, that is,
$f(x, y) \leq f(x', y')$ for all $(x, y), (x', y') \in \QQ^2$ with $x \leq x'$ and $y \leq y'$.

\item
$f$ has
no continuous extension over $\RR$.

\item
$f$ is not uniformly continuous on 
$\{ (x, y) \in \QQ^2 \mid \max \{ \vert x \vert, a \vert y \vert \} \leq 1 \}$.
\end{enumerate}
\end{Example}

The following lemma gives a condition to guarantee a weakly continuous extension over $\RR$.

\begin{Lemma}
\label{lem:cont:extension}
Let $f : V \to \RR$ be a weakly continuous function on
a vector space $V$ over $\QQ$.
Then the following are equivalent:
\begin{enumerate}
\renewcommand{\labelenumi}{\rom{(\arabic{enumi})}}
\item
$f$ has a weakly continuous extension over $\RR$.

\item
$f$ is uniformly continuous
on any bounded set $B$ in any finite dimensional vector subspace of $V$.
\end{enumerate}
Moreover, if $f$ is positively homogenous, then the weakly continuous extension of $f$
is also positively homogeneous.
\end{Lemma}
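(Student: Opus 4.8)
The plan is to prove the equivalence in two steps: the implication (1) $\Rightarrow$ (2) by a short compactness argument, and (2) $\Rightarrow$ (1) by an explicit construction of the extension $\tilde{f}$ through approximation along rational points. For (1) $\Rightarrow$ (2), let $\tilde{f} : V \otimes_{\QQ} \RR \to \RR$ be a weakly continuous extension, let $W$ be a finite-dimensional $\QQ$-vector subspace of $V$, and let $B \subseteq W$ be bounded. Then $W \otimes_{\QQ} \RR$ is a finite-dimensional $\RR$-vector subspace of $V \otimes_{\QQ} \RR$, so $\rest{\tilde{f}}{W \otimes_{\QQ} \RR}$ is continuous in the usual topology; the closure $\overline{B}$ of $B$ inside $W \otimes_{\QQ} \RR$ is closed and bounded, hence compact, so $\rest{\tilde{f}}{\overline{B}}$ is uniformly continuous. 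Restricting back to $B$ and using $\rest{\tilde{f}}{V} = f$ gives the uniform continuity of $f$ on $B$.

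For (2) $\Rightarrow$ (1), I would, for each finite-dimensional $\QQ$-vector subspace $W$ of $V$, identify $W \otimes_{\QQ} \RR$ with $\RR^{\dim_{\QQ} W}$ so that $W$ becomes the dense subset $\QQ^{\dim_{\QQ} W}$. Given $x \in V \otimes_{\QQ} \RR$, choose such a $W$ with $x \in W \otimes_{\QQ} \RR$ and a sequence $\{x_k\}$ in $W$ converging to $x$; this sequence lies in a bounded subset of $W$ and is Cauchy, so by (2) the sequence $\{f(x_k)\}$ is Cauchy in $\RR$, and I set $\tilde{f}(x) := \lim_k f(x_k)$. The routine verifications are: (a) the limit is independent of the sequence, since two sequences in one finite-dimensional $\QQ$-subspace converging to $x$ have difference tending to $0$ inside a bounded set, so uniform continuity forces the corresponding $f$-values to have a common limit; (b) independence of $W$, by passing to the sum of two choices, which is again finite-dimensional, and applying (a); (c) $\rest{\tilde{f}}{V} = f$, via constant sequences. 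Here one uses only that $\otimes_{\QQ} \RR$ preserves inclusions of $\QQ$-subspaces and that $\QQ^n$ is dense in $\RR^n$.

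It remains to check that $\tilde{f}$ is weakly continuous, which I expect to be the main obstacle. First, any finite-dimensional $\RR$-vector subspace $U$ of $V \otimes_{\QQ} \RR$ is contained in some $W \otimes_{\QQ} \RR$ with $W$ a finite-dimensional $\QQ$-subspace of $V$ (take $W$ generated by the finitely many elements of $V$ appearing in expressions for a basis of $U$), so it suffices to prove that $\rest{\tilde{f}}{W \otimes_{\QQ} \RR}$ is continuous for each such $W$. Fixing a norm on $W \otimes_{\QQ} \RR$ and a point $x$, and given $\epsilon > 0$, I apply (2) to the bounded set $\{p \in W : \Vert p - x \Vert \le 1\}$ to get a $\delta > 0$; then for $y, z \in W \otimes_{\QQ} \RR$ with $\Vert y - x \Vert, \Vert z - x \Vert < 1/2$ and $\Vert y - z \Vert$ sufficiently small, approximating $y$ and $z$ by rational sequences and passing to the limit (using (a) to identify the limits with $\tilde{f}(y)$ and $\tilde{f}(z)$) yields $\vert \tilde{f}(y) - \tilde{f}(z) \vert \le \epsilon$. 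Hence $\tilde{f}$ is (uniformly) continuous on a neighbourhood of every point of $W \otimes_{\QQ} \RR$, so continuous there, and $\tilde{f}$ is weakly continuous; uniqueness was already noted. The delicate point — and the reason this step is the crux — is that weak continuity refers to \emph{all} finite-dimensional real subspaces, not merely the rationally defined ones $W \otimes_{\QQ} \RR$, so the reduction to the latter together with the upgrade from continuity on the dense rational points to continuity on the whole real ball must be handled carefully.

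Finally, for the homogeneity claim, suppose $f(\lambda x) = \lambda^d f(x)$ for all $\lambda \in \QQ_{\geq 0}$ and $x \in V$. Given $\lambda \in \RR_{\geq 0}$ and $x \in V \otimes_{\QQ} \RR$, choose $W$ finite-dimensional with $x \in W \otimes_{\QQ} \RR$, a sequence $x_k \to x$ in $W$, and a sequence $\lambda_k \to \lambda$ in $\QQ_{\geq 0}$; then $\lambda_k x_k \in W$ and $\lambda_k x_k \to \lambda x$, so by the construction and (a), $\tilde{f}(\lambda x) = \lim_k f(\lambda_k x_k) = \lim_k \lambda_k^d f(x_k) = \lambda^d \tilde{f}(x)$, using the continuity of $t \mapsto t^d$ on $[0,\infty)$ and $f(x_k) \to \tilde{f}(x)$.
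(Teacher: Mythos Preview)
Your proof is correct and follows essentially the same route as the paper: Heine's theorem for $(1)\Rightarrow(2)$, and for $(2)\Rightarrow(1)$ the standard completion argument (extend $f$ to each $W\otimes_{\QQ}\RR$ via Cauchy sequences, check compatibility for $W\subseteq W'$, then reduce weak continuity on an arbitrary finite-dimensional real subspace $T$ to continuity on some $W\otimes_{\QQ}\RR$ containing it). The paper is terser---it simply invokes ``the well-known way'' for the continuous extension to $W\otimes_{\QQ}\RR$ and declares the homogeneity ``obvious by our construction''---whereas you spell out the $\epsilon$--$\delta$ verification and the sequence argument for homogeneity explicitly, but the underlying ideas are identical.
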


\begin{proof}
``(1) $\Longrightarrow$ (2)'' is obvious by Heine's theorem.

Let us consider ``(2) $\Longrightarrow$ (1)''.
For a vector subspace $W$ of $V$, 
we denote $\rest{f}{W}$ by $f_W$.

(a) We assume that $W$ is finite dimensional.
Let $\{ a_n \}_{n=1}^{\infty}$ be a Cauchy sequence in $W$.
Then there is a bounded set $B$ in $W$ with $a_n \in B$ for all $n$.
Thus, $\{ f(a_n) \}_{n=1}^{\infty}$ is also a Cauchy sequence because
$\rest{f}{B}$ is uniformly continuous.
Hence, by using the well-known way, there is a continuous function 
$\tilde{f}_{W} : W\otimes_{\QQ} \RR \to \RR$ with
$\rest{\tilde{f}_{W}}{W} = f_W$. Namely, if $x \in W\otimes_{\QQ} \RR$ and
$\{ a_n \}_{n=1}^{\infty}$ is a Cauchy sequence in $W$ with
$x = \lim\limits_{n\to\infty} a_n$, then
$\tilde{f}_W(x) = \lim\limits_{n\to\infty} f_W(a_n)$.

(b) Let $W \subseteq W'$ be finite dimensional vector subspaces of $V$.
Then 
\[
\rest{\tilde{f}_{W'}}{W\otimes_{\QQ} \RR} = \tilde{f}_W
\]
because
a Cauchy sequence in $W$ is a Cauchy sequence in $W'$.

Let $x \in V\otimes_{\QQ} \RR$. Then there is a finite dimensional vector space
$W$ of $V$ with $x \in W \otimes_{\QQ} \RR$.
The above (b)  shows that $\tilde{f}_W(x)$ does not depend on the choice of
$W$, so that $\tilde{f}(x)$ is defined by $\tilde{f}_W(x)$.

We need to show that $\tilde{f} : V\otimes_{\QQ} \RR \to \RR$ is weakly continuous.
Let $T$ be a finite dimensional vector subspace of $V\otimes_{\QQ} \RR$.
Then there is a finite dimensional vector subspace $W$ of $V$
with $T \subseteq W\otimes_{\QQ} \RR$.
Note that $\rest{\tilde{f}}{W\otimes_{\QQ} \RR} = \tilde{f}_W$.
Thus $\rest{\tilde{f}}{T}$ is continuous.

The last assertion is obvious by our construction.
\end{proof}

Let $X$ be a $d$-dimensional projective arithmetic variety.
The exact sequence \eqref{eqn:exact:C0:aPic:Pic:QQ} gives rise to
the following exact sequence:
\[
C^0(X) \otimes_{\QQ} \RR \to \aPic_{C^0}(X) \otimes_{\ZZ} \RR \to \Pic(X) \otimes_{\ZZ} \RR \to 0.
\]
Let us consider the natural homomorphisms
$\mu : C^0(X) \otimes_{\QQ} \RR \to C^0(X)$  given by $\mu(f \otimes x) = x f$. 
Let $N$ be the image of $\Ker(\mu)$ via $C^0(X) \otimes_{\QQ} \RR \to \aPic_{C^0}(X) \otimes_{\ZZ} \RR$.
We set 
\[
\aPic_{C^0}(X)_{\RR} = (\aPic_{C^0}(X) \otimes_{\ZZ} \RR)/N
\]
and the canonical homomorphism $\aPic_{C^0}(X) \otimes_{\ZZ} \RR \to \aPic_{C^0}(X)_{\RR}$
is denoted by $\pi$.
Then the above exact sequence yields the following commutative diagram:
\addtocounter{Theorem}{1}
\begin{equation}
\label{eqn:exact:C0:aPic:Pic:RR}
\begin{CD}
C^0(X) @>>> \aPic_{C^0}(X) \otimes_{\ZZ} \QQ @>>> \Pic(X) \otimes_{\ZZ} \QQ @>>> 0 \\
@| @VVV @VVV \\
C^0(X) @>>> \aPic_{C^0}(X)_{\RR} @>>> \Pic(X) \otimes_{\ZZ} \RR @>>> 0,
\end{CD}
\end{equation}
where each horizontal sequence is exact.

The following theorem is one of the main theorem of this paper.

\begin{Theorem}
\label{thm:cont:extension:aPic}
\begin{enumerate}
\renewcommand{\labelenumi}{\rom{(\arabic{enumi})}}
\item
There is a unique weakly continuous and positively homogeneous function $\avol : \aPic_{C^0}(X) \otimes_{\ZZ} \RR \to \RR$
of degree $d$, which is a continuous extension of
$\avol : \aPic_{C^0}(X) \otimes_{\ZZ} \QQ \to \RR$ over $\RR$.

\item
The above arithmetic volume function $\avol : \aPic_{C^0}(X) \otimes_{\ZZ} \RR \to \RR$
descend to $\aPic_{C^0}(X)_{\RR} \to \RR$, that is,
there is a weakly continuous and positively homogeneous function 
\[
\avol' :  \aPic_{C^0}(X)_{\RR} \to \RR
\]
of degree $d$ such that the following diagram is commutative:
\[
\xymatrix{
\aPic_{C^0}(X) \otimes_{\ZZ}\RR \ar[r]^(.68){\avol} \ar[d]_{\pi} & \RR \\
\aPic_{C^0}(X)_{\RR} \ar[ru]_{\avol'} & \\
}
\]
By abuse of notation, $\avol'$ is also denoted by $\avol$.
\end{enumerate}
\end{Theorem}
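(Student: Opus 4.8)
The plan is to deduce both parts from Lemma~\ref{lem:cont:extension}. For part (1), I would apply that lemma with $V = \aPic_{C^0}(X)\otimes_{\ZZ}\QQ$ and $f = \avol$ (note $V\otimes_{\QQ}\RR = \aPic_{C^0}(X)\otimes_{\ZZ}\RR$). The two hypotheses of Lemma~\ref{lem:cont:extension} are exactly what Theorem~\ref{thm:cont:C:0} supplies: given a finite dimensional $\QQ$-subspace $W$ with a basis $\overline{L}_1,\dots,\overline{L}_r$ of continuous hermitian $\QQ$-invertible sheaves, the linear isomorphism $\QQ^r \to W$, $\pmb{x}\mapsto \pmb{x}\cdot\overline{\pmb{L}}$, carries $\rest{\avol}{W}$ to $\pmb{x}\mapsto \avol(\pmb{x}\cdot\overline{\pmb{L}})$, which by Theorem~\ref{thm:cont:C:0} is uniformly continuous on bounded sets (hence continuous); since a finite dimensional linear isomorphism is bi-Lipschitz, $\rest{\avol}{W}$ is weakly continuous and uniformly continuous on bounded subsets of $W$. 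Positive homogeneity of degree $d$ is Proposition~\ref{prop:avol:hom:cont:QQ}(2). Lemma~\ref{lem:cont:extension} then produces the (unique) weakly continuous, positively homogeneous extension $\avol : \aPic_{C^0}(X)\otimes_{\ZZ}\RR\to\RR$ of degree $d$, which settles (1).

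For part (2), I need to check that this extended $\avol$ is constant on the cosets of $N$. The preparatory step is to upgrade Proposition~\ref{prop:avol:hom:cont:QQ}(1) over $\RR$: for $z\in\aPic_{C^0}(X)\otimes_{\ZZ}\RR$ and $f\in C^0(X)$, one has $\vert \avol(z+\overline{\OO}(f))-\avol(z)\vert \le d\Vert f\Vert_{\sup}\,\vol(z_{\RR})$, where $\vol$ denotes the volume of the underlying class in $\Pic(X)\otimes_{\ZZ}\RR$, which is continuous on finite dimensional subspaces by Lemma~\ref{lem:vol:bound:alg}. This follows by placing both $z$ and $\overline{\OO}(f)$ in one finite dimensional $\QQ$-subspace $W\subseteq\aPic_{C^0}(X)\otimes_{\ZZ}\QQ$, approximating $z$ by $\QQ$-points of $W$, applying Proposition~\ref{prop:avol:hom:cont:QQ}(1) there, and passing to the limit using weak continuity of $\avol$ and of $\vol$.

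Now take $n\in N$; by definition of $N$ via $\Ker(\mu)$ we may write $n = \sum_{i=1}^m \overline{\OO}(f_i)\otimes x_i$ with $f_i\in C^0(X)$, $x_i\in\RR$ and $\sum_i x_i f_i = 0$ in $C^0(X)$. Fix $z\in\aPic_{C^0}(X)\otimes_{\ZZ}\RR$ and choose rationals $x_i^{(k)}\to x_i$; set $n^{(k)} = \sum_i \overline{\OO}(f_i)\otimes x_i^{(k)}$ and $g^{(k)} = \sum_i x_i^{(k)} f_i\in C^0(X)$. Because each $x_i^{(k)}$ is rational, $\overline{\OO}(f_i)\otimes x_i^{(k)} = \overline{\OO}(x_i^{(k)} f_i)\otimes 1$, so $n^{(k)} = \overline{\OO}(g^{(k)})\otimes 1$; moreover $\Vert g^{(k)}\Vert_{\sup} = \Vert \sum_i (x_i^{(k)}-x_i) f_i\Vert_{\sup}\to 0$ precisely because $\sum_i x_i f_i = 0$. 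Since $n^{(k)}\to n$ in the finite dimensional real span of $\overline{\OO}(f_1),\dots,\overline{\OO}(f_m)$, weak continuity of $\avol$ gives $\avol(z+n^{(k)})\to\avol(z+n)$, while the estimate of the previous step gives $\vert \avol(z+n^{(k)})-\avol(z)\vert = \vert \avol(z+\overline{\OO}(g^{(k)}))-\avol(z)\vert \le d\Vert g^{(k)}\Vert_{\sup}\,\vol(z_{\RR})\to 0$. Hence $\avol(z+n) = \avol(z)$, so $\avol$ descends to a function $\avol' : \aPic_{C^0}(X)_{\RR}\to\RR$. Positive homogeneity of degree $d$ passes through the surjection $\pi$; weak continuity follows by lifting a basis of any finite dimensional $T\subseteq\aPic_{C^0}(X)_{\RR}$ to linearly independent elements spanning a subspace $\widetilde{T}$ with $\rest{\pi}{\widetilde{T}}$ an isomorphism onto $T$, so that $\rest{\avol'}{T} = \rest{\avol}{\widetilde{T}}\circ(\rest{\pi}{\widetilde{T}})^{-1}$ is continuous. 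I expect the only genuinely delicate point to be the bookkeeping in this last step — the observation that an element of $N$, approached along rational directions, becomes $\overline{\OO}$ of a uniformly small continuous function — while everything else is a routine assembly of Lemma~\ref{lem:cont:extension}, Theorem~\ref{thm:cont:C:0}, and Proposition~\ref{prop:avol:hom:cont:QQ}.
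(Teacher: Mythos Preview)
Your proof is correct and follows essentially the same route as the paper: part~(1) is deduced from Theorem~\ref{thm:cont:C:0} via Lemma~\ref{lem:cont:extension}, and part~(2) proceeds by writing an element of $N$ as $\sum_i \overline{\OO}(f_i)\otimes x_i$ with $\sum_i x_i f_i = 0$, approximating the $x_i$ by rationals so that the sum becomes $\overline{\OO}(g^{(k)})$ with $\Vert g^{(k)}\Vert_{\sup}\to 0$, and invoking the $\RR$-extension of Proposition~\ref{prop:avol:hom:cont:QQ}(1). You are in fact slightly more careful than the paper in explicitly verifying weak continuity and positive homogeneity of the descended function $\avol'$.
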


\begin{proof}
The first assertion follows from Theorem~\ref{thm:cont:C:0} and Lemma~\ref{lem:cont:extension}.
For the second assertion, let us consider the following claim:

\begin{Claim}
Every element of $N$ can be written by a form
\[
\overline{\OO}(f_1) \otimes x_1 + \cdots + \overline{\OO}(f_r) \otimes x_r
\]
for $f_1,\ldots,f_r \in C^0(X)$ and $x_1,\ldots,x_r \in \RR$ with $x_1f_1+\cdots+x_rf_r = 0$.
\end{Claim}

\begin{proof}
For $\omega = f_1 \otimes x_1 + \cdots + f_r \otimes x_r \in C^0(X) \otimes_{\QQ} \RR$,
$\omega \in \Ker(\mu)$ if and only if $x_1 f_1 + \cdots + x_r f_r = 0$,
which proves the claim.
\end{proof}

Let $\overline{L} \in \aPic_{C^0}(X) \otimes \RR$,
$f_1,\ldots,f_r \in C^0(X)$ and $x_1,\ldots,x_r \in \RR$ with
$x_1f_1+\cdots+x_rf_r = 0$.
It is sufficient to show that
\[
\avol\left(\overline{L} + \overline{\OO}(f_1) \otimes x_1 + \cdots + \overline{\OO}(f_r) \otimes x_r \right) = \avol(\overline{L}).
\]
Let us choose a sequence $\{ \pmb{x}_n \}_{n=1}^{\infty}$ in $\QQ^r$ with
$\lim_{n\to\infty} \pmb{x}_n = (x_1,\ldots,x_r)$.
Then, by Theorem~\ref{thm:cont:extension:aPic},
\begin{multline*}
\lim_{n\to\infty}
\avol\left(\overline{L} + \overline{\OO}(f_1) \otimes \pmb{x}_n(1) + \cdots + \overline{\OO}(f_r) \otimes \pmb{x}_n(r)\right) \\
=
\avol\left(\overline{L} + \overline{\OO}(f_1) \otimes x_1 + \cdots + \overline{\OO}(f_r) \otimes x_r\right).
\end{multline*}
Since $\pmb{x}_n \in \QQ^r$, if we set $\phi_n = \pmb{x}_n(1)f_1 + \cdots + \pmb{x}_n(r) f_r$,
then 
\[
\overline{\OO}(f_1) \otimes \pmb{x}_n(1) + \cdots + \overline{\OO}(f_r) \otimes \pmb{x}_n(r)
= \overline{\OO}(\phi_n).
\]
Note that
\begin{align*}
\Vert \phi_n \Vert_{\sup} & =
\Vert (\pmb{x}_n(1) - x_1) f_1 + \cdots + (\pmb{x}_n(r) - x_r) f_r \Vert_{\sup} \\
& \leq
\vert \pmb{x}_n(1) - x_1 \vert \Vert f_1 \Vert_{\sup}
+ \cdots + \vert \pmb{x}_n(r) - x_r\vert \Vert f_r \Vert_{\sup}.
\end{align*}
Thus $\lim_{n\to\infty} \Vert \phi_n \Vert_{\sup} = 0$. On the other hand, by (1) of
Proposition~\ref{cor:thm:cont:extension:aPic},
\[
\vert \avol(\overline{L} + \overline{\OO}(\phi_n)) - \avol(\overline{L}) \vert \leq
d \Vert \phi_n \Vert_{\sup} \vol(L_{\QQ}).
\]
Therefore, 
\begin{multline*}
\lim_{n\to\infty} \avol\left(\overline{L} + \overline{\OO}(f_1) \otimes \pmb{x}_n(1) + \cdots + \overline{\OO}(f_r) \otimes \pmb{x}_n(r)\right) \\
= \lim_{n\to\infty} \avol(\overline{L} + \overline{\OO}(\phi_n)) =
\avol(\overline{L}).
\end{multline*}
Thus we get (2).
\end{proof}

\begin{Example}
Let $K = \QQ(\sqrt{2})$, $\OO_K = \ZZ[\sqrt{2}]$ and $X = \Spec(\OO_K)$.
Note that $\{ \sigma : K \hookrightarrow \CC\} = \{\sigma_1, \sigma_2 \}$,
where $\sigma_1(\sqrt{2}) = \sqrt{2}$ and $\sigma_2(\sqrt{2}) = -\sqrt{2}$.
Then $C^0(X) = \RR^2$ in the natural way.
Moreover, the class number of $K$ is $1$ and the fundamental unit of $K$
is $\sqrt{2} + 1$. Thus, if we set  $\omega = \left(\log(\sqrt{2} + 1), \log(\sqrt{2} - 1)\right)$,
then we have an exact sequence
\[
0 \to \ZZ \omega \to \RR^2 \overset{\overline{\OO}}{\longrightarrow} \aPic_{C^0}(X) \to 0,
\]
which yields the following commutative diagram:
\[
\begin{CD}
0 @>>>  \QQ \omega @>>> \RR^2 @>{\overline{\OO}}>> \aPic_{C^0}(X) \otimes_{\ZZ} \QQ @>>> 0 \\
@. @VVV @|  @VVV @. \\
0 @>>>  \RR \omega @>>> \RR^2 @>{\overline{\OO}}>> \aPic_{C^0}(X)_{\RR} @>>> 0,
\end{CD}
\]
where each horizontal sequence is exact. In particular,
the canonical homomorphism $\aPic_{C^0}(X) \otimes_{\ZZ} \QQ  \to \aPic_{C^0}(X)_{\RR}$ is
not injective.
Moreover, it is easy to see that
\[
\avol(\overline{\OO}(\lambda_1, \lambda_2)) = \begin{cases}
\lambda_1 + \lambda_2 & \text{if $\lambda_1 + \lambda_2 \geq 0$}, \\
0 & \text{otherwise}.
\end{cases}
\]
\end{Example}

The arithmetic volume function
$\avol : \aPic_{C^0}(X)_{\RR} \to \RR$ has the following properties:

\begin{Proposition}
\label{cor:thm:cont:extension:aPic}
\begin{enumerate}
\renewcommand{\labelenumi}{\rom{(\arabic{enumi})}}
\item
For all $\overline{L} \in \aPic_{C^0}(X)_{\RR}$ and $f \in C^0(X)$, we have
\[
\left\vert \avol(\overline{L} + \overline{\OO}(f)) -
\avol(\overline{L} )\right\vert \leq
d \Vert f \Vert_{\sup} \vol(L_{\QQ}).
\]

\item
Let $\nu : X' \to X$ be a morphism of projective arithmetic varieties.
Then there is a unique homomorphism $\nu^* : \aPic_{C^0}(X)_{\RR} \to \aPic_{C^0}(X')_{\RR}$
such that the following diagram is commutative:
\[
\begin{CD}
\aPic_{C^0}(X) \otimes_{\ZZ} \RR @>{\nu^* \otimes \operatorname{id}} >> \aPic_{C^0}(X') \otimes_{\ZZ} \RR \\
@V{\pi}VV @V{\pi}VV \\
\aPic_{C^0}(X)_{\RR} @>{\nu^*}>> \aPic_{C^0}(X')_{\RR}.
\end{CD}
\]
Moreover,  if $\nu$ is birational, then $\avol(\nu^*(\overline{L})) = \avol(\overline{L})$ for
$\overline{L} \in \aPic_{C^0}(X)_{\RR}$.

\item
Let $\overline{L}_1, \ldots, \overline{L}_r,
\overline{A}_1, \ldots, \overline{A}_{r'}$ be 
$C^{\infty}$-hermitian
$\QQ$-invertible sheaves on $X$.
If we set $\overline{\pmb{L}} = 
(\pi(\overline{L}_1), \ldots, \pi(\overline{L}_r))$ and
$\overline{\pmb{A}} =
(\pi(\overline{A}_1), \ldots, \pi(\overline{A}_{r'}))$, then
there is a positive constant $C$
depending only on $X$ and $\overline{L}_1, \ldots, \overline{L}_r,
\overline{A}_1, \ldots, \overline{A}_{r'}$
such that
\[
\vert \avol(\pmb{a} \cdot \overline{\pmb{L}} +
\pmb{\delta} \cdot \overline{\pmb{A}})
- \avol(\pmb{a} \cdot \overline{\pmb{L}}) \vert
\leq C \vert (\pmb{a}, \pmb{\delta}) \vert_{1}^{d-1}\vert \pmb{\delta} \vert_{1}
\]
for all $\pmb{a} \in \RR^{r}$ and
$\pmb{\delta} \in \RR^{r'}$.

\item
Let $V$ be a finite dimensional vector subspace of $\aPic_{C^0}(X)_{\RR}$ and
$\Vert\cdot\Vert$ a norm of $V$.
Let $K$ be a compact set in $V$.
For any positive real number $\epsilon$,
there are positive real number $\delta$ and $\delta'$ such that
\[
\vert \avol(x+ a + \overline{\OO}(g)) - \avol(x) \vert
\leq \epsilon
\]
for all $x \in K$, $a \in V$ and $g \in C^0(X)$
with $\Vert a \Vert \leq
\delta$ and $\Vert g \Vert_{\sup} \leq \delta'$.

\item
Let $\{ x_n \}_{n=1}^{\infty}$ be a sequence in 
a finite dimensional vector subspace of $\aPic_{C^0}(X)_{\RR}$ and
$\{ f_n \}_{n=1}^{\infty}$ a sequence in $C^0(X)$ such that
$\{ x_n \}_{n=1}^{\infty}$ converges to $x$ in the usual topology and
$\{ f_n \}_{n=1}^{\infty}$ converges uniformly to $f$.
Then
\[
\lim_{n\to\infty} \avol\left(x_n
+ \overline{\OO}(f_n)\right)
= \avol\left(x 
+ \overline{\OO}(f)\right).
\]

\item
Let $\overline{L}_1, \ldots, \overline{L}_r$ be $\QQ$-effective continuous hermitian $\QQ$-invertible sheaves
on $X$.
For $(a_1, \ldots, a_r), (a'_1, \ldots, a'_r) \in \RR^r$ and $h, h' \in C^0(X)$,
if $a_i \leq a'_i$ \rom{(}$\forall i$\rom{)} and $h \leq h'$, then
\[
\hspace{4em}
\avol(a_1 \pi(\overline{L}_1) + \cdots a_r \pi(\overline{L}_r) + \overline{\OO}(h))
\leq
\avol(a'_1 \pi(\overline{L}_1) + \cdots a'_r \pi(\overline{L}_r) + \overline{\OO}(h')).
\]
\end{enumerate}
\end{Proposition}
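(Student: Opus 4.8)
The plan is to deduce each assertion from its already-established counterpart over $\QQ$ by passing to a limit, using the weak continuity of the extended function $\avol$ furnished by Theorem~\ref{thm:cont:extension:aPic} together with the density of $\QQ$-points. It is cleanest to run the density arguments upstairs in $\aPic_{C^0}(X)\otimes_{\ZZ}\RR$, where the image of $\aPic_{C^0}(X)\otimes_{\ZZ}\QQ$ is dense in every finite dimensional subspace, and then descend to $\aPic_{C^0}(X)_{\RR}$ through $\pi$; this is harmless because, by construction and by the diagram~\eqref{eqn:exact:C0:aPic:Pic:RR}, $\avol$ on $\aPic_{C^0}(X)_{\RR}$ is $\avol$ on $\aPic_{C^0}(X)\otimes_{\ZZ}\RR$ factored through $\pi$, and $\zeta$, $\overline{\OO}$ and the pullbacks $\nu^{*}$ are all compatible with $\pi$.

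For (1), lift $\overline{L}$ to $\aPic_{C^0}(X)\otimes_{\ZZ}\RR$, approximate it by $\QQ$-classes inside a fixed finite dimensional subspace, apply (1) of Proposition~\ref{prop:avol:hom:cont:QQ} to each approximant together with $\overline{\OO}(f)$, and let the approximation converge: the left-hand side converges by weak continuity of $\avol$ on the finite dimensional subspace spanned by the approximants and $\overline{\OO}(f)$, and the right-hand side converges because the algebraic volume $\vol$ is continuous on $\Pic(X)\otimes_{\ZZ}\RR$ (Lemma~\ref{lem:vol:bound:alg}). Part (3) is the same idea applied once: both sides of the desired inequality are continuous functions of $(\pmb{a},\pmb{\delta})\in\RR^{r+r'}$ --- the left side because it factors through the linear map $(\pmb{a},\pmb{\delta})\mapsto \pmb{a}\cdot\overline{\pmb{L}}+\pmb{\delta}\cdot\overline{\pmb{A}}$ into the finite dimensional subspace of $\aPic_{C^0}(X)_{\RR}$ spanned by the $\pi(\overline{L}_i)$ and $\pi(\overline{A}_j)$, composed with $\avol$ --- and they agree on the dense subset $\QQ^{r+r'}$ by Theorem~\ref{thm:C:infty:cont} (and (2) of Theorem~\ref{thm:cont:extension:aPic}), so the inequality persists on all of $\RR^{r+r'}$ with the same constant $C$.

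For (2), recall from the Claim in the proof of Theorem~\ref{thm:cont:extension:aPic} that every element of $N$ has the form $\overline{\OO}(f_{1})\otimes x_{1}+\cdots+\overline{\OO}(f_{r})\otimes x_{r}$ with $x_{1}f_{1}+\cdots+x_{r}f_{r}=0$; since $\nu^{*}\overline{\OO}(f_{i})=\overline{\OO}(\nu^{*}(f_{i}))$ and $\sum_{i}x_{i}\nu^{*}(f_{i})=\nu^{*}(\sum_{i}x_{i}f_{i})=0$, the homomorphism $\nu^{*}\otimes\operatorname{id}$ carries $N$ into the corresponding subspace $N'$ on $X'$, hence descends to $\nu^{*}:\aPic_{C^0}(X)_{\RR}\to\aPic_{C^0}(X')_{\RR}$, and uniqueness is immediate since $\pi$ is surjective. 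When $\nu$ is birational, (3) of Proposition~\ref{prop:avol:hom:cont:QQ} gives $\avol(\nu^{*}\overline{L})=\avol(\overline{L})$ for $\overline{L}\in\aPic_{C^0}(X)\otimes_{\ZZ}\QQ$; both sides are weakly continuous in $\overline{L}\in\aPic_{C^0}(X)\otimes_{\ZZ}\RR$ (as $\nu^{*}\otimes\operatorname{id}$ is linear), so they coincide everywhere by $\QQ$-density, and the identity descends through $\pi$.

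For (4), fix $\delta_{0}>0$ and put $K_{1}=\{x+a: x\in K,\ a\in V,\ \Vert a\Vert\le\delta_{0}\}$, a compact subset of the finite dimensional space $V$ on which $\avol$ is continuous; by Heine's theorem $\avol|_{K_{1}}$ is uniformly continuous, which produces $\delta\in(0,\delta_{0}]$ controlling the displacement along $a\in V$, and by part (1) the displacement along $\overline{\OO}(g)$ is at most $d\Vert g\Vert_{\sup}\sup_{y\in K_{1}}\vol(\zeta(y)_{\QQ})$, a quantity with finite supremum by compactness, so a suitable $\delta'$ finishes the proof. Part (5) follows at once by applying (4) with $K=\{x+\overline{\OO}(f)\}$ inside $V+\RR\,\overline{\OO}(f)$ and writing $x_{n}+\overline{\OO}(f_{n})=(x+\overline{\OO}(f))+(x_{n}-x)+\overline{\OO}(f_{n}-f)$. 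For (6), over $\QQ$ the inequality is a case of (2) of Proposition~\ref{prop:ineq:Pic:C:0}: for rational $b_{i}\le b'_{i}$ one has $\sum_{i}(b'_{i}-b_{i})\pi(\overline{L}_{i})\geq_{\QQ}0$ because each $\overline{L}_{i}$ is $\QQ$-effective, and $\overline{\OO}(h'-h)\geq 0$ because the constant section $1\in H^{0}(X,\OO_{X})$ has $\Vert 1\Vert_{\sup}=\sup_{X(\CC)}\exp(-(h'-h))\le 1$ when $h\le h'$; approximating $a_{i},a'_{i}$ by rationals $b_{i,n}=\lfloor na_{i}\rfloor/n\le b'_{i,n}=\lceil na'_{i}\rceil/n$ and using weak continuity of $\avol$ gives the statement over $\RR$. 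The only step that is not a purely finite dimensional density-and-continuity argument is the control of the $\overline{\OO}(g)$-direction in (4) (and hence in (5)), which is exactly what part (1) supplies uniformly on compacta; everything else is routine given Theorem~\ref{thm:cont:extension:aPic}.
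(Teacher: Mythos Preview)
Your proof is correct and follows essentially the same approach as the paper: each item is obtained from its $\QQ$-analogue (Proposition~\ref{prop:avol:hom:cont:QQ}, Theorem~\ref{thm:C:infty:cont}, Proposition~\ref{prop:ineq:Pic:C:0}) by density and the weak continuity supplied by Theorem~\ref{thm:cont:extension:aPic}, and your treatment of~(2) matches the paper's verbatim. The one place you deviate is~(4): the paper lifts $V$ into $\pi(W\otimes_{\QQ}\RR)$ for a finite dimensional $W\subseteq\aPic_{C^0}(X)\otimes_{\ZZ}\QQ$ and appeals to Theorem~\ref{thm:cont:C:0} (which already packages the $a$- and $\overline{\OO}(g)$-directions together), whereas you separate the two directions, handling $a$ via Heine's theorem applied to the weakly continuous $\avol|_{V}$ and $\overline{\OO}(g)$ via the just-proved part~(1) together with the boundedness of $\vol(\zeta(\cdot)_{\QQ})$ on the compact set $K_{1}$. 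Both routes are valid; yours is slightly more self-contained once Theorem~\ref{thm:cont:extension:aPic} and part~(1) are in hand, while the paper's is terser because Theorem~\ref{thm:cont:C:0} does the work in one stroke. A minor citation slip: you invoke Lemma~\ref{lem:vol:bound:alg} for the \emph{continuity} of the algebraic volume, but that lemma only records the homogeneity bound; the continuity is the well-known fact (cf.\ \cite{Laz}) quoted inside its proof.
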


\begin{proof}
(1) It follows from (1) of Proposition~\ref{prop:avol:hom:cont:QQ} and Theorem~\ref{thm:cont:extension:aPic}.

(2) Let $f_1, \ldots, f_r \in C^0(X)$ and $x_1, \ldots, x_r \in \RR$ with
$x_1 f_1 + \cdots + x_r f_r = 0$.
Then
\[
(\nu^* \otimes \operatorname{id})\left(\sum \overline{\OO}(f_i) \otimes x_i \right)
= \sum \overline{\OO}(\nu^*(f_i)) \otimes x_i 
\]
and
\[
x_1 \nu^*(f_1) + \cdots + x_r \nu^*(f_r) = \nu^*(x_1 f_1 + \cdots + x_r f_r) = 0.
\]
This observation shows the first assertion.
The second assertion is a consequence of (3) of Proposition~\ref{prop:avol:hom:cont:QQ} and Theorem~\ref{thm:cont:extension:aPic}.

(3) It is implied by Theorem~\ref{thm:C:infty:cont} and Theorem~\ref{thm:cont:extension:aPic}.

(4) We can find a finite dimensional vector subspace $W$ of $\aPic_{C^0}(X) \otimes_{\ZZ} \QQ$ such that
$V \subseteq \pi(W \otimes_{\QQ} \RR)$.
Thus it follows from
Theorem~\ref{thm:cont:C:0} and Theorem~\ref{thm:cont:extension:aPic}.

(5) Let $V$ be a vector space generated by $\{ x_n \}_{n=1}^{\infty}$ and $\overline{\OO}(f)$.
Note that $V$ is finite dimensional and
$x_n + \overline{\OO}(f_n) = x + \overline{\OO}(f) + (x_n-x) + \overline{\OO}(f_n - f)$.
Thus, applying (4)  to $V$,
we can see (5)

(6) This can be proved by (2) of Proposition~\ref{prop:ineq:Pic:C:0} and Theorem~\ref{thm:cont:extension:aPic}.
\end{proof}

\renewcommand{\theequation}{\arabic{section}.\arabic{Theorem}.\arabic{Claim}}

\section{Approximation of the arithmetic volume function}
Let $X$ be a $d$-dimensional projective arithmetic variety.
The purpose of this section is to prove the following theorem,
which gives an approximation of the arithmetic volume function in term
of $\ah$.

\begin{Theorem}
\label{thm:vol:limit:arith:var}
Let $M$ be a finitely generated $\ZZ$-submodule of $\Pic_{C^0}(X)$.
Let $\{ \overline{A}_n \}_{n=1}^{\infty}$ be a sequence in $M$ and
$\{ f_n \}_{n=1}^{\infty}$ a sequence in $C^0(X)$ such that
$\{ \overline{A}_n \otimes 1/n\}_{n=1}^{\infty}$ converges  to $\overline{A}$ in $M \otimes \RR$
in the usual topology and 
$\{ f_n/n \}_{n=1}^{\infty}$ converges uniformly to $f$.
Then
\[
\lim_{n\to\infty} \frac{\ah(\overline{L}_n + \overline{\OO}(f_n)) }{n^d/d!} \\
=
\avol( \overline{A} + \overline{\OO}(f)).
\]
\end{Theorem}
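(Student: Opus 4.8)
The plan is to reduce the statement, in several steps, to the case where all the hermitian sheaves involved are $C^{\infty}$, $X$ is generically smooth, and the integral coefficient vector grows linearly in $n$; in that situation the multi-indexed fundamental estimate (Theorem~\ref{thm:h:0:estimate:big:main}), Chen's theorem, and the weak continuity of $\avol$ (Theorem~\ref{thm:cont:extension:aPic}) do the work. First I would fix $\ZZ$-module generators $\overline{E}_1,\dots,\overline{E}_r$ of $M$, set $\overline{\pmb{E}}=(\overline{E}_1,\dots,\overline{E}_r)$, and lift each $\overline{A}_n$ to some $\pmb{a}_n\in\ZZ^r$ with $\pmb{a}_n\cdot\overline{\pmb{E}}=\overline{A}_n$; since $\{\overline{A}_n\otimes 1/n\}$ is bounded in $M\otimes_{\ZZ}\RR$, the lift can be chosen with $|\pmb{a}_n|_1=O(n)$. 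By the usual subsequence principle it then suffices to treat a subsequence along which $\pmb{a}_n/n\to\pmb{\alpha}$ for some $\pmb{\alpha}\in\RR^r$; necessarily $\pmb{\alpha}\cdot\overline{\pmb{E}}=\lim_n\overline{A}_n\otimes 1/n=\overline{A}$ in $M\otimes_{\ZZ}\RR$, so $\pmb{\alpha}\cdot\pi(\overline{\pmb{E}})=\pi(\overline{A})$, and the goal becomes $\ah(\pmb{a}_n\cdot\overline{\pmb{E}}+\overline{\OO}(f_n))/(n^d/d!)\to\avol(\pmb{\alpha}\cdot\pi(\overline{\pmb{E}})+\overline{\OO}(f))$.

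Then I would carry out three reductions. (i) Replace $X$ by a generic resolution $\nu\colon X'\to X$ (factoring through the normalization): $\avol$ is unchanged by Proposition~\ref{cor:thm:cont:extension:aPic}\,(2), and $\ah$ changes by only $O(n^{d-1})$ — passing to the normalization costs at most $\log\#H^0$ of a coherent sheaf supported in dimension $\le d-1$ twisted by one with leading coefficients $O(n)$, while for the resolution of the now-normal variety $\nu_*\OO_{X'}=\OO_X$, so $\ah$ is literally unchanged — and $n^{d-1}/(n^d/d!)\to 0$. (ii) By Lemma~\ref{lem:Stone:Weierstrass} choose $\phi_i\in C^0(X)$ with $\overline{E}_i+\overline{\OO}(\phi_i)$ being $C^{\infty}$, and rewrite $\pmb{a}_n\cdot\overline{\pmb{E}}+\overline{\OO}(f_n)=\sum_i\pmb{a}_n(i)(\overline{E}_i+\overline{\OO}(\phi_i))+\overline{\OO}(f_n-\pmb{a}_n\cdot\pmb{\phi})$, observing that $(f_n-\pmb{a}_n\cdot\pmb{\phi})/n\to f-\pmb{\alpha}\cdot\pmb{\phi}$ uniformly and that the limit on the $\avol$-side is unchanged; so I may assume the $\overline{E}_i$ are $C^{\infty}$. (iii) Given $\epsilon>0$, choose by Stone--Weierstrass a $C^{\infty}$ function $\psi$ with $\|f-\psi\|_{\sup}<\epsilon$; for $n\gg 0$ one has $n(\psi-2\epsilon)\le f_n\le n(\psi+2\epsilon)$ pointwise, so by monotonicity of $\ah$ (Proposition~\ref{prop:ineq:Pic:C:0}\,(1))
\[
\ah\bigl(\pmb{a}_n\cdot\overline{\pmb{E}}+n\overline{\OO}(\psi-2\epsilon)\bigr)\ \le\ \ah\bigl(\pmb{a}_n\cdot\overline{\pmb{E}}+\overline{\OO}(f_n)\bigr)\ \le\ \ah\bigl(\pmb{a}_n\cdot\overline{\pmb{E}}+n\overline{\OO}(\psi+2\epsilon)\bigr),
\]
and the two outer quantities have the shape $\ah(\pmb{b}_n\cdot\overline{\pmb{F}})$ with $\overline{\pmb{F}}$ a finite tuple of $C^{\infty}$-hermitian invertible sheaves (the $\overline{E}_i$ together with $\overline{\OO}(\psi\pm 2\epsilon)$) and $\pmb{b}_n=(\pmb{a}_n,n)$, so $\pmb{b}_n/n\to(\pmb{\alpha},1)$.

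The core is the assertion $(\star)$: for $C^{\infty}$-hermitian invertible sheaves $\overline{\pmb{F}}=(\overline{F}_1,\dots,\overline{F}_s)$ on a generically smooth $X$ and $\pmb{b}_n\in\ZZ^s$ with $\pmb{b}_n/n\to\pmb{\beta}$, one has $\ah(\pmb{b}_n\cdot\overline{\pmb{F}})/(n^d/d!)\to\avol(\pmb{\beta}\cdot\pi(\overline{\pmb{F}}))$. To prove it (the case $\pmb{\beta}=0$ being immediate, both sides being $0$), fix $\epsilon>0$, a rational $\pmb{\beta}'=\pmb{p}/q$ with $|\pmb{\beta}-\pmb{\beta}'|_1<\epsilon<|\pmb{\beta}|_1/2$, and a $C^{\infty}$-hermitian invertible sheaf $\overline{A}$ with $\overline{A}\pm\overline{F}_i$ effective for all $i$. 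Writing $n=qk_n+\rho_n$ ($0\le\rho_n<q$) and $\pmb{b}_n'=k_n\pmb{p}$, one gets $\pmb{b}_n'/n\to\pmb{\beta}'$ and, by Chen's theorem applied to the fixed sheaf $\pmb{p}\cdot\overline{\pmb{F}}$, $\ah(\pmb{b}_n'\cdot\overline{\pmb{F}})/(n^d/d!)=\ah(k_n(\pmb{p}\cdot\overline{\pmb{F}}))/(n^d/d!)\to q^{-d}\avol(\pmb{p}\cdot\overline{\pmb{F}})=\avol(\pmb{\beta}'\cdot\pi(\overline{\pmb{F}}))$. Since $-|\pmb{e}_n|_1\overline{A}\le\pmb{e}_n\cdot\overline{\pmb{F}}\le|\pmb{e}_n|_1\overline{A}$ for $\pmb{e}_n:=\pmb{b}_n-\pmb{b}_n'$, monotonicity of $\ah$ and two applications of Theorem~\ref{thm:h:0:estimate:big:main} (with $\overline{\pmb{L}}=\overline{\pmb{F}}$, $\pmb{a}=\pmb{b}_n'$, and $(b,c)=(|\pmb{e}_n|_1,0)$, resp.\ $(|\pmb{e}_n|_1,|\pmb{e}_n|_1)$; the index conditions $|\pmb{b}_n'|_1\ge|\pmb{e}_n|_1\ge 0$ and $|\pmb{b}_n'|_1\ge a_0$ hold for $n\gg 0$ because $\epsilon<|\pmb{\beta}|_1/2$) yield
\[
\bigl|\ah(\pmb{b}_n\cdot\overline{\pmb{F}})-\ah(\pmb{b}_n'\cdot\overline{\pmb{F}})\bigr|\ \le\ C|\pmb{e}_n|_1|\pmb{b}_n'|_1^{d-1}+D|\pmb{b}_n'|_1^{d-1}\log|\pmb{b}_n'|_1.
\]
Dividing by $n^d/d!$, letting $n\to\infty$ (so $|\pmb{e}_n|_1/n\to|\pmb{\beta}-\pmb{\beta}'|_1<\epsilon$ and the $D$-term is killed), then $\epsilon\to 0$ while using the weak continuity of $\avol$ on the finite-dimensional span of $\pi(\overline{F}_1),\dots,\pi(\overline{F}_s)$ (Theorem~\ref{thm:cont:extension:aPic}), proves $(\star)$. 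Applying $(\star)$ to the two sides of step (iii), both outer bounds converge to $\avol(\pmb{\alpha}\cdot\pi(\overline{\pmb{E}})+\overline{\OO}(\psi\pm 2\epsilon))$, which by Proposition~\ref{cor:thm:cont:extension:aPic}\,(1) differs from $\avol(\pi(\overline{A})+\overline{\OO}(f))$ by at most $3d\epsilon\,\vol(A_{\QQ})$ (here $A$ is the line bundle underlying $\pmb{\alpha}\cdot\overline{\pmb{E}}$); letting $\epsilon\to 0$ finishes the proof.

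The main obstacle I expect is step $(\star)$: arranging the comparison with the rational approximant so that the error coming from Theorem~\ref{thm:h:0:estimate:big:main} is genuinely of size $O(\epsilon)n^d+o(n^d)$, and checking its index hypotheses all along the sequence. The reduction to generically smooth $X$ is the other delicate point, since $\ah$ — unlike $\avol$ — is not a birational invariant; but the discrepancy there is $O(n^{d-1})$, hence negligible after division by $n^d/d!$.
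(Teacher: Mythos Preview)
Your overall strategy and the core step $(\star)$ are correct and close to the paper's; in fact your proof of $(\star)$ (direct rational approximation plus one appeal to Theorem~\ref{thm:h:0:estimate:big:main}) is a bit slicker than the paper's three sub-cases $\pmb{a}\in\ZZ^l\Rightarrow\QQ^l\Rightarrow\RR^l$. Reductions (ii) and (iii) are fine.

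The genuine gap is in reduction (i), the normalization step. You assert that $\ah$ changes by $O(n^{d-1})$ because the cokernel of $H^0(X,\,\pmb{a}_n\!\cdot\!\pmb{L})\hookrightarrow H^0(\widetilde X,\,\pmb{a}_n\!\cdot\!\nu^*\pmb{L})$ sits in $H^0(X,\,Q\otimes\pmb{a}_n\!\cdot\!\pmb{L})$ with $Q=\nu_*\OO_{\widetilde X}/\OO_X$ supported in dimension $\le d-1$, and then take $\log\#$ of that group. But nothing forces $X_{\QQ}$ to be normal: if it is not, $Q_{\QQ}\neq 0$, the non-normal locus has a horizontal component, and $H^0(X,\,Q\otimes\pmb{a}_n\!\cdot\!\pmb{L})$ is a $\ZZ$-module of \emph{positive rank}, not a finite group, so ``$\log\#$'' is meaningless. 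There is no evident direct bound on $\ah(\widetilde X)-\ah(X)$ in this situation, because the two lattices sit in real vector spaces of different dimensions and you have no control on the covolume of the quotient.

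The paper circumvents this by doing the normalization step \emph{last}, using a conductor trick: pick an ample $\overline{H}$ on $\widetilde X$ and a small nonzero $s\in H^0(\widetilde X,\,H\otimes\mathcal I_{\widetilde X/X})$; multiplication by $s$ gives
\[
\ah\!\bigl(\widetilde X,\;\pmb{a}_n\!\cdot\!\nu^*\overline{\pmb{L}}-\overline{H}+\overline{\OO}(\nu^*f_n)\bigr)\;\le\;\ah\!\bigl(X,\;\pmb{a}_n\!\cdot\!\overline{\pmb{L}}+\overline{\OO}(f_n)\bigr)\;\le\;\ah\!\bigl(\widetilde X,\;\pmb{a}_n\!\cdot\!\nu^*\overline{\pmb{L}}+\overline{\OO}(\nu^*f_n)\bigr),
\]
and then one applies the already-established result on the generically smooth model $X'$ of $\widetilde X$ (your $(\star)$, with the enlarged tuple $(\nu^*\overline{\pmb{L}},\overline{H})$ and coefficients $(\pmb{a}_n,-1)/n\to(\pmb{\alpha},0)$) to see that the two outer terms have the same limit $\avol(\pmb{\alpha}\!\cdot\!\nu^*\overline{\pmb{L}}+\overline{\OO}(\nu^*f))=\avol(\pmb{\alpha}\!\cdot\!\overline{\pmb{L}}+\overline{\OO}(f))$. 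So your argument becomes correct if you reorder: prove $(\star)$ on a generically smooth $X'$ first (using only $\nu_*\OO_{X'}=\OO_{\widetilde X}$ for the normal-to-smooth step, which you handle correctly), and only then squeeze $\ah(X)$ between $\ah(\widetilde X,\cdot)$ and $\ah(\widetilde X,\cdot-\overline H)$ via the conductor.
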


\begin{proof}
Let $\overline{L}_1, \ldots, \overline{L}_l$ be a generator of $M$
such that $\overline{L}_1, \ldots, \overline{L}_r$ ($r \leq l$) gives rise to
a free basis of $M/M_{tor}$ and
that $\overline{L}_{r+1}, \ldots, \overline{L}_l$ are torsion elements of $M$.
Here we set $\overline{\pmb{L}} = (\overline{L}_1, \ldots, \overline{L}_l)$.
Let $N$ be a positive integer such that
$N \overline{L}_{r+1} = \cdots = N \overline{L}_l = 0$.
Then we can find $\pmb{a}_n \in \ZZ^l$ such that
$\overline{A}_n = \pmb{a}_n \cdot \overline{\pmb{L}}$ and
$0 \leq \pmb{a}_n(i) \leq N$ for all $i = r+1, \ldots, l$.
By our assumption, for $1 \leq i \leq r$,
$\{ \pmb{a}_n(i)/n \}_{n=1}^{\infty}$ converges to $a_i \in \RR$.
Thus, if we set $\pmb{a} =(a_1, \ldots, a_r, 0, \ldots, 0)$,
then $\lim_{n\to\infty} \pmb{a}_n/n = \pmb{a}$.
Therefore, it is sufficient to show the following theorem.
\end{proof}

\begin{Theorem}
\label{thm:vol:limit:arith:var:L}
Let $\{ \pmb{a}_n \}_{n=1}^{\infty}$ be a sequence in 
$\ZZ^l$ and
$\{ f_n \}_{n=1}^{\infty}$ a sequence in $C^0(X)$ such that
\[
\pmb{a} = \lim\limits_{n\to\infty}\pmb{a}_n/n \in \RR^l\quad\text{and}\quad
\lim_{n\to\infty} \Vert (f_n/n) - f \Vert_{\sup} = 0
\]
for some $f \in C^0(X)$.
Then,
for a finite sequence $\overline{\pmb{L}} = (\overline{L}_1, \ldots, \overline{L}_l)$ in  $\aPic_{C^0}(X)$,
\[
\lim_{n\to\infty} \frac{\ah(\pmb{a}_n \cdot \overline{\pmb{L}} + \overline{\OO}(f_n))}{n^d/d!}
= \avol(\pmb{a} \cdot \overline{\pmb{L}} + \overline{\OO}(f)).
\]
\end{Theorem}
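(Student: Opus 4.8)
The plan is to bound $\ah(\pmb{a}_n\cdot\overline{\pmb{L}}+\overline{\OO}(f_n))$ above and below by $\ah$ of a \emph{fixed} $C^{\infty}$-hermitian invertible sheaf evaluated at a multiple $k\sim n/m$, up to errors that are $o(n^{d})$ or $O(\epsilon n^{d})$: Chen's theorem \cite{Chen} handles the fixed sheaf, the multi-indexed fundamental estimation (Theorem~\ref{thm:h:0:estimate:big:main}) lets one compare line bundles with different coefficient vectors, and weak continuity of $\avol$ lets $\epsilon\to0$ at the end. I would begin with two reductions. By Lemma~\ref{lem:Stone:Weierstrass} one writes $\overline{L}_i=\overline{L}'_i+\overline{\OO}(\phi_i)$ with $\overline{L}'_i$ of $C^{\infty}$ class; since $\pmb{a}_n\cdot\overline{\pmb{L}}+\overline{\OO}(f_n)=\pmb{a}_n\cdot\overline{\pmb{L}}'+\overline{\OO}(f_n+\pmb{a}_n\cdot\pmb{\phi})$ with $(f_n+\pmb{a}_n\cdot\pmb{\phi})/n$ converging uniformly and $\pmb{a}\cdot\overline{\pmb{L}}'+\overline{\OO}(f+\pmb{a}\cdot\pmb{\phi})=\pmb{a}\cdot\overline{\pmb{L}}+\overline{\OO}(f)$, one may assume each $\overline{L}_i$ is $C^{\infty}$. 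For a generic resolution $\nu:X'\to X$ the maps $H^{0}(X,M)\hookrightarrow H^{0}(X',\nu^{*}M)$ preserve $\Vert\cdot\Vert_{\sup}$ and have cokernel supported in dimension $\le d-1$, so $\ah(\nu^{*}\overline{M})=\ah(\overline{M})+o(n^{d})$ for $\overline{M}$ of size $O(n)$; with Proposition~\ref{cor:thm:cont:extension:aPic}(2) this reduces to $X$ generically smooth.

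Next I would remove the continuous function. Given $\epsilon>0$, choose $g\in C^{\infty}(X)$ with $\Vert f-g\Vert_{\sup}\le\epsilon$. For large $n$ the sup-norms attached to $\overline{\OO}(f_n)$ and to $n\overline{\OO}(g)$ differ by a factor $e^{\pm2\epsilon n}$, and $\rank H^{0}(\pmb{a}_n\cdot\pmb{L})=O(n^{d-1})$, so the norm-rescaling estimate \cite[Proposition~2.1]{MoCont} gives $|\ah(\pmb{a}_n\cdot\overline{\pmb{L}}+\overline{\OO}(f_n))-\ah(\pmb{a}_n\cdot\overline{\pmb{L}}+n\overline{\OO}(g))|\le C\epsilon n^{d}+o(n^{d})$ with $C$ independent of $\epsilon$. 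Since $\pmb{a}_n\cdot\overline{\pmb{L}}+n\overline{\OO}(g)=\pmb{a}'_n\cdot\overline{\pmb{L}}'$ for $\overline{\pmb{L}}'=(\overline{L}_1,\dots,\overline{L}_l,\overline{\OO}(g))$ and $\pmb{a}'_n=(\pmb{a}_n,n)$ with $\pmb{a}'_n/n\to(\pmb{a},1)$, and since $\avol(\pmb{a}\cdot\overline{\pmb{L}}+\overline{\OO}(g))\to\avol(\pmb{a}\cdot\overline{\pmb{L}}+\overline{\OO}(f))$ as $\epsilon\to0$ by Proposition~\ref{cor:thm:cont:extension:aPic}(1), it suffices to treat the case $f_n\equiv0$: for $X$ generically smooth, a finite sequence $\overline{\pmb{M}}$ of $C^{\infty}$-hermitian invertible sheaves and $\pmb{c}_n\in\ZZ^{s}$ with $\pmb{c}_n/n\to\pmb{c}$, show $\ah(\pmb{c}_n\cdot\overline{\pmb{M}})/(n^{d}/d!)\to\avol(\pmb{c}\cdot\overline{\pmb{M}})$.

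For this, fix an ample $C^{\infty}$-hermitian invertible sheaf $\overline{H}$ with $\overline{H}\pm\overline{M}_i$ effective for all $i$. If $\pmb{c}=0$ then $|\pmb{c}_n|_1=o(n)$, hence $\pmb{c}_n\cdot\overline{\pmb{M}}\le|\pmb{c}_n|_1\overline{H}$ and $\ah(\pmb{c}_n\cdot\overline{\pmb{M}})\le\ah(|\pmb{c}_n|_1\overline{H})=O(|\pmb{c}_n|_1^{d})=o(n^{d})$, matching $\avol(0)=0$. Assume $\pmb{c}\ne0$. Fix small $\epsilon>0$ and rational $\pmb{q}^{-}<\pmb{c}<\pmb{q}^{+}$ (componentwise) with $|\pmb{q}^{\pm}-\pmb{c}|_1<\epsilon$ and common denominator $m$. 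For large $n$, $\lfloor n\pmb{q}^{-}\rfloor\le\pmb{c}_n\le\lceil n\pmb{q}^{+}\rceil$; with $\pmb{b}^{+}_n=\lceil n\pmb{q}^{+}\rceil$, $\pmb{b}^{-}_n=\lfloor n\pmb{q}^{-}\rfloor$ one has $|\pmb{b}^{\pm}_n-\pmb{c}_n|_1=(\epsilon+o(1))n$ and $|\pmb{b}^{\pm}_n|_1=\Theta(n)$. Effectivity of $\overline{H}\pm\overline{M}_i$ gives $\pmb{b}^{-}_n\cdot\overline{\pmb{M}}-|\pmb{b}^{-}_n-\pmb{c}_n|_1\overline{H}\le\pmb{c}_n\cdot\overline{\pmb{M}}\le\pmb{b}^{+}_n\cdot\overline{\pmb{M}}+|\pmb{b}^{+}_n-\pmb{c}_n|_1\overline{H}$, hence the same for $\ah$ by monotonicity; Theorem~\ref{thm:h:0:estimate:big:main}, applied with the sequence $\overline{\pmb{M}}$, the sheaf $\overline{H}$, $b=|\pmb{b}^{\pm}_n-\pmb{c}_n|_1$ and $c=0$ (resp. $c=b$), removes the $\overline{H}$-terms at cost $O(\epsilon n^{d})+o(n^{d})$, the $O$-constant depending only on $X,\overline{\pmb{M}},\overline{H}$. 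Writing $n=km+t$ ($0\le t<m$) one has $\pmb{b}^{\pm}_n=k\pmb{p}^{\pm}+\pmb{\rho}^{\pm}_n$ with $\pmb{p}^{\pm}=m\pmb{q}^{\pm}\in\ZZ^{s}$ and $|\pmb{\rho}^{\pm}_n|_1$ bounded, so $\pmb{b}^{\pm}_n\cdot\overline{\pmb{M}}$ lies within $\pm C_0\overline{H}$ of the fixed sheaf $k\overline{P}^{\pm}$, $\overline{P}^{\pm}=\pmb{p}^{\pm}\cdot\overline{\pmb{M}}$; a further use of Theorem~\ref{thm:h:0:estimate:big:main} gives $\ah(\pmb{b}^{+}_n\cdot\overline{\pmb{M}})\le\ah(k\overline{P}^{+})+o(n^{d})$ and symmetrically $\ah(\pmb{b}^{-}_n\cdot\overline{\pmb{M}})\ge\ah(k\overline{P}^{-})-o(n^{d})$. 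By Chen's theorem $\ah(k\overline{P}^{\pm})/(k^{d}/d!)\to\avol(\overline{P}^{\pm})=m^{d}\avol(\pmb{q}^{\pm}\cdot\overline{\pmb{M}})$, and $n^{d}/d!\sim m^{d}k^{d}/d!$, so $\liminf$ and $\limsup$ of $\ah(\pmb{c}_n\cdot\overline{\pmb{M}})/(n^{d}/d!)$ are caught between $\avol(\pmb{q}^{-}\cdot\overline{\pmb{M}})-C'\epsilon$ and $\avol(\pmb{q}^{+}\cdot\overline{\pmb{M}})+C'\epsilon$; letting $\epsilon\to0$ and using weak continuity of $\avol$ (Theorem~\ref{thm:cont:extension:aPic}, or Theorem~\ref{thm:C:infty:cont}) yields the desired limit.

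The main obstacle is the error bookkeeping across these iterated applications of Theorem~\ref{thm:h:0:estimate:big:main}: every error carrying a factor proportional to $n$—hence to $|\pmb{q}^{\pm}-\pmb{c}|_1\le\epsilon$—must be weighted only by constants attached to the \emph{fixed} data $X,\overline{\pmb{M}},\overline{H}$, whereas the $\epsilon$-dependent constants (coming from $\overline{P}^{\pm}$, whose size grows with $m$) may appear only in front of $o(n^{d})$ terms. This is precisely why $g$ is absorbed into the coefficient vector $\pmb{a}'_n=(\pmb{a}_n,n)$ rather than being treated as a perturbing sheaf in Theorem~\ref{thm:h:0:estimate:big:main} (whose constants degenerate as $\Vert g\Vert_{C^{2}}\to\infty$), and why $\overline{\OO}(f_n)$ is replaced by $n\overline{\OO}(g)$ through the norm-rescaling estimate of \cite[Proposition~2.1]{MoCont} instead. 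Checking that the admissibility constraints $|\pmb{a}|_1\ge b\ge c\ge0$ and $|\pmb{a}|_1\ge a_0$ of Theorem~\ref{thm:h:0:estimate:big:main} hold along each of the sequences involved—which is exactly what forces the separate treatment of the case $\pmb{c}=0$—is then routine.
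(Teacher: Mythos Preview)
Your overall strategy matches the paper's: reduce to $C^{\infty}$ metrics and generically smooth $X$, absorb the function $f$ into the coefficient vector, then sandwich the real target between rationals and invoke Chen's theorem together with the multi-indexed fundamental estimation, closing with the continuity of $\avol$. The organizational details differ slightly---the paper first reduces to \emph{effective} $\overline{\pmb{L}}$ and climbs through integer $\to$ rational $\to$ real coefficient vectors in its Step~1, whereas you pass directly to a rational sandwich using an auxiliary $\overline{H}$ with $\overline{H}\pm\overline{M}_i$ effective; and the paper keeps $\overline{\OO}(f)$ present in Step~1 rather than absorbing it as an extra coordinate---but these are equivalent rearrangements.

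There is, however, a genuine gap in your reduction to generically smooth $X$. The assertion that ``cokernel supported in dimension $\le d-1$'' implies $\ah(\nu^{*}\overline{M})=\ah(\overline{M})+o(n^{d})$ is not justified when $X$ is not normal. The cokernel of $H^{0}(X,M_n)\hookrightarrow H^{0}(X',\nu^{*}M_n)$ does sit inside $H^{0}(X,M_n\otimes(\nu_{*}\OO_{X'}/\OO_X))$, but $\ah$ of the cokernel under the \emph{quotient} norm is not controlled by its support or rank alone: the quotient norm may be very small, inflating $\ah$ beyond $o(n^{d})$. The paper separates this into two steps. For $X$ normal one has $\nu_{*}\OO_{X'}=\OO_X$ and hence literal equality of $H^{0}$'s (Step~3). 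For general $X$ one first passes to the normalization $\nu:X'\to X$ and uses a nonzero section $s\in H^{0}(X',H\otimes\mathcal{I}_{X'/X})$ of an ample $H$ twisted by the conductor ideal: multiplication by $s$ sends $H^{0}(X',\nu^{*}M_n-H)$ into $H^{0}(X,M_n)$, giving $\ah(\nu^{*}\overline{M}_n-\overline{H})\le\ah(\overline{M}_n)$, and then the already-established normal case applied to the enlarged sequence $(\nu^{*}\overline{L}_1,\ldots,\nu^{*}\overline{L}_l,\overline{H})$ with coefficients $(\pmb{a}_n,-1)\to(\pmb{a},0)$ shows the left side has the same limit as $\ah(\nu^{*}\overline{M}_n)$ (Step~4). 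Your reduction needs this conductor argument, or something equivalent, to go through.
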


\begin{proof}
First of all, let us see the following claim:

\begin{Claim}
\label{claim:thm:vol:limit:arith:var:smooth:3}
We may assume that $\overline{\pmb{L}}$ is effective, that is,
$\overline{L}_i$ is effective for every $i$.
\end{Claim}

\begin{proof}
We can find $\overline{L}'_i \geq 0$ and $\overline{L}''_i \geq 0$ with
$\overline{L}_i = \overline{L}'_i - \overline{L}''_i$.
We set $\pmb{a}' $, $\pmb{a}'_n$ and $\overline{\pmb{L}}'$ as follows:
\[
\begin{cases}
\pmb{a}' = (\pmb{a}(1), \ldots, \pmb{a}(l), -\pmb{a}(1), \ldots, -\pmb{a}(l)), \\
\pmb{a}'_n = (\pmb{a}_n(1), \ldots, \pmb{a}_n(l), -\pmb{a}_n(1), \ldots, -\pmb{a}_n(l)), \\
\overline{\pmb{L}}' = (\overline{L}'_1, \ldots, \overline{L}'_l, \overline{L}''_1, \ldots, \overline{L}''_l).
\end{cases}
\]
Then $\pmb{a} \cdot \overline{\pmb{L}} = \pmb{a}' \cdot \overline{\pmb{L}}'$,
$\pmb{a}_n \cdot \overline{\pmb{L}}  = \pmb{a}'_n \cdot \overline{\pmb{L}}'$ and
$\lim_{n\to\infty} \pmb{a}'_n/n = \pmb{a}'$.
Thus the claim follows.
\end{proof}

Under the assumption  that $\overline{\pmb{L}}$ is effective,
we will prove this theorem in the following steps:

\begin{enumerate}[{Step} 1.]
\item
If $X$ is generically smooth, $f$ is $C^{\infty}$ and
$\overline{\pmb{L}} = (\overline{L}_1, \ldots, \overline{L}_l)$ is a finite sequence of $C^{\infty}$-hermitian
invertible sheaves on $X$, then the assertion of Theorem~\rom{\ref{thm:vol:limit:arith:var:L}} holds.

\item
If $X$ is generically smooth, then the assertion of Theorem~\rom{\ref{thm:vol:limit:arith:var:L}} holds.

\item
If $X$ is normal, then the assertion of Theorem~\rom{\ref{thm:vol:limit:arith:var:L}} holds.

\item
In general,
Theorem~\rom{\ref{thm:vol:limit:arith:var:L}} holds.
\end{enumerate}

\bigskip
{\bf Step 1}:
Let us begin with the following claim:

\begin{Claim}
\label{claim:thm:vol:limit:arith:var:smooth:2}
Let $\overline{L}$ and $\overline{A}$ be $C^{\infty}$-hermitian invertible sheaves on $X$.
Then there are  positive constants $C$, $D$ and $n_1$ depending only on $\overline{L}$,
$\overline{A}$ and $X$ such that
\[
\begin{cases}
 \ah\left(n\overline{L}+ \lceil n\epsilon \rceil \overline{A}\right)
 \leq  \ah\left(n\overline{L}\right) + C\lceil n\epsilon\rceil n^{d-1} + D n^{d-1} \log(n) \\
\ah\left(n\overline{L} \right) \leq
 \ah\left(n\overline{L}- \lceil n\epsilon \rceil \overline{A}\right)
 + C\lceil n\epsilon\rceil n^{d-1} + D n^{d-1} \log(n)
\end{cases}
\]
for all $n \in \ZZ$ and $\epsilon \in \RR$ with $n \geq n_1$ and $0 \leq \epsilon \leq 1/2$.
\end{Claim}

\begin{proof}
Note that if  $n \geq 2$ and $0 \leq \epsilon \leq 1/2$, 
then $n \geq (n/2) + 1 \geq \lceil n/2 \rceil\geq \lceil n\epsilon\rceil$.
Thus the claim follows from Theorem~\ref{thm:h:0:estimate:big:main} or
\cite[Theorem~3.1]{MoCont}.
\end{proof}

First we consider the case where $\pmb{a} \in \ZZ^l$.
In this case, by \cite{Chen},
\[
\lim_{n\to\infty} \frac{\ah(n (\pmb{a} \cdot \overline{\pmb{L}} + \overline{\OO}(f)))}{n^d/d!}
=\avol(\pmb{a} \cdot \overline{\pmb{L}} + \overline{\OO}(f)).
\]
For any $0 < \epsilon <1/2$,
there is a positive integer $n_0$ such that 
\[
\vert \pmb{a}_n - n\pmb{a} \vert_{1} \leq n \epsilon\quad\text{and}\quad
\Vert f_n - nf \Vert_{\sup} \leq n \epsilon
\]
for all
$n \geq n_0$. Thus if we set  $\pmb{1} = (1, \ldots, 1)$, then
\[
n \pmb{a} - \lceil n \epsilon \rceil \pmb{1} \leq
\pmb{a}_n \leq n \pmb{a} + \lceil n \epsilon \rceil \pmb{1}
\quad\text{and}\quad
n f - \lceil n \epsilon \rceil \leq f_n \leq n f + \lceil n \epsilon \rceil,
\]
where the first inequality means that
$n \pmb{a}(i) - \lceil n \epsilon \rceil \leq
\pmb{a}_n(i) \leq n \pmb{a}(i) + \lceil n \epsilon \rceil$ for all $i$.
Therefore,
\begin{multline*}
\ah\left(n (\pmb{a} \cdot \overline{\pmb{L}} + \overline{\OO}(f)) - \lceil n \epsilon \rceil ( \pmb{1} \cdot \overline{\pmb{L}} + \overline{\OO}(1))\right) 
\leq
\ah\left(\pmb{a}_n \cdot \pmb{L} + \overline{\OO}(f_n)\right) \\
\leq 
\ah\left(n (\pmb{a} \cdot \overline{\pmb{L}} + \overline{\OO}(f)) + \lceil n \epsilon \rceil ( \pmb{1} \cdot \overline{\pmb{L}} + \overline{\OO}(1))\right).
\end{multline*}
Thus, by Claim~\ref{claim:thm:vol:limit:arith:var:smooth:2}, 
there are constant $C$ and $D$ depending only on
$\pmb{a} \cdot \overline{\pmb{L}} + \overline{\OO}(f)$ and $\pmb{1} \cdot \overline{\pmb{L}} + \overline{\OO}(1)$
such that
\begin{multline*}
\ah(n (\pmb{a} \cdot \overline{\pmb{L}} + \overline{\OO}(f))) 
- C\lceil n \epsilon \rceil  n^{d-1} - D n^{d-1}\log(n) 
\leq 
\ah (\pmb{a}_n \cdot \overline{\pmb{L}} + \overline{\OO}(f_n))
\leq \\
\ah(n (\pmb{a} \cdot \overline{\pmb{L}} + \overline{\OO}(f)))
+ C\lceil n \epsilon \rceil  n^{d-1} + D n^{d-1}\log(n).
\end{multline*}
for all $n \gg 1$.
Thus, taking $n \to \infty$, we obtain the following:
\begin{multline*}
\avol(\pmb{a} \cdot \overline{\pmb{L}} + \overline{\OO}(f))  - C d! \epsilon  \leq
 \liminf_{n\to\infty}\frac{\ah(\pmb{a}_n \cdot \overline{\pmb{L}} + \overline{\OO}(f_n))}{n^d/d!} \\
 \leq
 \limsup_{n\to\infty}\frac{\ah(\pmb{a}_n \cdot \overline{\pmb{L}} + \overline{\OO}(f_n))}{n^d/d!} \leq
 \avol(\pmb{a} \cdot \overline{\pmb{L}} + \overline{\OO}(f))  + C d! \epsilon.
\end{multline*}
Here $\epsilon$ is arbitrary. Thus
\[
 \liminf_{n\to\infty}\frac{\ah(\pmb{a}_n \cdot \overline{\pmb{L}} + \overline{\OO}(f_n))}{n^d/d!} = \limsup_{n\to\infty}\frac{\ah(\pmb{a}_n \cdot \overline{\pmb{L}} + \overline{\OO}(f_n))}{n^d/d!} = \avol(\pmb{a} \cdot \overline{\pmb{L}})
 \]
which shows the case where $\pmb{a} \in \ZZ^l$.

Next we consider the case where $\pmb{a} \in \QQ^l$.
Let $N$ be a positive integer with $N \cdot \pmb{a} \in \ZZ^l$.
Since $\lim\limits_{n\to\infty} \pmb{a}_{Nn+k}/n = N \pmb{a}$ and
$\lim_{n\to\infty}\Vert f_{Nn+k}/n - Nf \Vert_{\sup} = 0$ for $0 \leq k < N$,
by using the previous case, we have
\[
\lim_{n\to\infty} \frac{\ah(\pmb{a}_{Nn+k}\cdot \overline{\pmb{L}} + \overline{\OO}(f_{Nn+k}))}{n^d/d!} =
\avol(N\pmb{a}\cdot\overline{\pmb{L}} + \overline{\OO}(Nf)) = N^d \avol(\pmb{a}\cdot\overline{\pmb{L}}+\overline{\OO}(f)).
\]
On the other hand,
\begin{align*}
\lim_{n\to\infty} \frac{\ah(\pmb{a}_{Nn+k}\cdot \overline{\pmb{L}} + \overline{\OO}(f_{Nn+k}))}{n^d/d!} & =
\lim_{n\to\infty} \frac{(Nn+k)^d}{n^d} \frac{\ah(\pmb{a}_{Nn+k}\cdot \overline{\pmb{L}} + \overline{\OO}(f_{Nn+k}))}{(Nn+k)^d/d!} \\
& = N^d \lim_{n\to\infty} \frac{\ah(\pmb{a}_{Nn+k}\cdot \overline{\pmb{L}} + \overline{\OO}(f_{Nn+k}))}{(Nn+k)^d/d!}.
\end{align*}
Thus we get
\[
\lim_{n\to\infty} \frac{\ah(\pmb{a}_{Nn+k}\cdot \overline{\pmb{L}} + \overline{\OO}(f_{Nn+k}))}{(Nn+k)^d/d!} =
\avol(\pmb{a}\cdot\overline{\pmb{L}} + \overline{\OO}(f))
\]
for all $k$ with $0 \leq k < N$,
which proves the case where $\pmb{a} \in \QQ^l$.

Finally we consider a general case.
For $\epsilon > 0$,
let us choose $\pmb{\delta} = (\delta_1,\ldots,\delta_l),
\pmb{\delta}' =  (\delta'_1,\ldots,\delta'_l) \in \RR^l_{\geq 0}$
such that $\pmb{a} + \pmb{\delta},
\pmb{a} - \pmb{\delta}' \in \QQ^l$ and $\vert \pmb{\delta} \vert_{1},
\vert \pmb{\delta}' \vert_{1} \leq \epsilon$.
If we set 
\[
\pmb{b}_n = \pmb{a}_n + ([n\delta_1], \ldots, [n\delta_l])\quad
\text{and}\quad
\pmb{b}'_n = \pmb{a}_n - ([n\delta'_1], \ldots, [n\delta'_l]),
\]
then $\lim\limits_{n\to\infty}\pmb{b}_n/n = \pmb{a} + \pmb{\delta}$
and $\lim\limits_{n\to\infty}\pmb{b}'_n/n = \pmb{a} - \pmb{\delta}'$.
Thus, using the previous case, we have
\begin{multline*}
\avol((\pmb{a} - \pmb{\delta}') \cdot \overline{\pmb{L}} + \overline{\OO}(f)) = \liminf_{n\to\infty}\frac{\ah(\pmb{b}'_n \cdot \overline{\pmb{L}} + \overline{\OO}(f_n))}{n^d/d!} \\
\leq
\liminf_{n\to\infty}\frac{\ah(\pmb{a}_n \cdot \overline{\pmb{L}}+ \overline{\OO}(f_n))}{n^d/d!}
\leq
\limsup_{n\to\infty}\frac{\ah(\pmb{a}_n \cdot \overline{\pmb{L}} + \overline{\OO}(f_n))}{n^d/d!} \\
\leq
\limsup_{n\to\infty}\frac{\ah(\pmb{b}_n \cdot \overline{\pmb{L}}+ \overline{\OO}(f_n))}{n^d/d!} = 
\avol((\pmb{a} + \pmb{\delta}) \cdot \overline{\pmb{L}} + \overline{\OO}(f)).
\end{multline*}
By (6) of Proposition~\ref{cor:thm:cont:extension:aPic},
\[
\avol((\pmb{a} - \epsilon \pmb{1}) \cdot \overline{\pmb{L}}+ \overline{\OO}(f)) \leq \avol((\pmb{a} - \pmb{\delta}') \cdot \overline{\pmb{L}}+ \overline{\OO}(f))
\]
and
\[
\avol((\pmb{a} + \pmb{\delta}) \cdot \overline{\pmb{L}}+ \overline{\OO}(f)) \leq
\avol((\pmb{a} + \epsilon \pmb{1}) \cdot \overline{\pmb{L}}+ \overline{\OO}(f)).
\]
Therefore, 
\begin{multline*}
\avol((\pmb{a} - \epsilon \pmb{1}) \cdot \overline{\pmb{L}}+ \overline{\OO}(f)) \leq 
\liminf_{n\to\infty}\frac{\ah(\pmb{a}_n \cdot \overline{\pmb{L}}+ \overline{\OO}(f_n))}{n^d/d!} \\
\leq
\limsup_{n\to\infty}\frac{\ah(\pmb{a}_n \cdot \overline{\pmb{L}}+ \overline{\OO}(f_n))}{n^d/d!}
\leq
\avol((\pmb{a} + \epsilon \pmb{1}) \cdot \overline{\pmb{L}}+ \overline{\OO}(f)).
\end{multline*}
Thus,
taking $\epsilon \to 0$ and
using the continuity of the volume function,
we have
\[
\avol(\pmb{a} \cdot \overline{\pmb{L}}+ \overline{\OO}(f)) = \liminf_{n\to\infty}\frac{\ah(\pmb{a}_n \cdot \overline{\pmb{L}}+ \overline{\OO}(f_n))}{n^d/d!}
=\limsup_{n\to\infty}\frac{\ah(\pmb{a}_n \cdot \overline{\pmb{L}}+ \overline{\OO}(f_n))}{n^d/d!}.
\]
Hence we get Step~1.

\bigskip
{\bf Step 2}:
It is sufficient to show the following inequality:
\addtocounter{Claim}{1}
\begin{multline}
\label{eqn:thm:vol:limit:arith:var:step:2:1}
\avol(\pmb{a} \cdot \overline{\pmb{L}} + \overline{\OO}(f)) - 2d \epsilon (\vert \pmb{a} \vert_{1}   + 1) \vol((\pmb{a} + \pmb{1})\cdot \pmb{L}_{\QQ}) \\
\leq
\liminf_{n\to\infty} \frac{\ah(\pmb{a}_n \cdot \overline{\pmb{L}} + \overline{\OO}(f_n))}{n^d/d!}
\leq
\limsup_{n\to\infty} \frac{\ah(\pmb{a}_n \cdot \overline{\pmb{L}} + \overline{\OO}(f_n))}{n^d/d!} \\
\leq
\avol(\pmb{a} \cdot \overline{\pmb{L}} + \overline{\OO}(f)) + 2d \epsilon (\vert \pmb{a} \vert_{1}   + 1) \vol((\pmb{a} + \pmb{1})\cdot \pmb{L}_{\QQ})
\end{multline}
for any positive real number $\epsilon$.
By Lemma~\ref{lem:Stone:Weierstrass},
there are $g_1, \ldots, g_l, h \in C^0(X)$ such that
$\Vert g_i \Vert_{\sup} \leq \epsilon$ ($i=1, \ldots, l$), $\Vert h \Vert_{\sup} \leq \epsilon$, 
$f + h$ is $C^{\infty}$ and that
\[
\overline{\pmb{L}}^{\pmb{g}}= (\overline{L}_1 + \overline{\OO}(g_1), \ldots, 
\overline{L}_l+ \overline{\OO}(g_l))
\]
is $C^{\infty}$. Then it is easy to see that
\begin{multline*}
\pmb{a}_n \cdot \overline{\pmb{L}} + \overline{\OO}(f_n) +
\overline{\OO}(-\epsilon(\vert \pmb{a}_n \vert_{1}  + n))
\leq
\pmb{a}_n \cdot \overline{\pmb{L}}^{\pmb{g}} + \overline{\OO}(f_n + nh) \\
\leq
\pmb{a}_n \cdot \overline{\pmb{L}} +  \overline{\OO}(f_n) +
\overline{\OO}(\epsilon(\vert \pmb{a}_n \vert_{1}  + n)),
\end{multline*}
which implies that
\begin{multline*}
\ah\left(
\pmb{a}_n \cdot \overline{\pmb{L}} + \overline{\OO}(f_n) +
\overline{\OO}(-\epsilon(\vert \pmb{a}_n \vert_{1}  + n))
\right)
\leq
\ah\left(\pmb{a}_n \cdot \overline{\pmb{L}}^{\pmb{g}} + \overline{\OO}(f_n + nh)\right) \\
\leq
\ah\left(\pmb{a}_n \cdot \overline{\pmb{L}} +  \overline{\OO}(f_n) +
\overline{\OO}( \epsilon(\vert \pmb{a}_n \vert_{1}  + n))\right).
\end{multline*}
For each $i$, we choose an integer $b_i$ with $\pmb{a}(i) < b_i \leq \pmb{a}(i)+1$.
Then there is a positive integer $n_0$ such that
$\pmb{a}_n(i) \leq n b_i$ for all $n \geq n_0$ and $i$. Thus, if we set
$\pmb{b} = (b_1, \ldots, b_l)$, then
$\pmb{a}_n \leq n \pmb{b}$ for all $n \geq n_0$ and
$\pmb{b} \leq \pmb{a} + \pmb{1}$ .
Thus $h^0(\pmb{a}_n \cdot \pmb{L}_{\QQ}) \leq h^0(n\pmb{b} \cdot \pmb{L}_{\QQ})$
for $n \geq n_0$. 
Hence, by using \cite[(3) of Proposition~2.1]{MoCont},
if we set 
\[
\beta(n) =  \epsilon(\vert \pmb{a}_n \vert_{1}  + n)
h^0(n \pmb{b} \cdot \pmb{L}_{\QQ}) + C_1 n^{d-1}\log(n)
\]
for some positive constant $C_1$, then
\[
\begin{cases}
\ah\left(\pmb{a}_n \cdot \overline{\pmb{L}} +  \overline{\OO}(f_n) +
\overline{\OO}( \epsilon(\vert \pmb{a}_n \vert_{1}  + n))\right) \leq
\ah\left(\pmb{a}_n \cdot \overline{\pmb{L}} +  \overline{\OO}(f_n) \right) + \beta(n),\\
\ah\left(\pmb{a}_n \cdot \overline{\pmb{L}} +  \overline{\OO}(f_n) +
\overline{\OO}( -\epsilon(\vert \pmb{a}_n \vert_{1}  + n))\right) \geq
\ah\left(\pmb{a}_n \cdot \overline{\pmb{L}} +  \overline{\OO}(f_n) \right) - \beta(n)
\end{cases}
\]
for $n \gg 1$. Thus,
\[
- \beta(n)
\leq
\ah\left(\pmb{a}_n \cdot \overline{\pmb{L}} + \overline{\OO}(f_n)\right)
- \ah\left(\pmb{a}_n \cdot \overline{\pmb{L}}^{\pmb{g}} + \overline{\OO}(f_n + nh)\right) 
\leq \beta(n)
\]
for $n \gg 1$.
Therefore, since
\[
\begin{cases}
{\displaystyle \lim_{n\to\infty} \frac{ \ah\left(\pmb{a}_n \cdot \overline{\pmb{L}}^{\pmb{g}} + \overline{\OO}(f_n + nh)\right)}{n^d/d!} = \avol(\pmb{a} \cdot \overline{\pmb{L}}^{\pmb{g}} + \overline{\OO}(f+h))} \quad
(\because \text{Step~1}), \\
{\displaystyle \lim_{n\to\infty} \frac{ \beta(n) }{n^{d}/d!} =
d \epsilon(\vert \pmb{a} \vert_{1}  + 1)
\vol(\pmb{b} \cdot \pmb{L}_{\QQ})}, \\
\vol(\pmb{b} \cdot \pmb{L}_{\QQ}) \leq \vol((\pmb{a} + \pmb{1}) \cdot \pmb{L}_{\QQ}),
\end{cases}
\]
we have
\begin{multline*}
\avol(\pmb{a} \cdot \overline{\pmb{L}}^{\pmb{g}} + \overline{\OO}(f+h)) - d \epsilon (\vert \pmb{a} \vert_{1}   + 1) \vol((\pmb{a} + \pmb{1})\cdot \pmb{L}_{\QQ}) \\
\leq
\liminf_{n\to\infty} \frac{\ah(\pmb{a}_n \cdot \overline{\pmb{L}} + \overline{\OO}(f_n))}{n^d/d!}
\leq
\limsup_{n\to\infty} \frac{\ah(\pmb{a}_n \cdot \overline{\pmb{L}} + \overline{\OO}(f_n))}{n^d/d!} \\
\leq
\avol(\pmb{a} \cdot \overline{\pmb{L}}^{\pmb{g}} + \overline{\OO}(f+h)) + d \epsilon (\vert \pmb{a} \vert_{1}   + 1) \vol((\pmb{a} + \pmb{1})\cdot \pmb{L}_{\QQ}).
\end{multline*}
On the other hand,
by (1) of Proposition~\ref{cor:thm:cont:extension:aPic},
\begin{multline*}
\left\vert \avol(\pmb{a} \cdot \overline{\pmb{L}}^{\pmb{g}} + \overline{\OO}(f + h)) - \avol(\pmb{a} \cdot \overline{\pmb{L}} + \overline{\OO}(f)) \right\vert \\
=
\left\vert \avol((\pmb{a},1) \cdot (\overline{\pmb{L}},\overline{\OO}(f))  + \overline{\OO}(\pmb{a} \cdot \pmb{g} + h)) - \avol((\pmb{a},1) \cdot (\overline{\pmb{L}}, \overline{\OO}(f))) \right\vert \\
\leq d \epsilon (\vert \pmb{a} \vert_1  + 1) \vol((\pmb{a}, 1)\cdot (\pmb{L}_{\QQ}, 0))
\leq  d \epsilon (\vert \pmb{a} \vert_1  + 1) \vol((\pmb{a} + \pmb{1})\cdot \pmb{L}_{\QQ}).
\end{multline*}
Hence \eqref{eqn:thm:vol:limit:arith:var:step:2:1} follows.

\bigskip
{\bf Step 3}:
Let $\nu : X' \to X$ be a generic resolution of singularities of $X$ such that
$X'$ is normal.
Then, since $\nu_* \OO_{X'} = \OO_X$, we have
\[
H^0(X, \pmb{a}_n \cdot \pmb{L}) = H^0(X', \pmb{a}_n \cdot \nu^*(\pmb{L})).
\]
Thus
$\ah(\pmb{a}_n \cdot \overline{\pmb{L}} + \overline{\OO}(f_n)) = \ah(\pmb{a}_n \cdot \nu^*(\overline{\pmb{L}})+\overline{\OO}(\nu^*(f_n)))$.
Therefore, by using Step~2 and (2) of
Proposition~\ref{cor:thm:cont:extension:aPic},
\begin{align*}
\lim_{n\to\infty} \frac{\ah(\pmb{a}_n \cdot \overline{\pmb{L}}+\overline{\OO}(f_n))}{n^d/d!} & =
\lim_{n\to\infty} \frac{\ah(\pmb{a}_n \cdot \nu^*(\overline{\pmb{L}})+\overline{\OO}(\nu^*(f_n)))}{n^d/d!} \\
& =
\avol(\pmb{a} \cdot \nu^*(\overline{\pmb{L}}) + \overline{\OO}(\nu^*(f))) = \avol(\pmb{a} \cdot \overline{\pmb{L}}+\overline{\OO}(f)).
\end{align*}

\bigskip
{\bf Step 4}:
Let $\nu : X' \to X$ be the normalization of $X$.
It is sufficient to see that
\[
\lim_{n\to\infty} \frac{\ah(\pmb{a}_n \cdot \overline{\pmb{L}}+\overline{\OO}(f_n))}{n^d/d!} =
\lim_{n\to\infty} \frac{\ah(\pmb{a}_n \cdot \nu^*(\overline{\pmb{L}})+\overline{\OO}(\nu^*(f_n)))}{n^d/d!}
\]
because, by using (2) of Proposition~\ref{cor:thm:cont:extension:aPic} and Step 3,
the above equation implies that
\begin{align*}
\lim_{n\to\infty} \frac{\ah(\pmb{a}_n \cdot \overline{\pmb{L}}+\overline{\OO}(f_n))}{n^d/d!} & =
\lim_{n\to\infty} \frac{\ah(\pmb{a}_n \cdot \nu^*(\overline{\pmb{L}})+\overline{\OO}(\nu^*(f_n)))}{n^d/d!} \\
& = \avol(\pmb{a} \cdot \nu^*(\overline{\pmb{L}}) + \overline{\OO}(\nu^*(f))) = \avol(\pmb{a} \cdot \overline{\pmb{L}}+\overline{\OO}(f)).
\end{align*}

Since $H^0(X, \pmb{a}_n \cdot \pmb{L}) \subseteq
H^0(X', \pmb{a}_n \cdot \nu^*(\pmb{L}))$,
we have 
\[
\ah(\pmb{a}_n \cdot \overline{\pmb{L}}+\overline{\OO}(f_n)) \leq
\ah(\pmb{a}_n \cdot \nu^*(\overline{\pmb{L}})+\overline{\OO}(\nu^*(f_n))).
\]
Thus
\begin{align*}
\liminf_{n\to\infty} \frac{\ah(\pmb{a}_n \cdot \overline{\pmb{L}}+\overline{\OO}(f_n))}{n^d/d!} & \leq
\limsup_{n\to\infty} \frac{\ah(\pmb{a}_n \cdot \overline{\pmb{L}}+\overline{\OO}(f_n))}{n^d/d!} \\
& \leq
\lim_{n\to\infty} \frac{\ah(\pmb{a}_n \cdot \nu^*(\overline{\pmb{L}})+\overline{\OO}(\nu^*(f_n)))}{n^d/d!}.
\end{align*}
Therefore, we need to show
\[
\lim_{n\to\infty} \frac{\ah(\pmb{a}_n \cdot \nu^*(\overline{\pmb{L}})+\overline{\OO}(\nu^*(f_n)))}{n^d/d!} \leq
\liminf_{n\to\infty} \frac{\ah(\pmb{a}_n \cdot \overline{\pmb{L}}+\overline{\OO}(f_n))}{n^d/d!}.
\]
The proof of the above inequality is similar to one of \cite[Theorem~4.3]{MoCont}.
Let $\mathcal{I}_{X'/X}$ be the conductor ideal sheaf of $X' \to X$.
Let $H$ be an ample invertible sheaf on $X'$ with
$H^0(X', H \otimes \mathcal{I}_{X'/X}) \not= 0$.
Let $s$ be a non-zero element of $H^0(X', H \otimes \mathcal{I}_{X'/X})$.
Let us choose a $C^{\infty}$-hermitian norm $\vert\cdot\vert$ of $H$
with $\Vert s \Vert_{\sup} \leq 1$.
We set $\overline{H} = (H, \vert\cdot\vert)$.

\begin{Claim}
${\displaystyle \lim_{n\to\infty} \frac{\ah(\pmb{a}_n \cdot \nu^*(\overline{\pmb{L}})+\overline{\OO}(\nu^*(f_n)))}{n^d/d!} =
\lim_{n\to\infty} \frac{\ah(\pmb{a}_n \cdot \nu^*(\overline{\pmb{L}}) - \overline{H} + \overline{\OO}(\nu^*(f_n)))}{n^d/d!}}$.
\end{Claim}

\begin{proof}
We set 
\[
\begin{cases}
\overline{\pmb{L}}' = (\nu^*(\overline{L}_1), \ldots, \nu^*(\overline{L}_l), \overline{H}), \\
\pmb{a}'_n = (\pmb{a}_n(1), \ldots, \pmb{a}_n(l), -1), \\
\pmb{a}' = (\pmb{a}(1), \ldots, \pmb{a}(l), 0).
\end{cases}
\]
Then $\pmb{a}_n \cdot \nu^*(\overline{\pmb{L}}) - \overline{H} = \pmb{a}'_n \cdot
\overline{\pmb{L}}'$ and
$\pmb{a}' = \lim\limits_{n\to\infty} \pmb{a}'_n/n$.
By Step~3,
\begin{align*}
\lim_{n\to\infty} \frac{\ah(\pmb{a}_n \cdot \nu^*(\overline{\pmb{L}}) - \overline{H}+\overline{\OO}(\nu^*(f_n)))}{n^d/d!} &=
\lim_{n\to\infty} \frac{\ah(\pmb{a}'_n \cdot \overline{\pmb{L}}' +\overline{\OO}(\nu^*(f_n)))}{n^d/d!} \\
& =
\avol(\pmb{a}' \cdot \overline{\pmb{L}}'+\overline{\OO}(\nu^*(f))) \\
& = 
\avol(\pmb{a} \cdot \nu^*(\overline{\pmb{L}}) + \overline{\OO}(\nu^*(f))) \\
&=
\lim_{n\to\infty} \frac{\ah(\pmb{a}_n \cdot \nu^*(\overline{\pmb{L}})+\overline{\OO}(\nu^*(f_n)))}{n^d/d!}.
\end{align*}
\end{proof}
In the same way as in the proof of \cite[Theorem~4.3]{MoCont}, we can see
\[
\Image\left( H^0(X', \pmb{a}_n \cdot \nu^*(\pmb{L}) - H) \overset{s}{\longrightarrow} 
H^0(X', \pmb{a}_n \cdot \nu^*(\pmb{L}))\right)
\subseteq
H^0(X, \pmb{a}_n \cdot \pmb{L}).
\]
Thus
$\ah(\pmb{a}_n \cdot \nu^*(\overline{\pmb{L}}) - \overline{H} +\overline{\OO}(\nu^*(f_n)))
\leq \ah(\pmb{a}_n \cdot \overline{\pmb{L}}+\overline{\OO}(f_n))$.
Therefore, using the above claim,
\begin{align*}
\lim_{n\to\infty} \frac{\ah(\pmb{a}_n \cdot \nu^*(\overline{\pmb{L}})+\overline{\OO}(\nu^*(f_n)))}{n^d/d!} & =
\lim_{n\to\infty} \frac{\ah(\pmb{a}_n \cdot \nu^*(\overline{\pmb{L}}) - \overline{H}+\overline{\OO}(\nu^*(f_n)))}{n^d/d!} \\
& \leq 
\liminf_{n\to\infty} \frac{\ah(\pmb{a}_n \cdot \overline{\pmb{L}}+\overline{\OO}(f_n))}{n^d/d!}.
\end{align*}

\end{proof}


\begin{thebibliography}{99}

\bibitem{Bou}
T. Bouche,
Convergence de la metrique de Fubini-Study
d'un fibre lineaire positif,
Ann Inst. Fourier, 40 (1990), 117-130.

\bibitem{Chen}
H. Chen,
Positive degree and arithmetic bigness,
arXiv:0803.2583.

\bibitem{Laz}
R. Lazarsfeld,
{\it Positivity in Algebraic Geometry I, II},
Springer-Verlag (2004).

\bibitem{MoCont}
A. Moriwaki,
Continuity of volumes on arithmetic varieties,
to appear in J. of Algebraic geometry.

\bibitem{Tian}
G. Tian,
On a set of polarized K\"{a}hler metrics on algebraic manifolds,
J. Diff. Geometry, 32 (1990), 99-130.

\end{thebibliography}
\end{document}